\let\oldtocsection=\tocsection
 \let\oldtocsubsection=\tocsubsection
 \let\oldtocsubsubsection=\tocsubsubsection
\renewcommand{\tocsection}[2]{\vspace{0.5em}\hspace{0em}\oldtocsection{#1}{#2}}
\renewcommand{\tocsubsection}[2]{\vspace{0.5em}\hspace{1em}\oldtocsubsection{#1}{#2}}
\renewcommand{\tocsubsubsection}[2]{\vspace{0.5em}\hspace{2em}\oldtocsubsubsection{#1}{#2}}
\patchcmd{\@settitle}{center}{flushleft}{}{}
\patchcmd{\@settitle}{center}{flushleft}{}{}
\patchcmd{\@setauthors}{\centering}{\raggedright}{}{}
\patchcmd{\abstract}{3pc}{0pt}{}{} 
\renewcommand*\@maketitle{%
  \normalfont\normalsize
  \@adminfootnotes
  \@mkboth{\@nx\shortauthors}{\@nx\shorttitle}%
  \global\topskip42\p@\relax 
  \@settitle
  \ifx\@empty\authors \else \@setauthors \fi
  \ifx\@empty\@date \else {\vskip 1em \vtop{\centering\large\@date\@@par}}\fi
  \ifx\@empty\@dedicatory
  \else
    \baselineskip18\p@
    \vtop{\centering{\footnotesize\itshape\@dedicatory\@@par}%
      \global\dimen@i\prevdepth}\prevdepth\dimen@i
  \fi
  \@setabstract
  \normalsize
  \if@titlepage
    \newpage
  \else
    \dimen@34\p@ \advance\dimen@-\baselineskip
    \vskip\dimen@\relax
  \fi
} 
\renewcommand*\@adminfootnotes{%
  \let\@makefnmark\relax  \let\@thefnmark\relax
  \ifx\@empty\@subjclass\else \@footnotetext{\@setsubjclass}\fi
  \ifx\@empty\@keywords\else \@footnotetext{\@setkeywords}\fi
  \ifx\@empty\thankses\else \@footnotetext{%
    \def\par{\let\par\@par}\@setthanks}%
  \fi
}
\def\nline{\\ \noalign{\medskip}}
\newcounter{dummy}
\numberwithin{dummy}{section}
\newtheorem{Theorem}[dummy]{Theorem}
\newtheorem{Corollary}[dummy]{Corollary}
\newtheorem{Definition}[dummy]{Definition}
\newtheorem{Lemma}[dummy]{Lemma}
\newtheorem{Proposition}[dummy]{Proposition}
\numberwithin{equation}{section}
\newcommand\xqed[1]{\leavevmode\unskip\penalty9999 \hbox{}\nobreak\hfill\quad\hbox{#1}}
\begin{document}
\title[\fontsize{7}{9}\selectfont  ]{Well-Posedness and Polynomial energy decay rate of a transmission problem for Rayleigh beam model with heat conduction}
\author{Mohammad Akil$^{1,\ast}$, Mouhammad Ghader$^{2}$, Zayd Hajjej$^{3}$  and Mohamad Ali sammoury$^{2}$  \vspace{0.5cm}\\
$^1$Univ. Polytechnique  Hauts-de-France, INSA Hauts-de-France,  CERAMATHS-Laboratoire de Mat\'eriaux C\'eramiques et de Math\'ematiques, F-59313 Valenciennes, France.\vspace{0.5cm}\\
$^2$ Al Maaref University, Department of Mathematics, Beirut Campus, Airport Bridge, Ghobeiry 5078 0025, LEBANON.\vspace{0.5cm}\\
$^3$ Department of Mathematics, College of Science, King Saud University, P.O. Box 2455, Riyadh 11451, Saudi Arabia.\vspace{0.5cm}\\
Email: mohammad.akil@uphf.fr, mouhammad.ghader@mu.edu.lb, zhajjej@ksu.edu.sa, mohamad.sammoury@mu.edu.lb\\ \\
$^\ast$ Corresponding author. \vspace{0.5cm}\\}
\setcounter{equation}{0}
\begin{abstract}
 In this paper, we investigate the stability of the transmission problem for Rayleigh beam model with heat conduction. First, we reformulate our system into an evolution equation and prove our problem's well-posedness. Next, we demonstrate the resolvent of the operator is compact in the energy space, then by using the general criteria of Arendt-Batty, we prove that the thermal dissipation is enough to stabilize our model. Finally, a polynomial energy decay rate has been obtained which depends on the mass densities and the moments of inertia of the Rayleigh beams.
\\[0.1in]
\textbf{Keywords.} Rayleigh beam; heat conduction; $C_0$-semigroup;  polynomial  stability.
\end{abstract}
\pagenumbering{roman}
\maketitle
\tableofcontents

\clearpage
\pagenumbering{arabic}
\setcounter{page}{1}
\section{Introduction}\label{S:=1}
\noindent In this paper, we study the stability of  a transmission problem for Rayleigh beam model with  heat conduction
\begin{eqnarray}
\rho_1\, u_{tt}-\alpha_1\, u_{xxtt}+\beta_1 u_{xxxx}+\gamma\, \theta_{xx}=0,& (x,t)\in (0,L_0)\times (0,+\infty),\label{E:=(1.1)}\nline
\rho_2\, y_{tt}-\alpha_2\, y_{xxtt}+\beta_2\, y_{xxxx}=0,& (x,t)\in (L_0,L)\times (0,+\infty),\label{E:=(1.2)}
\nline
\rho_0\,\theta_t-\kappa\, \theta_{xx}-\gamma\, u_{xxt}=0,& (x,t)\in (0,L_0)\times (0,+\infty),\label{E:=(1.3)}
\end{eqnarray}
with  boundary conditions
\begin{eqnarray}
\theta(0,t)=\theta(L_0,t)=0,&t\in  (0,+\infty),\label{E:=(1.4)}\nline
u(0,t)=u_x(0,t)=0,&t\in  (0,+\infty),\label{E:=(1.5)}\nline
y(L,t)=y_x(L,t)=0,&t\in  (0,+\infty),\label{E:=(1.6)}
\end{eqnarray}
 transmission conditions
\begin{eqnarray}
u(L_0,t)=y(L_0,t),&t\in  (0,+\infty),\label{E:=(1.7)}\nline
u_x(L_0,t)=y_x(L_0,t),&t\in  (0,+\infty),\label{E:=(1.8)}\nline
\beta_1 u_{xx}(L_0,t)=\beta_2\, y_{xx}(L_0,t),&t\in  (0,+\infty),\label{E:=(1.9)}\nline
\gamma\, \theta_x(L_0,t)+\beta_1\, u_{xxx}(L_0,t)-\alpha_1 u_{xtt}(L_0,t)-\beta_2\, y_{xxx}(L_0,t)+\alpha_2 y_{xtt}(L_0,t)=0,&t\in  (0,+\infty),\label{E:=(1.10)}
\end{eqnarray}
and  initial data
\begin{eqnarray}
 \left(u(x,0),u_t(x,0),\theta(x,0)\right)=\left(u_0(x),u_1(x),\theta_0(x)\right),& x\in (0,L_0),\label{E:=(1.11)}\nline
 \left(y(x,0),y_t(x,0)\right)=\left(y_0(x),y_1(x)\right),& x\in (L_0,L),\label{E:=(1.12)}
\end{eqnarray}
where, for $i=1, 2$,  $\rho_i>0$ is the mass density per unit volume, $\alpha_i>0$ is the moment of inertia of the cross-sections, $\beta_i>0$ is the stiffness constant, while $\rho_0>0$ and $\kappa>0$ represent, respectively, the specific heat and the thermal conductivity. Here $0<L_0<L$ and $\gamma$ is a non-zero real number.\\
The model at hand describes a Rayleigh beam formed of two distinct materials, one of which is sensitive to thermal differences and the other of which is unaffected by temperature changes. In other words, the material has a limited thermoelastic effect \cite{Rivera-Oquendo-2001, Rivera-Oquendo-2004}.
\\
The stabilization of the Rayleigh beam equation retains the attention of many authors. In this regard, different types of damping have been introduced to the Rayleigh beam equation and several uniform and polynomial stability results have been obtained. Rao \cite{Rao-Rayeligh-1996} studied the stabilization of Rayleigh beam equation subject to a positive internal viscous damping. Using a constructive approximation, he established the optimal exponential decay rate. There exists  many papers concerning the stability with different types of damping \cite{MNSW-2017,BMW-2015,Wehbe2006,MR953313,MR1029069,MR1061153,MR1108855}.\\
In \cite{MR3395754}, the authors  are concerned with the stability of an interconnected system of an Euler-Bernoulli beam and a heat equation with boundary coupling. The boundary temperature of the beam is fed as the boundary moment of the Euler-Bernoulli equation and the boundary angular velocity of the Euler-Bernoulli beam is fed into the boundary heat flux of the heat equation. It is shown that the spectrum of the closed-loop system consists of only two branches: one along the real axis and the other along two parabolas that are symmetric to the real axis and open to the imaginary axis. The asymptotic expressions of both eigenvalues and eigenfunctions are obtained. With a careful estimate of the resolvent operator, the completeness of the root subspaces of the system is verified. The Riesz basis property and exponential stability of the system are then proved. Moreover, it is shown that the semigroup generated by the system operator is of Gevrey class $\delta>2$.\\
In \cite{MR3163488}, the authors  studied the stabilization problem for a coupled PDE system in which the beam ($1-$dimensional or $2-$dimensional) and heat equations are coupled at the boundary conditions. Moreover, a dissipative damping is produced in the heat equation via the boundary connections only.
   In the first part, the authors considered the asymptotic behavior of the $1-$dimensional coupled system mainly by the Riesz basis approach. By using a detailed spectral analysis for the system operator, they obtained asymptotic expressions for the spectrum and the corresponding eigenvectors. The authors further obtained a spectrum-determined growth condition by showing the Riesz basis property of the eigenvectors. Then, based on the spectral distribution, they deduced the Gevrey regularity of the semigroup for the system and the exponential decay rate of the system energy.
   In the second part, the authors investigated the asymptotic behavior of the $2-$dimensional coupled PDE system by using the frequency domain method. By estimating the uniform boundedness of the norm of the resolvent operator along the imaginary axis, they showed that the $2-d$ coupled system is also exponentially stable when an additional dissipation in the boundary of the plate part exists.\\
 We mention some papers studied the stability of different system under heat conduction \cite{Rivera-Racke-2021,Rivera-Racke-2017,Rivera-Racke-2001,MR4500789,ARY-2023,SHLR-2019,NR-2018,LA1,LA2,LA3,LA4,LA5,LA6,LA7,LA8,LA9,LA10,Gues2022,Guess2022,YMG-2019,Ammari-2016,Ammari-2015,NVF-2009,WN-2014,AGW-2018,BenAissa2022,DSPS-2022,ACIY-2019,WY-2020,WY-2022,FYA-2023}.

 Now, we mention some papers concerning a transmission wave-heat system. In \cite{MR3263148}, the author studied the stability analysis of an interaction system comprised of a wave equation and a heat equation with memory, where the hereditary heat conduction is due to Gurtin-Pipkin law or Coleman-Gurtin law. First, she showed the strong asymptotic stability of solutions to this system. Then, the exponential stability of the interaction system is obtained when the hereditary heat conduction is of Gurtin-Pipkin type. Further, she showed the lack of uniform decay of the interaction system when the heat conduction law is of Coleman-Gurtin type. In \cite{MR4491743}, the authors extended the result of \cite{MR3263148} by proving the optimal polynomial decay rate of type $1/t$ when the heat conduction law is of Coleman-Gurtin type. \\
 \\
 To our best knowledge, the transmission problem for Rayleigh beam with heat conduction is not treated in the literature. The goal of this paper is to fix this gap by considering System \eqref{E:=(1.1)}-\eqref{E:=(1.12)}. \\
 \\
 The paper is organized as follows: In Section \ref{S:=2}, we formulate the System \eqref{E:=(1.1)}-\eqref{E:=(1.12)}  into an evolution equation $\Phi_t=\mathcal{A}\Phi,\quad \Phi(0)=\Phi_0=(u_0,y_0,u_1,y_1,\theta_0)$ (see \eqref{E:=(2.14)}). Next,  Section \ref{S:=3} is divided into two subsections. In subsection \ref{S:=3.1} we study the well-posedness of Problem \eqref{E:=(1.1)}-\eqref{E:=(1.12)}\textcolor{red}{.} According to Lumer-Phillips theorem (see \cite{Liu-Zheng-1999, Pazy-1983}), we  prove that the operator $\mathcal{A}$ is m-dissipative. In Subsection \ref{S:=3.2}, we prove the strong stability of \eqref{E:=(1.1)}-\eqref{E:=(1.12)}. Firstly, we prove that the the operator ${\mathcal{A}}$  has a compact resolvent on the energy space. Next, we prove the strong stability of System \eqref{E:=(1.1)}-\eqref{E:=(1.12)} by using Arendt-Batty Theorem. In Section  \ref{S:=4}, we prove the polynomial stability of System \eqref{E:=(1.1)}-\eqref{E:=(1.12)}. The decay rate of the energy depends on the physical coefficients. We obtain the following result:\\
 $\bullet$ A polynomial energy decay rate of type $t^{-2}$ if $\rho_1\geq \rho_2$ and $\alpha_1\geq \alpha_2$. \\
 $\bullet$ A polynomial energy decay rate of type $t^{-1}$ if $\rho_1<\rho_2$ or $\alpha_1<\alpha_2$.\\
 We use Borichev-Tomilov Theorem combining with a specific multiplier technics and a particular attention of the sharpness of the estimates to optimize the results.
\section{Formulation of the Problem} \label{S:=2}
\noindent We start this section by  defining the energy of a solution of System  \eqref{E:=(1.1)}-\eqref{E:=(1.12)} by
\begin{equation*}
E(t)=\frac{1}{2}\int_0^{L_0}\left( \rho_1 \left|u_t\right|^2+\alpha_1\, \left|u_{xt}\right|^2+\beta_1\left|u_{xx}\right|^2+\rho_0\left|\theta\right|^2\right) dx+\frac{1}{2}\int_{L_0}^L\left( \rho_2 \left|y_t\right|^2+\alpha_2\, \left|y_{xt}\right|^2+\beta_2\left|y_{xx}\right|^2\right) dx.
\end{equation*}
Multiplying \eqref{E:=(1.1)} and \eqref{E:=(1.3)} by $u_t$ and $\theta$, respectively,    integrating by parts over $(0,L_0)$ with respect to $x$ and taking the sum of the resulting equations, we get
\begin{equation}\label{E:=(2.1)}
\begin{array}{ll}

 \frac{1}{2}\displaystyle{\frac{d}{ dt}\int_0^{L_0}\left(\rho_1 \left|u_t\right|^2  +\alpha_1\, \left|u_{xt}\right|^2+\beta_1 \left|u_{xx}\right|^2+\rho_0\left|\theta\right|^2\right)dx+\kappa \int_0^{L_0} \left|\theta_x\right|^2dx}

\nline

\hspace{2cm}\displaystyle{+\left[\left(\gamma\, \theta_x+\beta_1\,  u_{xxx}-\alpha_1 \, u_{xtt}\right) u_t-\beta_1 \, u_{xx}\, u_{xt} -\gamma\, u_{xt}\,  \theta\right]_0^{L_0}=0}.

\end{array}
\end{equation}
Next, multiplying \eqref{E:=(1.2)}  by $y_t$,     integrating by parts over $(L_0,L)$ with respect to $x$,  we obtain
\begin{equation}\label{E:=(2.2)}
 \frac{1}{2}\frac{d}{dt}\int_{L_0}^L\left(\rho_2 \left|y_t\right|^2  +\alpha_2\, \left|y_{xt}\right|^2+\beta_2 \left|y_{xx}\right|^2\right)dx+\left[\left(\beta_2\,  y_{xxx}-\alpha_2\,  y_{xtt}\right) y_t-\beta_2 \, y_{xx}\, y_{xt}\right]_{L_0}^L=0.
\end{equation}
Adding \eqref{E:=(2.1)} and \eqref{E:=(2.2)}, then using the boundary condition \eqref{E:=(1.4)}-\eqref{E:=(1.10)}, we infer that
\begin{equation*}
E'(t)=-\kappa  \int_0^{L_0}\left|\theta_x\right|^2dx\leq 0.
\end{equation*}
Hence, System \eqref{E:=(1.1)}-\eqref{E:=(1.12)} is dissipative in the sense that its energy is non increasing with respect to the time $t.$ \\[0.1in]
\noindent We start our study by formulating  problem \eqref{E:=(1.1)}-\eqref{E:=(1.12)} in an appropriate Hilbert space: \\[0.1in]
$\bullet$ We introduce the following  spaces:
\begin{equation*}
\left\{
\begin{array}{llll}

H^1_L\left(0,L_0\right)=\left\{u\in H^1\left(0,L_0\right)\ |\ u(0)=0\right\},

\nline

H^1_R\left(L_0,L\right)=\left\{y\in H^1\left(L_0,L\right)\ |\ y(L)=0\right\},

\nline

\mathbb{V}_1=\left\{\left(u,y\right)\in H^1_L(0,L_0)\times H^1_R(L_0,L)\ |\  u(L_0)=y(L_0)
\right\},

\nline

\mathbb{W}_1=\left\{\left(u,y\right)\in \left(H^2(0,L_0)\times H^2(L_0,L)\right)\cap \mathbb{V}_1\ |\
u_x(0)=y_x(L)=0,\  u_x(L_0)=y_x(L_0)\right\}.

\end{array}
\right.
\end{equation*}
Set
\begin{equation*}
\left\{
\begin{array}{lll}

\mathbb{L}_1=L^2(0,L_0)\times L^2(L_0,L),\quad  \mathbb{L}_2=L^2(0,L_0)\times L^2(L_0,L)\times L^2(0,L_0),

\nline

 \mathbb{V}_2=\mathbb{V}_1\times L^2(0,L_0),\quad \mathbb{W}_2=\mathbb{W}_1\times H^1_0(0,L_0).

\end{array}
\right.
\end{equation*}
$\bullet$ Let $\left(u,y,\theta\right)$  be a regular solution of System  \eqref{E:=(1.1)}-\eqref{E:=(1.12)}. Let $\left(\hat{u},\hat{y}, \hat{\theta}\right)\in {\mathbb{W}_2}$. Multiplying \eqref{E:=(1.1)}, \eqref{E:=(1.2)},  and \eqref{E:=(1.3)} by $\overline{\hat{u}},\ \overline{\hat{y}},$ and $\overline{\hat{\theta}}$, respectively, integrating  by parts over $(0,L_0),\ (L_0,L),$ and $(0,L_0)$, respectively and  then taking the sum, we derive
\begin{equation}\label{E:=(2.3)}
\begin{array}{llll}

\int_0^{L_0} \left(\rho_1 u_{tt}\overline{\hat{u}}+\alpha_1\, u_{xtt}\overline{\hat{u}_x}\right)dx+\int_{L_0}^L \left(\rho_2 y_{tt}\overline{\hat{y}}+\alpha_2\, y_{xtt}\overline{\hat{y}_x}\right)dx-\gamma\, \int_0^{L_0}\left(\theta_{x}\overline{\hat{u}_x}-u_{xt}\overline{\hat{\theta_x}}\right) dx

\nline

+\beta_1 \int_0^{L_0}u_{xx}\overline{\hat{u}_{xx}}\, dx+\beta_2 \int_{L_0}^Ly_{xx}\overline{\hat{y}_{xx}}\, dx+\rho_0\int_0^{L_0}\theta_t \overline{\hat{\theta}}\, dx+\kappa\, \int_0^{L_0}\theta_{x}\overline{\hat{\theta}_x}\, dx

\nline

-\left[\left(\alpha_1 u_{xtt}-\beta_1 u_{xxx}\right) \overline{\hat{u}}+\beta_1 u_{xx} \overline{\hat{u}_x}+\kappa  \theta_{x} \overline{\hat{\theta}}\right]_{0}^{L_0}-\left[\left(\alpha_2 y_{xtt}-\beta_2 y_{xxx} \right) \overline{\hat{y}}+\beta_2 y_{xx} \overline{\hat{y}_x}\right]_{L_0}^{L}

\nline

+\gamma\left[\theta_{x}\overline{\hat{u}}-u_{xt}\overline{\hat{\theta}}\right]_0^{L_0}=0.

\end{array}
\end{equation}
Since $\left(\hat{u},\hat{y}, \hat{\theta}\right)\in {\mathbb{W}_2}$, then
\begin{equation*}
\hat{u}(0)=\hat{u}_x(0)=\hat{y}(L)=\hat{y}_x(L)=\hat{\theta}(0)=\hat{\theta}(L_0)=0,\quad \hat{u}(L_0)=\hat{y}(L_0),\quad  \hat{u}_x(L_0)=\hat{y}_x(L_0).
\end{equation*}
Using the above boundary conditions in \eqref{E:=(2.3)}, we get
\begin{equation*}
\begin{array}{llll}

\int_0^{L_0} \left(\rho_1 u_{tt}\overline{\hat{u}}+\alpha_1\, u_{xtt}\overline{\hat{u}_x}\right)dx+\int_{L_0}^L \left(\rho_2y_{tt}\overline{\hat{y}}+\alpha_2\, y_{xtt}\overline{\hat{y}_x}\right)dx+\gamma\, \int_0^{L_0}\left(u_{xt}\overline{\hat{\theta}_x}-\theta_{x}\overline{\hat{u}_x}\right) dx

\nline

+\beta_1 \int_0^{L_0}u_{xx}\overline{\hat{u}_{xx}}\, dx+\beta_2 \int_{L_0}^Ly_{xx}\overline{\hat{y}_{xx}}\, dx

+\rho_0\int_0^{L_0}\theta_t \overline{\hat{\theta}}\, dx+\kappa\, \int_0^{L_0}\theta_{x}\overline{\hat{\theta}_x}\, dx

\nline

+\left(\beta_2 y_{xx}\left(L_0\right)-\beta_1 u_{xx}\left(L_0\right)\right) \overline{\hat{y}_x}\left(L_0\right)

\nline

+\left(\gamma \theta_x(L_0)+\beta_1 u_{xxx}\left(L_0\right)-\alpha_1 u_{xtt}\left(L_0\right) -\beta_2 y_{xxx}\left(L_0\right)+\alpha_2 y_{xtt}\left(L_0\right) \right) \overline{\hat{y}}\left(L_0\right)=0.

\end{array}
\end{equation*}
Using the boundary conditions  \eqref{E:=(1.9)} and \eqref{E:=(1.10)} in the above equation, we obtain
\begin{equation*}
\begin{array}{llll}

\int_0^{L_0} \left(\rho_1u_{tt}\overline{\hat{u}}+\alpha_1\, u_{xtt}\overline{\hat{u}_x}\right)dx+\int_{L_0}^L \left(\rho_2y_{tt}\overline{\hat{y}}+\alpha_2\, y_{xtt}\overline{\hat{y}_x}\right)dx+\gamma\, \int_0^{L_0}\left(u_{xt}\overline{\hat{\theta}_x}-\theta_{x}\overline{\hat{u}_x}\right) dx

\nline

+\beta_1 \int_0^{L_0}u_{xx}\overline{\hat{u}_{xx}}\, dx+\beta_2 \int_{L_0}^Ly_{xx}\overline{\hat{y}_{xx}}\, dx+\rho_0\int_0^{L_0}\theta_t \overline{\hat{\theta}}\, dx+\kappa\, \int_0^{L_0}\theta_{x}\overline{\hat{\theta}_x}\, dx=0.

\end{array}
\end{equation*}
Equivalently, the variational equation of problem \eqref{E:=(1.1)}-\eqref{E:=(1.12)} is given by
\begin{equation}\label{E:=(2.4)}
\begin{array}{llll}

\beta_1 \int_0^{L_0}u_{xx}\overline{\hat{u}_{xx}}\, dx+\beta_2 \int_{L_0}^Ly_{xx}\overline{\hat{y}_{xx}}\, dx

\nline

+\kappa\, \int_0^{L_0}\theta_{x}\overline{\hat{\theta}_x}\, dx+\gamma \int_0^{L_0}\left(u_{xt}\overline{\hat{\theta}_x}-\theta_{x}\overline{\hat{u}_x}\right) dx

\nline

+\int_0^{L_0} \left(\rho_1 u_{tt}\overline{\hat{u}}+\alpha_1\, u_{xtt}\overline{\hat{u}_x}\right)dx+\int_{L_0}^L \left(\rho_2y_{tt}\overline{\hat{y}}+\alpha_2\, y_{xtt}\overline{\hat{y}_x}\right)dx+\rho_0\int_0^{L_0}\theta_t \overline{\hat{\theta}}\, dx=0.

\end{array}
\end{equation}
$\bullet$ We identify ${\mathbb{L}_1}$ with its dual $\mathbb{L}_1'$ and ${\mathbb{L}_2}$ with its dual $\mathbb{L}_2'$,  so that we have the following continuous embeddings:
\begin{equation}\label{E:=(2.5)}
\left\{
\begin{array}{lll}

{\mathbb{W}_2}\subset {\mathbb{V}_2}\subset {\mathbb{L}_2}\subset \mathbb{V}_2'\subset \mathbb{W}_2',

\nline

{\mathbb{W}_1}\subset {\mathbb{V}_1}\subset {\mathbb{L}_1}\subset \mathbb{V}_1'\subset \mathbb{W}_1'.
\end{array}
\right.
\end{equation}
$\bullet$  We introduce the following bilinear forms:
\begin{equation}\label{E:=(2.6)}
\begin{array}{lllll}
\begin{array}{lll}

\text{for } Z=\left(u,y\right),\, \hat{Z}=\left(\hat{u},\hat{y}\right)\in \mathbb{W}_1:

\nline

a\left(Z,\hat{Z}\right)=\beta_1\left<u_{xx},\hat{u}_{xx}\right>_{L^2(0,L_0)}+\beta_2 \left<y_{xx},\hat{y}_{xx}\right>_{L^2(L_0,L)},
\end{array}
\\ \\
\begin{array}{lll}

\text{for }  \Phi=\left(u,y,\theta \right),\, \hat{\Phi}=\left(\hat{u},\hat{y},\hat{\theta}\right)\in {\mathbb{W}_2}:

\nline

b\left(\Phi,\hat{\Phi}\right)=\gamma\int_0^{L_0}\left(u_{x}\overline{\hat{\theta}_x}-\theta_{x}\overline{\hat{u}_x}\right) dx+\kappa \left<\theta_{x},\hat{\theta}_{x}\right>_{L^2(0,L_0)},
\end{array}
\\ \\
\begin{array}{lll}

\text{for }  U=\left(u,y,\theta\right),\, \hat{U}=\left(\hat{u},\hat{y},\hat{\theta}\right)\in {\mathbb{V}_2}:

\nline

c\left(U,\hat{U}\right)=\rho_1\left<u,\hat{u}\right>_{L^2(0,L_0)}+\alpha_1 \left<u_x,\hat{u}_x\right>_{L^2(0,L_0)}+\rho_2\left<y,\hat{y}\right>_{L^2(L_0,L)}\nline\hspace{2cm}+\alpha_1 \left<y_x,\hat{y}_x\right>_{L^2(L_0,L)}+\rho_0\left<\theta,\hat{\theta}\right>_{L^2(0,L_0)}.
\end{array}
\end{array}
\end{equation}
Here and below, $\left<\cdot,\cdot\right>_{L^2(0,L_0)}$ and $\left<\cdot,\cdot\right>_{L^2(L_0,L)}$ denote the usual inner product of $L^2(0, L_0)$ and $L^2(L_0, L)$, respectively,  and   $\left\|\cdot \right\|_{L^2(0,L_0)}$ and $\left\|\cdot \right\|_{L^2(L_0,L)}$   their  corresponding norms. The form $a(\cdot, \cdot)$ (resp. $c(\cdot, \cdot)$) is a bilinear continuous coercive form on $\mathbb{W}_1\times \mathbb{W}_1$ (resp. on ${\mathbb{V}_2}\times {\mathbb{V}_2}$), while $b(\cdot, \cdot)$ is a bilinear continuous form on ${\mathbb{W}_2}\times {\mathbb{W}_2}$ and satisfies
\begin{equation}\label{E:=(2.7)}
 \Re\left\{ b\left(\Phi,{\Phi}\right)\right\}=\kappa \left<\theta_{x},{\theta}_{x}\right>_{L^2(0,L_0)}=\kappa\int_0^{L_0}\left|\theta_x\right|^2 dx,\quad \forall\; \Phi=\left(u,y,\theta \right)\in {\mathbb{W}_2}.
\end{equation}
$\bullet$ We define the operators $C\in \mathcal{L}\left({\mathbb{V}_2},\mathbb{V}_2'\right)$, $B\in \mathcal{L}\left({\mathbb{W}_2},\mathbb{W}_2'\right)$, and $A_0\in \mathcal{L}\left(\mathbb{W}_1,\mathbb{W}_1'\right)$ by
\begin{equation}\label{E:=(2.8)}
\left\{
\begin{array}{lll}
\left<C{U},\hat{U}\right>_{\mathbb{V}_2'\times {\mathbb{V}_2}}:=c\left(U,\hat{U}\right),\qquad \forall\; {U}=\left(u,y,\theta\right),\ \hat{U}=\left(\hat{u},\hat{y}, \hat{\theta}\right)\in {\mathbb{V}_2},

\nline

\left<B{U},\hat{U}\right>_{\mathbb{W}_2'\times {\mathbb{W}_2}}=b\left(U,\hat{U}\right),\qquad \forall\; {U}=\left(u,y,\theta\right),\ \hat{U}=\left(\hat{u},\hat{y}, \hat{\theta}\right)\in {\mathbb{W}_2},

\nline

\left<A_0{Z},\hat{Z}\right>_{\mathbb{W}_1'\times \mathbb{W}_1}=a\left(Z,\hat{Z}\right)
,\qquad \forall\; {Z}=\left(u,y\right),\ \hat{Z}=\left(\hat{u},\hat{y}\right)\in \mathbb{W}_1,

\nline

A_1 Z=\left(A_0 Z,0\right),\quad \forall Z=(u,y)\in \mathbb{W}_1.

\end{array}
\right.
\end{equation}
The operator $C$ (resp. $A_0$) is an isomorphism of ${\mathbb{V}_2}$ onto $\mathbb{V}_2'$  (resp. $\mathbb{W}_1$ onto $\mathbb{W}_1'$) and is the canonical isomorphism, so we can introduce $c\left(\cdot,\cdot\right)$ (resp. $a\left(\cdot,\cdot\right)$ as a scalar product on ${\mathbb{V}_2}$ (resp. on $\mathbb{W}_1$), i.e.,
\begin{equation}\label{E:=(2.9)}
\left\{
\begin{array}{lll}
\|U\|_{{\mathbb{V}_2}}^2=\left<{U},\hat{U}\right>_{{\mathbb{V}_2}}=c\left(U,\hat{U}\right)=\left<C{U},\hat{U}\right>_{\mathbb{V}_2'\times {\mathbb{V}_2}},\qquad \forall\; {U}=\left(u,y,\theta\right),\ \hat{U}=\left(\hat{u},\hat{y}, \hat{\theta}\right)\in {\mathbb{V}_2},

\nline

\|Z\|_{{\mathbb{W}_1}}^2=\left<{Z},\hat{Z}\right>_{\mathbb{W}_1}=a\left(Z,\hat{Z}\right)=\left<A_0{Z},\hat{Z}\right>_{\mathbb{W}_1'\times \mathbb{W}_1},\qquad \forall\; {Z}=\left(u,y\right),\ \hat{Z}=\left(\hat{u},\hat{y}\right)\in \mathbb{W}_1.

\end{array}
\right.
\end{equation}
$\bullet$ The variational equation \eqref{E:=(2.4)} can be written in terms of the above operators
as an equation in $\mathbb{W}_2'$ as follows:
\begin{equation}\label{E:=(2.10)}
C\left(u_{tt},y_{tt},\theta_t\right)+B\left(u_t,y_t,\theta\right)+A_1(u,y)=0\quad \text{in } \mathbb{W}_2'.
\end{equation}
Furthermore, assume that $B\left(u_t,y_t,\theta\right)+A_1(u,y)\in \mathbb{V}_2'$ , then we obtain that
\begin{equation*}
\left(u_{tt},y_{tt},\theta_t\right)+C^{-1}\left(B\left(u_t,y_t,\theta\right)+A_1(u,y)\right)=0\quad \text{in } \mathbb{V}_2.
\end{equation*}
Defining $v=u_t$ and $z=y_t$, then \eqref{E:=(2.10)} can be written as
\begin{equation*}
\left(v,z,\theta\right)_t=-C^{-1}\left(B\left(v,z,\theta\right)+A_1(u,y)\right).
\end{equation*}
$\bullet$ We introduce the following energy space:
\begin{equation*}
\mathcal{H}=\mathbb{W}_1\times {\mathbb{V}_2}.
\end{equation*}
For all $\Phi=\left(\Phi_1,\Phi_2\right)\in \mathcal{H}$ and $\hat{\Phi}=\left(\hat{\Phi}_1,\hat{\Phi}_2\right)\in \mathcal{H}$, such that $\Phi_1=\left(u,y\right),\ \Phi_2=\left(v,z,\theta\right),\ \hat{\Phi}_1=\left(\hat{u},\hat{y}\right),$ and $\hat{\Phi}_2=\left(\hat{v},\hat{z},\hat{\theta}\right)$, it is easy to check that the space $ \mathcal{H}$  is a Hilbert space over $\mathbb{C}$ equipped  with the following inner product
\begin{equation}\label{E:=(2.11)}
\begin{array}{lllll}

\left<\Phi,\hat{\Phi}\right>_{\mathcal{H}}&=&\left<\Phi_1,\hat{\Phi}_1\right>_{\mathbb{W}_1}+\left<\Phi_2,\hat{\Phi}_2\right>_{{\mathbb{V}_2}}

\nline

&=&a\left(\Phi_1,\hat{\Phi}_1\right)+c\left(\Phi_2,\hat{\Phi}_2\right)

\nline

&=&	\beta_1\int_0^{L_0}u_{xx}\overline{\hat{u}_{xx}}\, dx+\beta_2\int_{L_0}^Ly_{xx}\overline{\hat{y}_{xx}} \, dx

\nline

&& \  +\int_0^{L_0}\left(\rho_1 v\overline{\hat{v}}+\alpha_1 v_{x}\overline{\hat{v}_{x}}\right) dx+\int_{L_0}^L\left(\rho_2z\overline{\hat{z}} +\alpha_2 z_{x}\overline{\hat{z}_{x}} \right) dx+\rho_0\int_{0}^{L_0}\theta\overline{\hat{\theta}}\, dx.
	
\end{array}	
\end{equation}
Hereafter, we use $\|U\|_{{\mathcal{H}}}$ to denote the corresponding norm.\\[0.1in]
$\bullet$ For all $\Phi=\left(\Phi_1,\Phi_2\right)\in \mathcal{H}$, such that $\Phi_1=\left(u,y\right)$ and $\Phi_2=\left(v,z,\theta\right),$ we define the unbounded linear operator ${\mathcal{A}}:\ D\left(\mathcal{A}\right)\subset\mathcal{H}\to \mathcal{H}$ by
\begin{equation}\label{E:=(2.13)}
{\mathcal{A}}\Phi=\left(v,z,-C^{-1}\left(B\Phi_2+A_1\Phi_1\right)\right),
\end{equation}
with domain
\begin{equation}\label{E:=(2.12)}
D\left(\mathcal{A}\right)=\left\{\Phi=\left(\Phi_1,\Phi_2\right)\in \mathbb{W}_1\times {\mathbb{V}_2}\ |\ \left(\Phi_1,\Phi_2\right)\in \mathbb{W}_1\times {\mathbb{W}_2},\  B\Phi_2+A_1\Phi_1\in  \mathbb{V}_2' \right\}.
\end{equation}
$\bullet$ If $\Phi=\left(u,y,v,z,\theta\right)$ is a regular solution of System  \eqref{E:=(1.1)}-\eqref{E:=(1.12)}, then we rewrite this system as the following evolution equation
\begin{equation}\label{E:=(2.14)}
\Phi_t={\mathcal{A}}\Phi,\quad
\Phi(0)=\Phi_0,
\end{equation}
where $\Phi_0=\left(u_0,y_0,u_1,y_1,\theta_0\right)$.
\section{Well-Posedness and Strong Stability}\label{S:=3}
\subsection{Well-posedness of the problem.}\label{S:=3.1}
\noindent For the well-posedness of Problem \eqref{E:=(1.1)}-\eqref{E:=(1.12)}, according to Lumer-Phillips theorem (see \cite{Liu-Zheng-1999, Pazy-1983}), we need to prove that the operator $\mathcal{A}$ is m-dissipative. Hence, we shall prove the following proposition.
			\begin{Proposition}\label{P:=3.1}
	The unbounded linear operator $\mathcal{A}$ is m-dissipative in the energy space $\mathcal{H}$.
		\end{Proposition}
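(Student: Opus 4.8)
The plan is to apply the Lumer--Phillips theorem, for which I must check that $\mathcal A$ is dissipative on $\mathcal H$ and that $I-\mathcal A:D(\mathcal A)\to\mathcal H$ is onto. For dissipativity, fix $\Phi=(\Phi_1,\Phi_2)\in D(\mathcal A)$ with $\Phi_1=(u,y)$, $\Phi_2=(v,z,\theta)$, and compute $\langle\mathcal A\Phi,\Phi\rangle_{\mathcal H}$ directly from \eqref{E:=(2.13)}, \eqref{E:=(2.11)} and the identities \eqref{E:=(2.8)}--\eqref{E:=(2.9)}. Since $c\bigl(-C^{-1}(B\Phi_2+A_1\Phi_1),\Phi_2\bigr)=-\langle B\Phi_2+A_1\Phi_1,\Phi_2\rangle=-b(\Phi_2,\Phi_2)-a\bigl(\Phi_1,(v,z)\bigr)$, this gives
\[
\langle\mathcal A\Phi,\Phi\rangle_{\mathcal H}=a\bigl((v,z),\Phi_1\bigr)-a\bigl(\Phi_1,(v,z)\bigr)-b(\Phi_2,\Phi_2),
\]
and, $a(\cdot,\cdot)$ being Hermitian, the first two terms are complex conjugates of each other, hence their sum is purely imaginary. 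With \eqref{E:=(2.7)} this yields $\Re\langle\mathcal A\Phi,\Phi\rangle_{\mathcal H}=-\kappa\int_0^{L_0}|\theta_x|^2\,dx\le0$, which is dissipativity (and it recovers the energy identity of Section~\ref{S:=2}).

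For maximality, given $F=\bigl((f^1,f^2),(f^3,f^4,f^5)\bigr)\in\mathcal H$ I would solve $(I-\mathcal A)\Phi=F$. The first block forces $v=u-f^1$, $z=y-f^2$; applying $C$ to the second block turns it into $C\Phi_2+B\Phi_2+A_1\Phi_1=CF_2$ in $\mathbb W_2'$. Inserting $\Phi_2=(u-f^1,\,y-f^2,\,\theta)$ and testing against $\hat\Phi=(\hat u,\hat y,\hat\theta)\in\mathbb W_2$ recasts this as a variational problem $\mathfrak c(U,\hat\Phi)=\ell(\hat\Phi)$ for the unknown $U=(u,y,\theta)\in\mathbb W_2$, where $\mathfrak c$ gathers the $c$-terms in $(u,y)$, the form $a(\cdot,\cdot)$, and the $b$-terms $\gamma\int_0^{L_0}(u_x\overline{\hat\theta_x}-\theta_x\overline{\hat u_x})\,dx+\kappa\langle\theta_x,\hat\theta_x\rangle_{L^2(0,L_0)}$, and $\ell$ is a bounded antilinear form built from $F$ and $f^1,f^2$. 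Continuity of $\mathfrak c$ and $\ell$ on $\mathbb W_2$ follows from Cauchy--Schwarz and the Poincaré-type inequalities attached to the clamping/transmission conditions defining $\mathbb W_1$ and $H^1_0(0,L_0)$; coercivity is the crux: the $\gamma$-coupling is skew-Hermitian, so it drops out of $\Re\mathfrak c(U,U)$, leaving
\[
\Re\mathfrak c(U,U)=\beta_1\|u_{xx}\|^2+\beta_2\|y_{xx}\|^2+\kappa\|\theta_x\|^2+\bigl(\text{positive }L^2\text{-terms in }u,y,\theta\bigr)\ \gtrsim\ \|U\|_{\mathbb W_2}^2,
\]
with \emph{no} smallness condition on $\gamma$. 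Lax--Milgram then gives a unique $U\in\mathbb W_2$; putting $v=u-f^1$, $z=y-f^2$, I would verify $(v,z)\in\mathbb W_1$ (the conditions at $0$, $L_0$, $L$ pass to $v,z$ because $u,y,f^1,f^2$ all satisfy them) and $B\Phi_2+A_1\Phi_1=C(F_2-\Phi_2)\in\mathbb V_2'$, so that $\Phi=\bigl((u,y),(v,z,\theta)\bigr)\in D(\mathcal A)$ solves $(I-\mathcal A)\Phi=F$.

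Dissipativity together with $\mathrm{Ran}(I-\mathcal A)=\mathcal H$ yields, via Lumer--Phillips, that $\mathcal A$ is m-dissipative (hence densely defined and closed, generating a contraction $C_0$-semigroup on $\mathcal H$). The one genuinely delicate step is the coercivity estimate above: one must resist absorbing the coupling term as a perturbation (which would force something like $\gamma^2$ small compared to $\kappa\beta_1$) and instead exploit its antisymmetry, while keeping in mind that by \eqref{E:=(2.9)} the norm of $\mathbb W_1$ is precisely $a(Z,Z)=\beta_1\|u_{xx}\|^2+\beta_2\|y_{xx}\|^2$, so the surviving positive terms truly dominate the full $\mathbb W_2$-norm. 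Everything else is routine bookkeeping of the boundary and transmission conditions.
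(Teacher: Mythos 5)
Your proposal is correct and follows essentially the same route as the paper: dissipativity via the skew-symmetry of the $a$-terms and \eqref{E:=(2.7)}, and maximality via Lax--Milgram applied to the coercive form $c+b+a$ on $\mathbb{W}_2$, whose real part is exactly $\|\cdot\|_{\mathbb{V}_2}^2+\kappa\|\theta_x\|^2+\|\cdot\|_{\mathbb{W}_1}^2$ with no smallness condition on $\gamma$. The only cosmetic difference is that you solve the resolvent equation for the unknown $(u,y,\theta)$ whereas the paper eliminates $\Phi_1=(v,z)+g$ and solves for $(v,z,\theta)$; the bilinear form and the coercivity argument are identical.
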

\begin{proof} We first prove that $\mathcal{A}$ is monotone. For this aim, let $\Phi=\left(\Phi_1,\Phi_2\right)\in \mathcal{H}$, such that $\Phi_1=\left(u,y\right)$ and $\Phi_2=\left(v,z,\theta\right),$ using the definitions \eqref{E:=(2.9)}, \eqref{E:=(2.11)}, \eqref{E:=(2.12)}, and \eqref{E:=(2.13)}, we have
\begin{equation*}
\begin{array}{lll}
\left<\mathcal{A}\Phi,\Phi\right>_{\mathcal{H}}

&=&\left<\left(v,z,-C^{-1}\left(B\Phi_2+A_1\Phi_1\right)\right),\left(\Phi_1,\Phi_2\right)\right>_{\mathcal{H}}

\nline

&=&\left<\left(v,z\right),\Phi_1\right>_{\mathbb{W}_1}-\left<C^{-1}\left(B\Phi_2+A_1\Phi_1\right),\Phi_2\right>_{{\mathbb{V}_2}}

\nline

&=&a\left(\left(v,z\right),\Phi_1\right)-\left<B\Phi_2+A_1\Phi_1,\Phi_2\right>_{\mathbb{V}_2'\times {\mathbb{V}_2}}.

\end{array}
\end{equation*}
Since $\Phi\in D(\mathcal{A})$; i.e., $\Phi_2\in  {\mathbb{W}_2}$ and $\Phi_1\in  \mathbb{W}_1$, then using \eqref{E:=(2.5)}, the last equation of \eqref{E:=(2.8)}, and second-third equations of \eqref{E:=(2.8)}  in the above equation, we obtain
\begin{equation*}
\begin{array}{lll}
\left<\mathcal{A}\Phi,\Phi\right>_{\mathcal{H}}

&=&a\left(\left(v,z\right),\Phi_1\right)-\left<B\Phi_2-A_1\Phi_1,\Phi_2\right>_{\mathbb{W}_2'\times {\mathbb{W}_2}}

\nline

&=&a\left(\left(v,z\right),\Phi_1\right)-\left<B\Phi_2,\Phi_2\right>_{\mathbb{W}_2'\times {\mathbb{W}_2}}-\left<A_0\Phi_1,\left(v,z\right)\right>_{\mathbb{W}_1'\times \mathbb{W}_1}

\nline

&=&a\left(\left(v,z\right),\Phi_1\right)-b\left(\Phi_2,\Phi_2\right)-a\left(\Phi_1,\left(v,z\right)\right).

\end{array}
\end{equation*}
Finally, taking the real parts of the above equation, then using \eqref{E:=(2.7)}, we get
\begin{equation}\label{E:=(3.1)}
\Re\left\{\left<\mathcal{A}\Phi,\Phi\right>_{\mathcal{H}}\right\}=-\kappa\int_0^{L_0}\left|\theta_x\right|^2 dx\leq 0.
\end{equation}
We next prove the maximality. For $f=\left(g,h\right)\in \mathcal{H}=\mathbb{W}_1\times {\mathbb{V}_2}$, such that $g=\left(g_1,g_2\right)$ and $h=\left(h_1,h_2,\zeta\right),$ we show the existence of $\Phi=\left(\Phi_1,\Phi_2\right)\in D\left(\mathcal{A}\right)$, such that $\Phi_1=\left(u,y\right)$ and $\Phi_2=\left(v,z,\theta\right)$, unique solution of the equation
\begin{equation}\label{E:=(3.2)}
\Phi-\mathcal{A}\Phi=F,
\end{equation}
that is
\begin{equation*}
\left\{
\begin{array}{lll}

\Phi_1-\left(v,z\right)=g,

\nline

\Phi_2+C^{-1}\left(B\Phi_2+A_1\Phi_1\right)=h.

\end{array}
\right.
\end{equation*}
Since the operator $C$  is an isomorphism of ${\mathbb{V}_2}$ onto $\mathbb{V}_2'$, then the above system is equivalent to
\begin{equation}\label{E:=(3.3)}
\left\{
\begin{array}{lll}

\Phi_1=\left(v,z\right)+g,

\nline

C\Phi_2+B\Phi_2+A_1\Phi_1=Ch.

\end{array}
\right.
\end{equation}
Inserting the first equation of  \eqref{E:=(3.3)} in the second equation, we obtain that
\begin{equation}\label{E:=(3.4)}
C\Phi_2+B\Phi_2+A_1\left(v,z\right)=Ch-A_1g.
\end{equation}
Since $h\in {\mathbb{V}_2}$, $g \in \mathbb{W}_1$, and the operator $C$ (resp. $A_0$) is an isomorphism of ${\mathbb{V}_2}$ onto $\mathbb{V}_2'$  (resp. $\mathbb{W}_1$ onto $\mathbb{W}_1'$),  then using \eqref{E:=(2.5)} and the definition of $A_1$ (see last equation of \eqref{E:=(2.8)}), we get
\begin{equation*}
R:=Ch-A_1g\in \mathbb{W}_1'\times L^2(0,L_0).
\end{equation*}
Using the above equation in \eqref{E:=(3.4)}, we get
\begin{equation}\label{E:=(3.5)}
C\Phi_2+B\Phi_2+A_1\left(v,z\right)=R\quad \text{in }\mathbb{W}_1'\times L^2(0,L_0).
\end{equation}
We define the operator $D\in \mathcal{L}\left({\mathbb{W}_2},\mathbb{W}_2'\right)$ by
\begin{equation*}
\begin{array}{lll}
\text{for }  {Z}=\left(Z_1,\theta\right),\ \hat{Z}=\left(Z_2, \hat{\theta}\right)\in {\mathbb{W}_2}, \text{ such that } Z_1=\left(v,z\right),\ \hat{Z}_2=\left(\hat{v},\hat{z}\right):

\nline

\left<D{Z},\hat{Z}\right>_{\mathbb{W}_2'\times {\mathbb{W}_2}}:=\left<CZ+BZ+A_1 Z_1,\hat{Z}\right>_{\mathbb{W}_2'\times {\mathbb{W}_2}}=c\left(Z,\hat{Z}\right)+b\left(Z,\hat{Z}\right)+a\left(Z_1,\hat{Z}_1\right).

\end{array}
\end{equation*}
From \eqref{E:=(2.7)} and \eqref{E:=(2.9)}, we have
\begin{equation*}
\Re\left\{\left<D{Z},\hat{Z}\right>_{\mathbb{W}_2'\times {\mathbb{W}_2}}\right\}=\|Z\|_{{\mathbb{V}_2}}^2+\kappa\int_0^{L_0}\left|\theta_x\right|^2 dx+\|Z_1\|_{{\mathbb{W}_1}}^2\geq \min\left(1+\kappa\right)\|Z\|_{{\mathbb{W}_2}}^2.
\end{equation*}
 So, by using Lax-Milgram lemma, for all $T\in \mathbb{W}_2'$, we get that $DZ=T$ has a unique solution $Z=\left(v,z,\theta\right)\in \mathbb{W}_2$. Consequently, since $R\in \mathbb{W}_1'\times L^2(0,L_0)\subset \mathbb{W}_1'\times H^{-1}(0,L_0)=\mathbb{W}_2'$, we get that \eqref{E:=(3.5)} has a unique solution $\Phi_2=\left(v,z,\theta\right)\in \mathbb{W}_2$. Next, we define $\Phi_1:=\left(v,z\right)+g$. Since $g,\ \left(v,z\right)  \in \mathbb{W}_1$, we get $\Phi_1 \in \mathbb{W}_1$. Consequently, $\left(\Phi_1,\Phi_2\right)\in \mathbb{W}_1\times \mathbb{W}_2$ is the unique solution of \eqref{E:=(3.3)}. In addition, since $h\in {\mathbb{V}_2}$, $\Phi_2 \in \mathbb{W}_2\subset \mathbb{V}_2$, and the operator $C$ is an isomorphism of ${\mathbb{V}_2}$ onto $\mathbb{V}_2'$, we get
\begin{equation*}
 B\Phi_2+A_1\Phi_1=Ch-C\Phi_2\in \mathbb{V}_2'.
\end{equation*}
Thus, \eqref{E:=(3.2)} has a unique solution $\Phi:=\left(\Phi_1,\Phi_2\right)\in D\left(\mathcal{A}\right),$ completing the proof of the proposition.
\end{proof}
\noindent	Thanks to  Lumer-Phillips theorem (see \cite{Liu-Zheng-1999, Pazy-1983}), we deduce that $\mathcal{A}$ generates a  $C_0$-semigroup of contractions $\left(e^{t\mathcal{A}}\right)_{t\geq 0}$ in $\mathcal{H}$ and therefore  Problem \eqref{E:=(2.14)} is well-posed. Then, we have the following result.
\begin{Theorem}\label{T:=3.2}
For any $\Phi_0\in\mathcal{H}$, the Problem \eqref{E:=(2.14)}  admits a unique weak solution
$\Phi:=e^{t\mathcal{A}}\Phi_0\in C\left(\mathbb{R}_{+};\mathcal{H}\right).$
Moreover, if $\Phi_0\in D\left(\mathcal{A}\right),$ then
$\Phi\in C\left(\mathbb{R}_{+};D\left(\mathcal{A}\right)\right) \cap C^1\left(\mathbb{R}_{+};\mathcal{H}\right).$\xqed{$\square$}
\end{Theorem}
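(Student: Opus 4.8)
The plan is to obtain Theorem \ref{T:=3.2} as a direct consequence of Proposition \ref{P:=3.1} through the classical theory of $C_0$-semigroups. Since Proposition \ref{P:=3.1} shows that $\mathcal{A}$ is m-dissipative on $\mathcal{H}$, the first step is to record that $D(\mathcal{A})$ is dense in $\mathcal{H}$; in a Hilbert space this is automatic for a maximal dissipative operator, but if one prefers an explicit argument it suffices to note that the smooth functions satisfying the boundary and transmission conditions belong to $D(\mathcal{A})$ and are dense in $\mathbb{W}_1\times\mathbb{V}_2=\mathcal{H}$. Together with the dissipativity estimate \eqref{E:=(3.1)} and the range condition $\mathrm{Range}(I-\mathcal{A})=\mathcal{H}$ established in the proof of Proposition \ref{P:=3.1}, the Lumer--Phillips theorem (see \cite{Liu-Zheng-1999, Pazy-1983}) then guarantees that $\mathcal{A}$ generates a strongly continuous semigroup of contractions $\left(e^{t\mathcal{A}}\right)_{t\geq 0}$ on $\mathcal{H}$.

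With the semigroup at hand, the two assertions of the theorem are read off from the standard regularity results for the abstract Cauchy problem $\Phi_t=\mathcal{A}\Phi$, $\Phi(0)=\Phi_0$. For an arbitrary $\Phi_0\in\mathcal{H}$, the map $t\mapsto e^{t\mathcal{A}}\Phi_0$ is by construction strongly continuous, so $\Phi:=e^{t\mathcal{A}}\Phi_0\in C(\mathbb{R}_+;\mathcal{H})$ is the unique (mild, i.e. weak) solution, uniqueness being inherited from the uniqueness of the generator and the semigroup property. If instead $\Phi_0\in D(\mathcal{A})$, the classical differentiability theorem for $C_0$-semigroups (see \cite{Liu-Zheng-1999, Pazy-1983}) yields $e^{t\mathcal{A}}\Phi_0\in D(\mathcal{A})$ for every $t\geq 0$, that $t\mapsto e^{t\mathcal{A}}\Phi_0$ is continuously differentiable with $\Phi'(t)=\mathcal{A}\Phi(t)$, and that the orbit is continuous into $D(\mathcal{A})$ equipped with the graph norm, i.e. $\Phi\in C(\mathbb{R}_+;D(\mathcal{A}))\cap C^1(\mathbb{R}_+;\mathcal{H})$, which is precisely the stated conclusion.

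There is no genuine analytical difficulty in this proof: all the substantive work has been carried out in Proposition \ref{P:=3.1}, and Theorem \ref{T:=3.2} is a verbatim application of the Lumer--Phillips and Hille--Yosida machinery. The only point meriting a line of justification is the density of $D(\mathcal{A})$ in $\mathcal{H}$, which the Lumer--Phillips theorem requires and which is handled either by the general Hilbert-space fact quoted above or by exhibiting the explicit dense subspace; everything else is bookkeeping.
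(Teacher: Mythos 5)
Your proposal is correct and follows exactly the route the paper takes: the paper deduces Theorem \ref{T:=3.2} directly from Proposition \ref{P:=3.1} via the Lumer--Phillips theorem and the standard regularity theory for $C_0$-semigroups, offering no further argument. Your additional remark on the density of $D(\mathcal{A})$ (automatic for a maximal dissipative operator on a Hilbert space) is a point the paper leaves implicit, but it does not change the approach.
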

\subsection{Strong stability of the system}\label{S:=3.2}
Our main result in this part is the following theorem.
\begin{Theorem}\label{T:=3.3}
The semigroup of contractions $\left(e^{t\mathcal{A}}\right)_{t\geq0}$ is strongly stable on ${\mathcal{H}}$ in the sense that
\begin{equation*}
\displaystyle{\lim_{t\to +\infty}\|e^{t{\mathcal{A}}}\Phi_0\|_{{\mathcal{H}}}=0},\quad \forall\;\Phi_0\in {\mathcal{H}}.
\end{equation*}
\end{Theorem}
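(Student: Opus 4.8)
The plan is to apply the Arendt--Batty criterion: since $\mathcal{A}$ generates a $C_0$-semigroup of contractions on $\mathcal{H}$ and (as will be shown, or is already claimed in the paper's outline) $\mathcal{A}$ has compact resolvent, the spectrum $\sigma(\mathcal{A})$ on the imaginary axis consists only of eigenvalues. Hence strong stability follows once we show that $\mathcal{A}$ has no eigenvalues on the imaginary axis, i.e. $i\mathbb{R}\cap\sigma_p(\mathcal{A})=\emptyset$. (The point $\lambda=0$ is already excluded because $0\in\rho(\mathcal{A})$ by the maximality argument in Proposition \ref{P:=3.1}, applied with $\lambda=0$; alternatively $0$ is handled in the same way as below.) So the whole proof reduces to the eigenvalue computation.

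The key step: fix $\lambda\in\mathbb{R}$, $\lambda\neq 0$, and suppose $\Phi=(u,y,v,z,\theta)\in D(\mathcal{A})$ satisfies $\mathcal{A}\Phi=i\lambda\Phi$. Taking the real part of $\langle\mathcal{A}\Phi,\Phi\rangle_{\mathcal{H}}=i\lambda\|\Phi\|_{\mathcal{H}}^2$ and using the dissipation identity \eqref{E:=(3.1)}, we get $\kappa\int_0^{L_0}|\theta_x|^2\,dx=0$, hence $\theta_x\equiv 0$ on $(0,L_0)$, and since $\theta(0)=0$ (indeed $\theta\in H^1_0(0,L_0)$) we conclude $\theta\equiv 0$ on $(0,L_0)$. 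Now unwinding $\mathcal{A}\Phi=i\lambda\Phi$ componentwise gives $v=i\lambda u$, $z=i\lambda y$, and the second-order-in-time equations \eqref{E:=(1.1)}--\eqref{E:=(1.2)} become, with $\theta=0$,
\begin{equation*}
-\lambda^2\rho_1 u+\lambda^2\alpha_1 u_{xx}+\beta_1 u_{xxxx}=0 \text{ on }(0,L_0),\qquad
-\lambda^2\rho_2 y+\lambda^2\alpha_2 y_{xx}+\beta_2 y_{xxxx}=0 \text{ on }(L_0,L),
\end{equation*}
together with the boundary conditions $u(0)=u_x(0)=0$, $y(L)=y_x(L)=0$, the transmission conditions $u(L_0)=y(L_0)$, $u_x(L_0)=y_x(L_0)$, $\beta_1 u_{xx}(L_0)=\beta_2 y_{xx}(L_0)$, and \eqref{E:=(1.10)} which, with $\theta=0$ and $u_{xtt}=-\lambda^2 u_x$, becomes $\beta_1 u_{xxx}(L_0)+\lambda^2\alpha_1 u_x(L_0)-\beta_2 y_{xxx}(L_0)-\lambda^2\alpha_2 y_x(L_0)=0$. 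Additionally, from equation \eqref{E:=(1.3)} with $\theta=0$ we get the crucial extra condition $\gamma\, u_{xxt}=0$, i.e. $u_{xxx}=0$ on $(0,L_0)$ (since $u_{xxt}=i\lambda u_{xx}$, $\lambda\neq 0$, $\gamma\neq 0$), hence $u_{xx}$ is constant on $(0,L_0)$.

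From $u_{xxx}\equiv 0$ and the ODE for $u$ we get $-\lambda^2\rho_1 u+\lambda^2\alpha_1 u_{xx}=0$, so $u$ is itself a constant multiple of $u_{xx}$, forcing $u$ to be a polynomial of degree $\le 2$; combined with $u(0)=u_x(0)=0$ this gives $u=c x^2$ for a constant $c$, and then $u_{xx}=2c$ must equal $\frac{\rho_1}{\alpha_1}u = \frac{\rho_1}{\alpha_1}cx^2$ for all $x$, which forces $c=0$, hence $u\equiv 0$ on $(0,L_0)$. Then the transmission conditions at $L_0$ give $y(L_0)=y_x(L_0)=0$ and $y_{xx}(L_0)=0$, and from \eqref{E:=(1.10)} (now reading $-\beta_2 y_{xxx}(L_0)-\lambda^2\alpha_2 y_x(L_0)=0$, but $y_x(L_0)=0$) we get $y_{xxx}(L_0)=0$. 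Thus $y$ solves the fourth-order ODE on $(L_0,L)$ with zero Cauchy data at $L_0$, so by uniqueness for linear ODEs $y\equiv 0$ on $(L_0,L)$. Therefore $\Phi\equiv 0$, contradicting that it is an eigenvector; hence $i\mathbb{R}\subset\rho(\mathcal{A})$ and strong stability follows from Arendt--Batty.

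The main obstacle is the ODE argument establishing $u\equiv 0$: one must correctly exploit the \emph{extra} scalar relation coming from the heat equation (that $u_{xx}$ is forced to be both constant and proportional to $u$), since without it the homogeneous Rayleigh-beam eigenvalue problem on a transmission configuration generically does have nontrivial solutions for a discrete set of $\lambda$. One should also double-check that the regularity of elements of $D(\mathcal{A})$ is high enough to make each pointwise trace (up to $u_{xxx}(L_0)$, $y_{xxx}(L_0)$) meaningful; this follows by elliptic regularity/bootstrapping once $\theta=0$ reduces the first two equations to $u_{xxxx}\in L^2$, $y_{xxxx}\in L^2$. A minor point is the case $\lambda=0$, which is already covered since $0\in\rho(\mathcal{A})$ by Proposition \ref{P:=3.1}.
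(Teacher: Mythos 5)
Your overall strategy is exactly the paper's: Arendt--Batty combined with compactness of the resolvent (so that $\sigma(\mathcal{A})\cap i\mathbb{R}$ consists only of eigenvalues) reduces everything to showing $\ker(i\lambda I-\mathcal{A})=\{0\}$, and within that you follow the same chain $\theta\equiv 0\Rightarrow u\equiv 0\Rightarrow y\equiv 0$. Two remarks on the differences. First, a small slip that actually works in your favour: from \eqref{E:=(1.3)} with $\theta\equiv 0$ you get $\gamma u_{xxt}=i\lambda\gamma u_{xx}=0$, hence $u_{xx}\equiv 0$ directly, not merely $u_{xxx}\equiv 0$; combined with $u(0)=u_x(0)=0$ this gives $u\equiv 0$ at once, and your detour through the ODE and the quadratic polynomial is unnecessary (though not wrong, since $u_{xx}\equiv 0$ does imply $u_{xxx}\equiv 0$). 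The paper reaches the same $u_{xx}=0$ from the variational identity \eqref{E:=(3.28)} by testing against $\psi\in C^{\infty}_c(0,L_0)$, which is just the weak-form version of your observation. Second, for the $y$-part the paper does not invoke ODE uniqueness: it first bootstraps ($y_{xxx}\in H^1(L_0,L)$, $y_{xxx}(L_0)=0$) from the weak formulation and then kills $y$ via the multiplier $2(x-L)\overline{y_{xxx}}$, which yields $-\beta_2\int|y_{xxx}|^2-\alpha_2\lambda^2\int|y_{xx}|^2-3\rho_2\lambda^2\int|y_x|^2=0$. Your appeal to uniqueness for the linear constant-coefficient ODE with zero Cauchy data at $L_0$ is more elementary and equally valid (distributional solutions of such ODEs are smooth, so the regularity concern you raise is harmless), whereas the paper's multiplier identity stays entirely within the energy/weak framework; both approaches require the trace and regularity facts that the paper isolates in Lemma \ref{L:=3.4}, which you correctly flagged as the point needing verification.
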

\noindent For the proof of Theorem \ref{T:=3.3}: First we will prove that the
the operator ${\mathcal{A}}$  has a compact resolvent on the energy space $\mathcal{H}$. Then, we will establish that ${\mathcal{A}}$  has no eigenvalues on the imaginary axis. The proof for Theorem \ref{T:=3.3} relies on the subsequent lemmas.
\begin{Lemma}\label{L:=3.4}
 Let $\Phi=\left(\Phi_1,\Phi_2\right)\in D\left(\mathcal{A}\right)$, such that $\Phi_1=\left(u,y\right),$ and $\Phi_2=\left(v,z,\theta\right)$. Then, we have
\begin{eqnarray}
\left(v,z\right)\in \mathbb{W}_1,\label{E:=(3.6)}\nline
\theta\in H^2(0,L_0)\cap H^1_0(0,L_0),\label{E:=(3.7)}\nline
\left(u,y\right)\in \mathbb{W}_1\cap \left(H^3\left(0,L_0\right)\times H^3\left(L_0,L\right)\right)\label{E:=(3.8)},\nline
 \beta_1 u_{xx}(L_0)=\beta_2 y_{xx}(L_0).\label{E:=(3.9)}
\end{eqnarray}
In particular, the resolvent $\left(I-\mathcal{A}\right)^{-1}$ of $\mathcal{A}$ is compact on the energy space $\mathcal{H}$.
\end{Lemma}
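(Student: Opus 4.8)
The plan is to translate the abstract membership $\Phi\in D(\mathcal{A})$ into classical regularity plus the transmission condition at $L_0$ by testing the operator identity against well‑chosen elements of $\mathbb{W}_2$, and then to deduce compactness of the resolvent from the Rellich theorem. Write $\Phi_1=(u,y)$, $\Phi_2=(v,z,\theta)$, and set $(p,q,r):=C^{-1}\left(B\Phi_2+A_1\Phi_1\right)$; since $\Phi\in D(\mathcal{A})$ one has $(p,q,r)\in\mathbb{V}_2=\mathbb{V}_1\times L^2(0,L_0)$ and, by \eqref{E:=(2.8)}--\eqref{E:=(2.9)},
\begin{equation*}
c\big((p,q,r),\hat\Phi\big)=b\big(\Phi_2,\hat\Phi\big)+a\big(\Phi_1,\hat\Phi_1\big),\qquad\forall\,\hat\Phi=(\hat u,\hat y,\hat\theta)\in\mathbb{W}_2.
\end{equation*}
Property \eqref{E:=(3.6)} is immediate because $\Phi_2\in\mathbb{W}_2=\mathbb{W}_1\times H^1_0(0,L_0)$. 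For \eqref{E:=(3.7)} I would take $\hat u=\hat y=0$ and $\hat\theta\in\mathcal{D}(0,L_0)$, which reduces the identity to $\rho_0\langle r,\hat\theta\rangle=\gamma\langle v_x,\hat\theta_x\rangle+\kappa\langle\theta_x,\hat\theta_x\rangle$, i.e.\ $\kappa\theta_{xx}=-\gamma v_{xx}-\rho_0 r$ in $\mathcal{D}'(0,L_0)$; as $v\in\mathbb{W}_1\subset H^2(0,L_0)$ and $r\in L^2(0,L_0)$, the right‑hand side lies in $L^2(0,L_0)$, so $\theta\in H^2(0,L_0)$, which with $\theta\in H^1_0(0,L_0)$ gives \eqref{E:=(3.7)}.

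For \eqref{E:=(3.8)} I would next take $\hat y=\hat\theta=0$ and $\hat u\in\mathcal{D}(0,L_0)$ (so that $(\hat u,0)\in\mathbb{W}_1$), obtaining $\rho_1 p-\alpha_1 p_{xx}=\gamma\theta_{xx}+\beta_1 u_{xxxx}$ in $\mathcal{D}'(0,L_0)$; since $p\in H^1(0,L_0)$ and $\theta_{xx}\in L^2(0,L_0)$, this forces $u_{xxxx}\in H^{-1}(0,L_0)$, and combining it with $u\in H^2(0,L_0)$ via the elementary one‑dimensional fact that $w\in L^2$ together with $w_{xx}\in H^{-1}$ imply $w_x\in L^2$ (applied to $w=u_{xx}$) yields $u\in H^3(0,L_0)$. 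Symmetrically, $\hat u=\hat\theta=0$ and $\hat y\in\mathcal{D}(L_0,L)$ give $\rho_2 q-\alpha_2 q_{xx}=\beta_2 y_{xxxx}$ in $\mathcal{D}'(L_0,L)$, hence $y\in H^3(L_0,L)$.

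The transmission condition \eqref{E:=(3.9)} is the heart of the matter. Here I would take an arbitrary $(\hat u,\hat y)\in\mathbb{W}_1$ with $\hat\theta=0$; now that $u,y\in H^3$ and $\theta\in H^2$, integration by parts in the expanded identity is legitimate, the boundary terms at $0$ and $L$ drop out thanks to $\hat u(0)=\hat u_x(0)=\hat y(L)=\hat y_x(L)=0$, and, using $\hat u(L_0)=\hat y(L_0)$ and $\hat u_x(L_0)=\hat y_x(L_0)$, the identity collapses to
\begin{equation*}
\big(\beta_1 u_{xx}(L_0)-\beta_2 y_{xx}(L_0)\big)\,\overline{\hat u_x(L_0)}-\gamma\,\theta_x(L_0)\,\overline{\hat u(L_0)}=\Lambda(\hat u,\hat y),
\end{equation*}
where $\Lambda$ collects only $L^2$‑pairings of $p,p_x,q,q_x,u_{xxx},y_{xxx},\theta_{xx}$ against $\hat u,\hat u_x,\hat y,\hat y_x$, hence is continuous for the $H^1(0,L_0)\times H^1(L_0,L)$ topology. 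Testing with a sequence $(\hat u_n,\hat y_n)\in\mathbb{W}_1$ supported in a shrinking neighbourhood of $L_0$, normalized by $\hat u_n(L_0)=\hat y_n(L_0)=0$, $\hat u_{n,x}(L_0)=\hat y_{n,x}(L_0)=1$, and with $\|(\hat u_n,\hat y_n)\|_{H^1\times H^1}\to 0$ (a rescaled fixed profile), the right‑hand side tends to $0$ while the left‑hand side stays equal to $\beta_1 u_{xx}(L_0)-\beta_2 y_{xx}(L_0)$, which yields \eqref{E:=(3.9)}. I expect this to be the main obstacle: one must disentangle the $\hat u_x(L_0)$‑contribution from the $\hat u(L_0)$‑one, and since $\hat u_x(L_0)$ is not controlled by $\|\hat u\|_{H^1}$ a localized test‑function argument seems unavoidable; one must also be careful not to integrate by parts once too often, because $u_{xxxx}$ only lives in $H^{-1}$ (so one gets $H^3$, not $H^4$, regularity), and likewise $p,q$ are only $H^1$.

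For the compactness statement, the three steps above show (via the closed graph theorem, or by keeping track of the constants) that $D(\mathcal{A})$ equipped with the graph norm embeds continuously into $\big(H^3(0,L_0)\times H^3(L_0,L)\big)\times\big(H^2(0,L_0)\times H^2(L_0,L)\big)\times\big(H^2(0,L_0)\cap H^1_0(0,L_0)\big)$, while $\|(v,z)\|_{H^2(0,L_0)\times H^2(L_0,L)}=\|(v,z)\|_{\mathbb{W}_1}\le\|\mathcal{A}\Phi\|_{\mathcal{H}}$ takes care of the velocity components; this last space embeds compactly into $\mathcal{H}$ by the Rellich--Kondrachov theorem. Since $\mathcal{A}$ is m‑dissipative by Proposition \ref{P:=3.1}, $(I-\mathcal{A})^{-1}$ maps $\mathcal{H}$ boundedly into $D(\mathcal{A})$ endowed with the graph norm, so the composition $(I-\mathcal{A})^{-1}\colon\mathcal{H}\to\mathcal{H}$ is compact, which finishes the proof.
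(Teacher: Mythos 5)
Your argument is correct, and while it follows the same overall skeleton as the paper (test the variational identity coming from $\Phi\in D(\mathcal{A})$ against suitable subspaces of $\mathbb{W}_2$, read off interior regularity, then extract the transmission condition from the surviving boundary terms), the technical implementation of the two key steps is genuinely different. For \eqref{E:=(3.8)} the paper substitutes the primitive-based test functions \eqref{E:=(3.15)} built from arbitrary $(\hat\nu,\hat\mu)$, reducing the fourth-order identity to the first-order identity \eqref{E:=(3.22)} for $u_{xx},y_{xx}$ against $\hat\nu_x,\hat\mu_x$, and concludes $u_{xx},y_{xx}\in H^1$ from the definition of $H^1$; you instead read the distributional equation $\beta_1u_{xxxx}=\rho_1p-\alpha_1p_{xx}-\gamma\theta_{xx}\in H^{-1}$ directly and invoke the elementary one-dimensional fact that $w\in L^2$ with $w_{xx}\in H^{-1}$ forces $w_x\in L^2$ -- a shorter route that buys the same $H^3$ regularity with less bookkeeping. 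For \eqref{E:=(3.9)} the paper's primitive construction automatically decouples the trace $\hat u_x(L_0)$ (it becomes $\hat\nu(L_0)$, which can be prescribed independently of the $L^2$ data), whereas you achieve the decoupling with a concentrating rescaled family $\hat u_n(x)=n^{-1}\chi(n(x-L_0))$ normalized by $\hat u_{n,x}(L_0)=1$, $\hat u_n(L_0)=0$, $\|\hat u_n\|_{H^1}\to0$; this is a clean and fully rigorous alternative, and your cautionary remarks (only one integration by parts is allowed on the $\beta_i$-terms since $u_{xxxx}$ lives merely in $H^{-1}$, and $\hat u_x(L_0)$ is not controlled by $\|\hat u\|_{H^1}$) correctly identify exactly the points where a naive argument would break. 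Your treatment of the compactness claim is, if anything, more complete than the paper's one-line conclusion, since you explicitly handle the velocity components via $\|(v,z)\|_{\mathbb{W}_1}\le\|\mathcal{A}\Phi\|_{\mathcal{H}}$ and then apply Rellich--Kondrachov componentwise.
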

\begin{proof} The proof is divided into 3 steps.\\[0.1in]
$\bullet$ \textbf{Step 1.} In this step, we write the variational problem and we prove \eqref{E:=(3.6)}. For this aim, let $f=\left(g,h\right)\in \mathcal{H}=\mathbb{W}_1\times {\mathbb{V}_2}$ and $\Phi=\left(\Phi_1,\Phi_2\right)\in D\left(\mathcal{A}\right)$, such that
\begin{equation}\label{E:=(3.10)}
\mathcal{A}\Phi= f,
\end{equation}
where $g=\left(g_1,g_2\right)$, $h=\left(h_1,h_2,h_3\right),$ $\Phi_1=\left(u,y\right),$ and $\Phi_2=\left(v,z,\theta\right)$. Equation \eqref{E:=(3.10)} is equivalent to
\begin{eqnarray*}
\left(v,z\right)=\left(g_1,g_2\right)\in \mathbb{W}_1,
\nline
B\Phi_2+A_1\Phi_1=-Ch\in \mathbb{V}_2' \subset \mathbb{W}_2'.
\end{eqnarray*}
From the first equation of the above system, we obtain \eqref{E:=(3.6)}. For all $Z=\left(\phi,\varphi,\psi\right)\in \mathbb{W}_2$, using the above equation, \eqref{E:=(2.5)}, and  \eqref{E:=(2.8)},  one gets
\begin{equation*}
\begin{array}{lll}

\left<B\Phi_2+A_1\Phi_1,Z\right>_{\mathbb{V}_2'\times {\mathbb{V}_2}}&=&-\left<C h,Z\right>_{\mathbb{V}_2'\times {\mathbb{V}_2}},

\nline

\left<B\Phi_2+A_1\Phi_1,Z\right>_{\mathbb{W}_2'\times {\mathbb{W}_2}}&=&-c\left(h,Z\right),

\nline

b\left(\Phi_2,Z\right)+a\left(\Phi_1,\left(\phi,\varphi\right)\right)&=&-c\left(h,Z\right).
\end{array}
\end{equation*}
Using  \eqref{E:=(2.6)} in the above equation, we obtain that for all $\left(\phi,\varphi,\psi\right)\in \mathbb{W}_2$:
\begin{equation}\label{E:=(3.11)}
\begin{array}{lll}

\beta_1\int_0^{L_0}u_{xx}\overline{\phi_{xx}}\, dx+\beta_2\int_{L_0}^Ly_{xx}\overline{\varphi_{xx}}\, dx
-\gamma\int_0^{L_0}\theta_{x}\overline{{\phi}_x}\, dx+\kappa \int_0^{L_0}\theta_{x}\overline{\psi_x}\, dx=-\rho_0\int_0^{L_0} h_3 \overline{\psi}\, dx

\nline

-\gamma\int_0^{L_0}(g_1)_{x}\overline{{\psi}_x}\, dx-\int_0^{L_0}\left(\rho_1h_1\overline{\phi}+\alpha_1\, (h_1)_x\overline{\phi_x}\right)dx-\int_{L_0}^L\left(\rho_2h_2\overline{\varphi}+\alpha_2\, (h_2)_x\overline{\varphi_x}\right)dx.

\end{array}
\end{equation}
$\bullet$ \textbf{Step 2.} In this step, we prove \eqref{E:=(3.7)} and
\begin{equation}\label{E:=(3.12)}
\theta(x)=-\frac{\gamma}{\kappa} g_1(x)+\frac{ \rho_0}{\kappa}\int_0^{x}\int_0^{x_1} h_3(x_2)\, dx_2\, dx_1,\quad \forall x\in (0,L_0).
\end{equation}
For this aim,  setting $\phi=0,\ \varphi=0,$ and $\psi\in H^1_0\left(0,L_0\right)$ in \eqref{E:=(3.11)},  we obtain
\begin{equation}\label{E:=(3.13)}
\kappa \int_0^{L_0}\theta_{x}\overline{\psi}_x\, dx=-\int_0^{L_0}\left(\gamma\, (g_1)_{x} \overline{\psi_x}+\rho_0 h_3 \overline{\psi}\right)   dx\quad \forall \psi\in H^1_0\left(0,L_0\right).
\end{equation}
The left hand side of \eqref{E:=(3.13)} is a bilinear continuous coercive form on $H^1_0\left(0,L_0\right)\times H^1_0\left(0,L_0\right)$,  while  the right hand side is a linear continuous form on $H^1_0\left(0,L_0\right)$. Then, using Lax-Milgram lemma, we deduce that there exists unique  $\theta\in H^1_0\left(0,L_0\right)$  solution of the variational problem \eqref{E:=(3.13)}. Applying classical regularity arguments, we infer that $\theta\in H^1_0\left(0,L_0\right)\cap H^2(0,L_0)$, hence we get \eqref{E:=(3.7)}. Consequently, setting $\psi\in C^{\infty}_c\left(0,L_0\right)\subset H^1_0\left(0,L_0\right)$ in \eqref{E:=(3.13)}, then using integration by parts,  we obtain
\begin{equation*}
 \int_0^{L_0}\left(-\kappa\theta_{xx}-\gamma\, (g_1)_{xx}+\rho_0 h_3 \right) \overline{\psi}\,  dx=0,\quad \forall\; \psi\in C^{\infty}_c\left(0,L_0\right).
\end{equation*}
Thus, by applying Corollary 4.24 in \cite{Brezis-2010}, we get
\begin{equation*}
\kappa \theta_{xx}=-\gamma\, (g_1)_{xx}+\rho_0 h_3\in L^2(0,L_0),\quad \text{a.e. } x\in (0,L_0).
\end{equation*}
 Solving the above differential equation (taking into consideration that $(g_1,g_2)\in \mathbb{W}_1$; i.e., $g_1(0)=(g_1)_x(0)=0$), we obtain \eqref{E:=(3.12)}.\\[0.1in]
$\bullet$ \textbf{Step 3.} In this step, we prove \eqref{E:=(3.8)} and \eqref{E:=(3.9)}. For this aim, setting $\left(\phi, \varphi\right)\in  \mathbb{W}_1,$ and $\psi=0$ in \eqref{E:=(3.11)},  then using \eqref{E:=(3.12)}, we obtain
\begin{equation}\label{E:=(3.14)}
\begin{array}{lll}

\beta_1\int_0^{L_0}u_{xx}\overline{\phi_{xx}}\, dx+\beta_2\int_{L_0}^Ly_{xx}\overline{\varphi_{xx}}\, dx=

-\kappa^{-1}\int_0^{L_0}\left(\gamma\, (g_1)_x-\rho_0\int_0^x h(x_2)\, dx_2\right)\overline{{\phi}_x}\, dx

\nline

-\int_0^{L_0}\left(\rho_1h_1\overline{\phi}+\alpha_1\, (h_1)_x\overline{\phi_x}\right)dx

-\int_{L_0}^L\left(\rho_2h_2\overline{\varphi}+\alpha_2\, (h_2)_x\overline{\varphi_x}\right)dx,\ \ \forall \left(\phi,\varphi\right)\in \mathbb{W}_1.

\end{array}
\end{equation}
The left hand side of \eqref{E:=(3.14)} is a bilinear continuous coercive form on $\mathbb{W}_1\times \mathbb{W}_1$,  while  the right hand side is a linear continuous form on $\mathbb{W}_1$. Then, using Lax-Milgram lemma, we deduce that there exists unique  $\left(u,y\right)\in \mathbb{W}_1$  solution of the variational problem \eqref{E:=(3.14)}. Now, fix $\left(\nu,\mu\right)\in C^{\infty}_c\left(0,L_0\right)\times C^{\infty}_c\left(L_0,L\right)$ such that
\begin{equation*}
\int_0^{L_0}\nu\, dx=\int_0^{L_0}\mu\, dx=1.
\end{equation*}
For any function $\left(\hat{\nu},\hat{\mu}\right)\in \mathbb{W}_1,$ we define
\begin{equation}\label{E:=(3.15)}
\left\{
\begin{array}{lll}

\phi(x)=\int_0^x\left(\hat{\nu}(x_1)-\left(\int_0^{L_0}\hat{\nu}(x_2)\, dx_2\right)\nu(x_1)  \right)  dx_1,\quad \forall x\in (0,L_0),

\nline

\varphi(x)=-\int_{x}^L\left(\hat{\mu}(x_1)-\left(\int_{L_0}^L\hat{\mu}(x_2)\, dx_2\right)\mu(x_1)  \right)  dx_1,\quad \forall (L_0,L).

\end{array}
\right.
\end{equation}
Indeed, the function $\left(\phi,\varphi\right)\in C^{1}\left(0,L_0\right)\times C^{1}\left(L_0,L\right)$  and
\begin{equation*}
\phi(0)=\phi(L_0)=\varphi(L_0)=\varphi(L)=\phi_x(0)=\phi_x(L)=0\quad \text{and}\quad \varphi_x(L_0)=\varphi_x(L_0)=\hat{\nu}(L_0)=\hat{\mu}(L_0).
\end{equation*}
Thus $\left(\phi,\varphi\right)\in  \mathbb{W}_1$, and consequently, by substituting \eqref{E:=(3.15)} in \eqref{E:=(3.14)},  we derive
\begin{equation}\label{E:=(3.16)}
\begin{array}{lll}

\beta_1\int_0^{L_0}u_{xx}\overline{\hat{\nu}}_x\, dx-\beta_1 \int_0^{L_0}\overline{\hat{\nu}}\,  dx\, \int_0^{L_0}u_{xx} \overline{\nu}_x\,  dx+\beta_2\int_{L_0}^L y_{xx}\overline{\hat{\mu}}_x\, dx-\beta_2 \int_{L_0}^L\overline{\hat{\mu}}\,  dx\, \int_{L_0}^Ly_{xx} \overline{\mu}_x\,  dx=

\nline

-\kappa^{-1}\int_0^{L_0}\left(\gamma\, (g_1)_x-\rho_0\int_0^x h(x_2)\, dx_2\right)\left(\overline{\hat{\nu}}-\left(\int_0^{L_0}\overline{\hat{\nu}}(x_2)\, dx_2\right)\overline{\nu}\right) dx

\nline

-\rho_1\int_0^{L_0}h_1 \left[\int_0^x\left(\overline{\hat{\nu}}(x_1)-\left(\int_0^{L_0}\overline{\hat{\nu}}(x_2)\, dx_2\right)\overline{\nu}(x_1)  \right)  dx_1\right]  dx

 \nline

-\alpha_1\int_0^{L_0} (h_1)_x \left(\overline{\hat{\nu}}-\left(\int_0^{L_0}\overline{\hat{\nu}}(x_2)\, dx_2\right)\overline{\nu}\right) dx

\nline

+\rho_2\int_{L_0}^Lh_2\left[\int_{x}^L\left(\overline{\hat{\mu}}(x_1)-\left(\int_{L_0}^L\overline{\hat{\mu}}(x_2)\, dx_2\right)\overline{\mu}(x_1)  \right)  dx_1\right] dx

\nline

-\alpha_2\int_{L_0}^L (h_2)_x\left(\overline{\hat{\mu}}-\left(\int_{L_0}^L\overline{\hat{\mu}}(x_2)\, dx_2\right)\overline{\,u}\right) dx.

\end{array}
\end{equation}
 We have
\begin{equation*}
\begin{array}{lll}
-\rho_1\int_0^{L_0}h_1 \left[\int_0^x\left(\overline{\hat{\nu}}(x_1)-\left(\int_0^{L_0}\overline{\hat{\nu}}(x_2)\, dx_2\right)\overline{\nu}(x_1)  \right)  dx_1\right]  dx =

\nline

-\rho_1\int_0^{L_0}\left(\int_0^x h_1(x_1)\, dx_1\right)_x \left(\int_0^x\overline{\hat{\nu}}(x_1)  dx_1\right)  dx+\int_0^{L_0} \left[\int_0^{L_0}h_1(x_1) \left( \int_0^{x_1}\overline{\nu}(x_2)\,  dx_2\right)  dx_1\right] \overline{\hat{\nu}}\, dx.

\end{array}
\end{equation*}
In the above equation, for the first term, using integration by parts, we get
\begin{equation}\label{E:=(3.17)}
\begin{array}{lll}
-\rho_1\int_0^{L_0}h_1 \left[\int_0^x\left(\overline{\hat{\nu}}(x_1)-\left(\int_0^{L_0}\overline{\hat{\nu}}(x_2)\, dx_2\right)\overline{\nu}(x_1)  \right)  dx_1\right]  dx =

\nline

\rho_1\int_0^{L_0}\left[\int_0^x h_1(x_1)\, dx_1-\int_0^{L_0} h_1(x_1)\, dx_1+\int_0^{L_0}h_1(x_1) \left( \int_0^{x_1}\overline{\nu}(x_2)\,  dx_2\right)  dx_1\right] \overline{\hat{\nu}}\,  dx.

\end{array}
\end{equation}
By the same way, using integration by parts, we get
\begin{equation}\label{E:=(3.18)}
\begin{array}{lll}
\rho_2\int_{L_0}^Lh_2\left[\int_{x}^L\left(\overline{\hat{\mu}}(x_1)-\left(\int_{L_0}^L\overline{\hat{\mu}}(x_2)\, dx_2\right)\overline{\mu}(x_1)  \right)  dx_1\right] dx =

\nline

-\rho_2\int_{L_0}^L\left[\int_x^L  h_2(x_1)\, dx_1-\int_{L_0}^L  h_2(x_1)\, dx_1+\int_{L_0}^{L} h_2(x_1) \left(\int_{x_1}^L\overline{\mu}(x_2)\,  dx_2\right) dx_1\right] \overline{\hat{\mu}}\,  dx.

\end{array}
\end{equation}
Next, we have
\begin{equation}\label{E:=(3.19)}
\begin{array}{lll}

-\kappa^{-1}\int_0^{L_0}\left(\gamma\, (g_1)_x-\rho_0\int_0^x h(x_2)\, dx_2\right)\left(\overline{\hat{\nu}}-\left(\int_0^{L_0}\overline{\hat{\nu}}(x_2)\, dx_2\right)\overline{\nu}\right) dx

\nline

=-\kappa^{-1}\int_0^{L_0}\left[\gamma\, (g_1)_x-\rho_0\int_0^x h(x_2)\, dx_2-\int_0^{L_0}\left(\gamma\,(g_1)_x-\rho_0\int_0^x h(x_2)\, dx_2\right) \overline{\nu}\,  dx\right]\overline{\hat{\nu}}\, dx,

\end{array}
\end{equation}
\begin{equation}\label{E:=(3.20)}
-\alpha_1\int_0^{L_0} (h_1)_x \left(\overline{\hat{\nu}}-\left(\int_0^{L_0}\overline{\hat{\nu}}(x_2)\, dx_2\right)\overline{\nu}\right) dx=-\alpha_1\int_0^{L_0}\left[(h_1)_x-\int_0^{L_0} (h_1)_x \overline{\nu}\, dx\right]\overline{\hat{\nu}}\, dx,
\end{equation}
and
\begin{equation}\label{E:=(3.21)}
-\alpha_2\int_{L_0}^L (h_2)_x\left(\overline{\hat{\mu}}-\left(\int_{L_0}^L\overline{\hat{\mu}}(x_2)\, dx_2\right)\overline{\,u}\right) dx=-\alpha_2\int_{L_0}^L\left[(h_2)_x-\int_{L_0}^L (h_2)_x \overline{\mu}\, dx\right]\overline{\hat{\mu}}\, dx.
\end{equation}
Replacing \eqref{E:=(3.17)}-\eqref{E:=(3.21)} in \eqref{E:=(3.16)}, we obtain
\begin{equation}\label{E:=(3.22)}
\beta_1\int_0^{L_0}u_{xx}\overline{\hat{\nu}}_x\, dx+\beta_2\int_{L_0}^L y_{xx}\overline{\hat{\mu}}_x\, dx=\int_0^{L_0}\chi_1 \overline{\hat{\nu}}\, dx+\int_{L_0}^{L}\chi_2 \overline{\hat{\nu}}\, dx,\quad \forall\; \left(\hat{\nu},\hat{\mu}\right)\in \mathbb{W}_1,
\end{equation}
where
\begin{equation*}
\begin{array}{llll}

\chi_1(x)=-\frac{\gamma}{\kappa}(g_1)_x(x)-\alpha_1\, (h_1)_x(x)+\frac{\rho_0}{\kappa}\int_0^x h(x_2)\, dx_2+\rho_1\int_0^x h_1(x_1)\, dx_1

\nline

+ \beta_1\int_0^{L_0}u_{xx} \overline{\nu}_x\,  dx

-\rho_1\int_0^{L_0} h_1(x_1)\, dx_1+\rho_1\int_0^{L_0}h_1(x_1) \left( \int_0^{x_1}\overline{\nu}(x_2)\,  dx_2\right)  dx_1

\nline

+\frac{1}{\kappa}\int_0^{L_0}\left(\gamma\, (g_1)_x-\rho_0\int_0^x h(x_2)\, dx_2\right) \overline{\nu}\,  dx+\alpha_1 \int_0^{L_0} (h_1)_x \overline{\nu}\, dx\in L^2(0,L_0)

\end{array}
\end{equation*}
and
\begin{equation*}
\begin{array}{llll}

\chi_2(x)=-\alpha_2\, (h_2)_x(x)-\rho_2\int_x^L  h_2(x_1)\, dx_1 +\beta_2\int_{L_0}^Ly_{xx} \overline{\mu}_x\,  dx+\rho_2\int_{L_0}^L  h_2(x_1)\, dx_1

\nline

 -\rho_2\int_{L_0}^{L} h_2(x_1) \left(\int_{x_1}^L\overline{\mu}(x_2)\,  dx_2\right) dx_1+\alpha_2\int_{L_0}^L (h_2)_x \overline{\mu}\, dx\in L^2(L_0,L).

\end{array}
\end{equation*}
Taking $\left(\hat{\nu},\hat{\mu}\right)\in  C^{1}_c\left(0,L_0\right)\times C^{1}_c\left(L_0,L\right)\subset  \mathbb{W}_1$ in \eqref{E:=(3.22)}, we get that
\begin{equation}\label{E:=(3.23)}
\begin{array}{lll}

\forall \left(\hat{\nu},\hat{\mu}\right)\in C^{1}_c\left(0,L_0\right)\times C^{1}_c\left(L_0,L\right):

\nline

\beta_1\int_0^{L_0}u_{xx}\overline{\hat{\nu}}_x\, dx+\beta_2\int_{L_0}^L y_{xx}\overline{\hat{\mu}}_x\, dx=\int_0^{L_0}\chi_1 \overline{\hat{\nu}}\, dx+\int_{L_0}^{L}\chi_2 \overline{\hat{\nu}}\, dx,

\end{array}
\end{equation}
thus, by using the definition of $H^1$ (see page 202 in \cite{Brezis-2010}), we get $\left(u_{xx},y_{xx}\right)\in H^1(0,L_0)\times H^1(L_0, L)$, and consequently \eqref{E:=(3.8)} holds true. Back to \eqref{E:=(3.23)}, using integration by parts in the left hand side, one derives
\begin{equation*}
\begin{array}{lll}
\int_0^{L_0}\left(-\beta_1 u_{xxx}-\chi_1\right)\overline{\hat{\nu}}\, dx+\int_{L_0}^L \left(-\beta_2 y_{xxx}-\chi_2\right)\overline{\hat{\mu}}\, dx=0,\quad \forall\; \left(\hat{\nu},\hat{\mu}\right)\in C^{1}_c\left(0,L_0\right)\times C^{1}_c\left(L_0,L\right),
\end{array}
\end{equation*}
consequently, by applying Corollary 4.24 in \cite{Brezis-2010}, we obtain
\begin{equation}\label{E:=(3.24)}
-\beta_1 u_{xxx}=\chi_1\ \ \text{for a.e. } x\in (0,L_0)\quad \text{and}\quad -\beta_2 y_{xxx}=\chi_2 \ \ \text{for a.e. } x\in (L_0,L).
\end{equation}
Finally, using integration by parts in the left hand side of \eqref{E:=(3.22)}, then using  \eqref{E:=(3.24)} and  taking $\hat{\mu}(L_0)=\hat{\nu}(L_0)=1$, it holds that
\begin{equation*}
    \beta_1 u_{xx}(L_0)-\beta_2 y_{xx}(L_0)=0,
\end{equation*}
thus, we obtain  \eqref{E:=(3.9)}.\\[0.1in]
$\bullet$ \textbf{Step 4.}  In this step, we prove that the resolvent $\left(I-\mathcal{A}\right)^{-1}$ of $\mathcal{A}$ is compact on the energy space $\mathcal{H}$. For this aim, let $f\in \mathcal{H}$ and $\Phi\in D\left(\mathcal{A}\right)$, such that $\left(I-\mathcal{A}\right)\Phi= f.$ Since $\mathcal{A}$ is monotone, it follows that
\begin{equation*}
\left\|f\right\|_{\mathcal{H}}^2\geq \left\|\Phi\right\|^2_{\mathcal{H}}+\left\|\mathcal{A}\Phi\right\|^2_{\mathcal{H}}.
\end{equation*}
The result follows from the above inequality and \eqref{E:=(3.6)}-\eqref{E:=(3.8)}. This completes the proof of the lemma.
\end{proof}
\begin{Lemma}\label{L:=3.5}
For all $\lambda\in \mathbb{R}$, we have
\begin{equation*}
\ker\left(i\lambda I-\mathcal{A}\right)=\{0\},
\end{equation*}
 where $\ker\left(i\lambda I-\mathcal{A}\right)$ denotes the Kernel of $i\lambda I-\mathcal{A}$.
\end{Lemma}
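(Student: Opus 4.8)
The plan is to fix $\lambda\in\mathbb{R}$ and a vector $\Phi=(\Phi_1,\Phi_2)\in D(\mathcal{A})$ with $\Phi_1=(u,y)$, $\Phi_2=(v,z,\theta)$ solving $\mathcal{A}\Phi=i\lambda\Phi$, and to show $\Phi=0$. Writing the eigenvalue equation componentwise, it is equivalent to $v=i\lambda u$, $z=i\lambda y$ together with $B\Phi_2+A_1\Phi_1=-i\lambda\,C\Phi_2$. Since $i\lambda\Phi\in\mathcal{H}$, Lemma~\ref{L:=3.4} applies and supplies the extra regularity $(u,y)\in\mathbb{W}_1\cap\big(H^3(0,L_0)\times H^3(L_0,L)\big)$, $\theta\in H^2(0,L_0)\cap H^1_0(0,L_0)$ and $\beta_1u_{xx}(L_0)=\beta_2y_{xx}(L_0)$. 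Using this regularity the abstract identity can be rewritten as the classical system \eqref{E:=(1.1)}--\eqref{E:=(1.10)} in which $u_t$ is replaced by $i\lambda u$, $u_{tt}$ by $-\lambda^2u$, $y_t$ by $i\lambda y$, $y_{tt}$ by $-\lambda^2 y$, and the remaining time derivatives by the corresponding powers of $i\lambda$.

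Next I would use dissipativity. Testing $\mathcal{A}\Phi=i\lambda\Phi$ against $\Phi$ in $\mathcal{H}$ and taking real parts, identity \eqref{E:=(3.1)} gives $\kappa\int_0^{L_0}|\theta_x|^2\,dx=-\Re\{i\lambda\|\Phi\|_{\mathcal{H}}^2\}=0$, hence $\theta_x\equiv0$; since $\theta\in H^1_0(0,L_0)$ this forces $\theta\equiv0$ on $(0,L_0)$.

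For $\lambda\neq0$, the heat equation \eqref{E:=(1.3)} now reduces to $-\gamma\,i\lambda\,u_{xx}=0$, so $u_{xx}\equiv0$ on $(0,L_0)$ (here $\gamma\neq0$ is essential), and with the clamped conditions $u(0)=u_x(0)=0$ we obtain $u\equiv0$, hence also $v=i\lambda u=0$. The transmission conditions then transfer this to $y$: \eqref{E:=(1.7)}--\eqref{E:=(1.8)} give $y(L_0)=y_x(L_0)=0$; \eqref{E:=(1.9)} with $u_{xx}(L_0)=0$ gives $y_{xx}(L_0)=0$; and \eqref{E:=(1.10)} with $\theta\equiv0$, $u\equiv0$ and $y_{xtt}(L_0)=-\lambda^2y_x(L_0)=0$ gives $y_{xxx}(L_0)=0$. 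Thus $y$ solves the fourth-order linear ODE $\beta_2y''''+\lambda^2\alpha_2y''-\lambda^2\rho_2y=0$ on $(L_0,L)$ with vanishing Cauchy data at $x=L_0$, so $y\equiv0$ by uniqueness for the Cauchy problem, and therefore $\Phi=0$. The case $\lambda=0$ is immediate: $\mathcal{A}\Phi=0$ gives $v=z=0$ and $B\Phi_2+A_1\Phi_1=0$; \eqref{E:=(3.1)} again gives $\theta\equiv0$, so $\Phi_2=0$ and $B\Phi_2=0$, whence $A_0(u,y)=0$; since $A_0$ is an isomorphism of $\mathbb{W}_1$ onto $\mathbb{W}_1'$, $(u,y)=0$.

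The only genuine obstacle is the bookkeeping when unpacking the abstract equation into the classical transmission system, so that \eqref{E:=(1.9)} and \eqref{E:=(1.10)} can legitimately be invoked to propagate the vanishing of $u$ and its derivatives at $x=L_0$ over to $y$; once the Cauchy data for $y$ at $L_0$ is seen to vanish, standard ODE uniqueness closes the argument with no further estimates.
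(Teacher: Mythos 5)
Your proposal is correct and follows the same skeleton as the paper's proof: dissipativity forces $\theta_x\equiv0$ hence $\theta\equiv0$; the thermal coupling (with $\gamma\neq0$) then kills $u_{xx}$, hence $u$; the transmission conditions transfer vanishing Cauchy data to $y$ at $x=L_0$; and the fourth-order equation for $y$ with that data forces $y\equiv0$. There are two points of divergence worth noting. First, you invoke the classical transmission conditions \eqref{E:=(1.9)}--\eqref{E:=(1.10)} directly, whereas the paper only has \eqref{E:=(1.9)} available from Lemma \ref{L:=3.4} (equation \eqref{E:=(3.9)}) and must \emph{re-derive} the natural condition \eqref{E:=(1.10)} (in the form $y_{xxx}(L_0)=0$, equation \eqref{E:=(3.34)}) by integrating by parts in the variational identity \eqref{E:=(3.28)} and testing with $\phi(L_0)=\varphi(L_0)=1$; you correctly identify this as the real bookkeeping burden, and it is precisely the part your proposal leaves unexecuted, though the route is standard. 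Second, your endgame differs: once $y$ satisfies $\beta_2y''''+\alpha_2\lambda^2y''-\rho_2\lambda^2y=0$ with $y(L_0)=y'(L_0)=y''(L_0)=y'''(L_0)=0$, you appeal to uniqueness for the Cauchy problem of a constant-coefficient linear ODE (legitimate, since bootstrapping the equation from $y\in H^3$ gives smoothness), while the paper instead multiplies by $2(x-L)\overline{y_{xxx}}$ and integrates by parts to obtain $-\beta_2\int|y_{xxx}|^2-\alpha_2\lambda^2\int|y_{xx}|^2-3\rho_2\lambda^2\int|y_x|^2=0$, from which $y\equiv0$. Your ODE-uniqueness argument is the more elementary of the two and uses all four vanishing traces at $L_0$; the paper's multiplier identity is more robust to lower regularity and foreshadows the multiplier technique reused in Section \ref{S:=4}. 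Your separate, direct treatment of $\lambda=0$ via the isomorphism property of $A_0$ is clean and, if anything, tidier than the paper's uniform treatment.
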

\begin{proof} Let $\lambda\in\mathbb{R}$, such that $i\lambda$  is an eigenvalue of the operator $\mathcal{A}$ and $\Phi=\left(\Phi_1,\Phi_2\right)\in D\left(\mathcal{A}\right)$  a corresponding eigenvector, where $\Phi_1=\left(u,y\right)$ and $\Phi_2=\left(v,z,\theta\right)$ . Therefore, we have
\begin{equation}\label{E:=(3.25)}
\mathcal{A} \Phi=i\lambda \Phi.
\end{equation}
Similar to \eqref{E:=(3.1)}, we get
\begin{equation*}
0=\Re\left<i\lambda \Phi,\Phi\right>_{\mathcal{H}}=\Re\left<\mathcal{A} \Phi,\Phi\right>_{\mathcal{H}}=-\kappa\int_0^{L_0}\left|\theta_x\right|^2 dx.
\end{equation*}
Consequently, we deduce that
\begin{equation*}
\theta_x=0,\quad \text{a.e. } x\in (0,L_0).
\end{equation*}
Since $\theta\in H^1_0(0,L_0)$ (i.e., $\theta(0)=\theta(L_0)=0$), we get
\begin{equation}\label{E:=(3.26)}
\theta=0,\quad \text{a.e. } x\in (0,L_0).
\end{equation}
Next, writing \eqref{E:=(3.25)} in a detailed form gives
\begin{equation*}
\left\{
\begin{array}{lll}

(v,z)= \left(i\lambda\, u,i\lambda\, y\right),

\nline

C^{-1}\left(B\Phi_2+A_1\Phi_1\right)+i\lambda \Phi_2=0.

\end{array}
\right.
\end{equation*}
Since the operator $C$  is an isomorphism of ${\mathbb{V}_2}$ onto $\mathbb{V}_2'$, then the above system is equivalent to
\begin{equation}\label{E:=(3.27)}
\left\{
\begin{array}{lll}

(v,z)= \left(i\lambda\, u,i\lambda\, y\right),

\nline

B\Phi_2+A_1\Phi_1+i\lambda \, C\Phi_2=0.

\end{array}
\right.
\end{equation}
Inserting the first equation of  \eqref{E:=(3.27)} in the second one, then using \eqref{E:=(3.26)}, we obtain that
\begin{equation*}
A_1\left(u,y\right)+B \left(i\lambda\, u,i\lambda\, y,0\right)-\lambda^2 \, C\left( u, y,0\right)=0,\quad \text{in } \mathbb{V}_2'\subset \subset \mathbb{W}_2'.
\end{equation*}
For all $Z=\left(\phi,\varphi,\psi\right)\in \mathbb{W}_2$, using the above equation, \eqref{E:=(2.5)}, and  \eqref{E:=(2.8)},  we get
\begin{equation*}
\begin{array}{ccc}

0&=&\left<A_1\left(u,y\right)+B\left(i\lambda\, u,i\lambda\, y,0\right)-\lambda^2 \, C\left( u, y,0\right),\left(\phi,\varphi,\psi\right) \right>_{\mathbb{V}_2'\times {\mathbb{V}_2}}\nline

&=&\left<A_1\left(u,y\right)+B\left(i\lambda\, u,i\lambda\, y,0\right),\left(\phi,\varphi,\psi\right)\right>_{\mathbb{W}_2'\times {\mathbb{W}_2}}-\lambda^2 \left<C\left( u, y,0\right),\left(\phi,\varphi,\psi\right)\right>_{\mathbb{V}_2'\times {\mathbb{V}_2}}\nline

&=&a\left(\left( u, y\right),\left(\phi,\varphi\right)\right)+i\lambda b\left(\left( u, y,0\right),\left(\phi,\varphi,\psi\right)\right)-\lambda^2 c\left(\left( u, y,0\right),\left(\phi,\varphi,\psi\right)\right).

\end{array}
\end{equation*}
Using  \eqref{E:=(2.6)} in the above equation, we obtain
\begin{equation}\label{E:=(3.28)}
\begin{array}{lll}

\beta_1\int_0^{L_0}u_{xx}\overline{\phi_{xx}}\, dx+\beta_2\int_{L_0}^Ly_{xx}\overline{\varphi_{xx}}\, dx

+\gamma\int_0^{L_0} u_{x}\overline{\psi_x} dx=

\nline

\lambda^2\int_0^{L_0}\left(\rho_1u\overline{\phi}+\alpha_1\, u_x\overline{\phi_x}\right)dx

+\lambda^2\int_{L_0}^L\left(\rho_2y\overline{\varphi}+\alpha_2\, y_x\overline{\varphi_x}\right)dx,\ \ \forall \left(\phi,\varphi,\psi\right)\in \mathbb{W}_2.

\end{array}
\end{equation}
Now, setting $\phi=0,\ \varphi=0,$ and $\psi\in C^{\infty}_c\left(0,L_0\right)\subset H^1_0\left(0,L_0\right)$ in \eqref{E:=(3.28)},  we get
\begin{equation*}
\gamma\int_0^{L_0} u_{x}\overline{\psi_x} dx=0,\quad \forall \psi\in C^{\infty}_c\left(0,L_0\right).
\end{equation*}
In the above equation, using integration by parts, we see that
\begin{equation*}
\int_0^{L_0} u_{xx}\overline{\psi} dx=0,\quad \forall \psi\in C^{\infty}_c\left(0,L_0\right).
\end{equation*}
Therefore,  by applying Corollary 4.24 in \cite{Brezis-2010}, we have
\begin{equation*}
u_{xx}=0,\quad \text{a.e. } x\in (0,L_0).
\end{equation*}
Since $u\in H^2(0,L_0)$ and $u(0)=u_x(0)=0$, one derives
\begin{equation}\label{E:=(3.29)}
u=0,\quad \text{a.e. } x\in (0,L_0).
\end{equation}
Since $(u,y)\in \mathbb{W}_1$, then from \eqref{E:=(3.29)} and by the help of Lemma \ref{L:=3.4}, we obtain
\begin{equation}\label{E:=(3.30)}
y\in H^3(L_0,L)\quad \text{and}\quad y(L)=y_x(L)=y(L_0)=y_x(L_0)=y_{xx}(L_0)=0.
\end{equation}
Next, setting $\left(\phi, \varphi\right)\in   \mathbb{W}_1$ and $\psi=0$ in \eqref{E:=(3.28)},  then using \eqref{E:=(3.29)}, one has
\begin{equation*}
\beta_2\int_{L_0}^Ly_{xx}\overline{\varphi_{xx}}\, dx=\lambda^2\int_{L_0}^L\left(\rho_2y\overline{\varphi}+\alpha_2\, y_x\overline{\varphi_x}\right)dx,\quad \forall\, \left(\phi, \varphi\right)\in   \mathbb{W}_1.
\end{equation*}
Using integration by parts, after that using \eqref{E:=(3.30)} and the fact that $\varphi_x(L_0)=0$, we get
\begin{equation}\label{E:=(3.31)}
-\beta_2\int_{L_0}^Ly_{xxx}\overline{\varphi_{x}}\, dx=\lambda^2\int_{L_0}^L\left(\rho_2y-\alpha_2\, y_{xx}\right)\overline{\varphi}\, dx,\quad \forall\, \left(\phi, \varphi\right)\in   \mathbb{W}_1 .
\end{equation}
Taking $\left(\phi,\varphi\right)\in  C^{1}_c\left(0,L_0\right)\times C^{1}_c\left(L_0,L\right)\subset   \mathbb{W}_1$ in \eqref{E:=(3.22)}, we find that
\begin{equation}\label{E:=(3.32)}
-\beta_2\int_{L_0}^Ly_{xxx}\overline{\varphi_{x}}\, dx=\lambda^2\int_{L_0}^L\left(\rho_2y-\alpha_2\, y_{xx}\right)\overline{\varphi}\, dx,\quad  \forall\, \varphi\in    C^{1}_c\left(L_0,L\right),
\end{equation}
thus, by using the definition of $H^1$ (see page 202 in \cite{Brezis-2010}), we get $y_{xxx}\in H^1(L_0,L)$. Again, using integration by  parts in the right hand side of \eqref{E:=(3.32)}, we infer
\begin{equation*}
\int_{L_0}^L \left[\beta_2\, y_{xxxx}-\lambda^2\left(\rho_2y-\alpha_2\, y_{xx}\right)\right]\overline{\varphi}\, dx=0,\quad \forall\; \varphi\in C^{1}_c\left(L_0,L\right).
\end{equation*}
Consequently, by applying Corollary 4.24 in \cite{Brezis-2010},  then using  \eqref{E:=(3.30)}, we obtain
\begin{equation}\label{E:=(3.33)}
\beta_2 y_{xxxx}+\alpha_2\, \lambda^2\, y_{xx}-\rho_2\,\lambda^2\, y=0,\quad x\in (L_0,L).
\end{equation}
Using integration by parts in the left hand side of \eqref{E:=(3.31)},  after that using \eqref{E:=(3.33)} and the fact that $\phi(L)=0$, then   taking $\phi(L_0)=\varphi(L_0)=1$, one has
\begin{equation}\label{E:=(3.34)}
y_{xxx}(L_0)=0.
\end{equation}
Therefore, from \eqref{E:=(3.30)}, \eqref{E:=(3.33)} and \eqref{E:=(3.34)}, we get
\begin{eqnarray}
\beta_2 y_{xxxx}+\alpha_2\, \lambda^2\, y_{xx}-\rho_2\,\lambda^2\, y=0,\quad x\in (L_0,L),\label{E:=(3.35)}
\nline
y(L)=y_x(L)=y(L_0)=y_x(L_0)=y_{xx}(L_0)=y_{xxx}(L_0)=0.\label{E:=(3.36)}
\end{eqnarray}
Multiplying \eqref{E:=(3.35)} by $2\left(x-L\right)\overline{y_{xxx}}$, integrating by parts over $(L_0,L)$, then taking the real parts, we find
\begin{equation*}
\begin{array}{lll}

\beta_2 \int_{L_0}^L \left(x-L\right) \left(\left|y_{xxx}\right|^2\right)_x  dx+\alpha_2\lambda^2 \int_{L_0}^L \left(x-L\right) \left(\left|y_{xx}\right|^2\right)_x  dx-\rho_2\, \lambda^2 \int_{L_0}^L \left(x-L\right) \left(\left|y_{x}\right|^2\right)_x  dx

\nline

-4\rho_2\, \lambda^2 \int_{L_0}^L \left|y_x\right|^2  dx

-2\rho_2\,\lambda^2 \Re\left[\left(x-L\right) y \overline{ y_{xx} }-\left(\left(x-L\right) y\right)_x \overline{y_{x}}\right]_{L_0}^L=0.

\end{array}
\end{equation*}
Using integration by parts and the boundary conditions of \eqref{E:=(3.36)}, we arrive at
\begin{equation*}
-\beta_2 \int_{L_0}^L  \left|y_{xxx}\right|^2  dx-\alpha_2\lambda^2 \int_{L_0}^L \left|y_{xx}\right|^2  dx-3 \rho_2\,\lambda^2 \int_{L_0}^L \left|y_{x}\right|^2  dx=0.
\end{equation*}
Consequently, from the above equation and the boundary conditions of \eqref{E:=(3.36)}, we obtain
\begin{equation*}
y=0.
\end{equation*}
Finally, from the above equation, first equation of \eqref{E:=(3.27)}, \eqref{E:=(3.26)}, and \eqref{E:=(3.29)}, we get $\Phi=0.$  The proof is thus complete.
\end{proof}
\noindent \textbf{Proof of Theorem \ref{T:=3.3}.} From Lemma \ref{L:=3.4}, we have that the operator $\mathcal{A}$ has a compact resolvent. In addition, from Lemma \eqref{L:=3.5}, we get that the operator $\mathcal{A}$ has no pure imaginary eigenvalues. Thus, we get the conclusion by applying  Arendt and Batty theorem (see Theorem \ref{T:=A.2} and Corollary \ref{C:=A.3}).
\xqed{$\square$}
\section{Polynomial Stability}\label{S:=4}
\noindent In this section, we will prove the polynomial stability of System \eqref{E:=(1.1)}-\eqref{E:=(1.12)}. Our main results in this part are the following theorems.
\begin{Theorem}\label{T:=4.1}  If
\begin{equation}\label{E:=(4.1)}
\rho_1\geq \rho_2\quad\text{and}\quad \alpha_1\geq \alpha_2,
\end{equation}
then for all initial data $\Phi_0 \in D(\mathcal{A}),$ there exists a constant $C>0$ independent of $\Phi_0$ such that the energy of System \eqref{E:=(1.1)}-\eqref{E:=(1.12)} satisfies the following estimation
 \begin{equation*}
 E_1(t) \leq  \frac{C}{t^2} \left\|\Phi_0\right\|^2_{D(\mathcal{A})}, \quad \forall t>0.
 \end{equation*}
 \end{Theorem}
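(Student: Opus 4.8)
\textbf{Proof strategy for Theorem \ref{T:=4.1}.} The plan is to invoke the Borichev--Tomilov theorem, which reduces the polynomial decay rate $t^{-2}$ to the frequency-domain estimate
\begin{equation*}
\sup_{\lambda\in\mathbb{R}}\frac{1}{|\lambda|}\left\|\left(i\lambda I-\mathcal{A}\right)^{-1}\right\|_{\mathcal{L}(\mathcal{H})}<+\infty,
\end{equation*}
together with the fact (already established via Lemma \ref{L:=3.5} and the compactness of the resolvent in Lemma \ref{L:=3.4}) that $i\mathbb{R}\subset\rho(\mathcal{A})$. I would argue by contradiction: suppose the estimate fails, so there exist a sequence $\lambda_n\in\mathbb{R}$ with $|\lambda_n|\to+\infty$ and normalized $\Phi_n=(u_n,y_n,v_n,z_n,\theta_n)\in D(\mathcal{A})$, $\|\Phi_n\|_{\mathcal{H}}=1$, such that
\begin{equation*}
\lambda_n\left(i\lambda_n I-\mathcal{A}\right)\Phi_n=:F_n=(f_n^1,f_n^2,f_n^3,f_n^4,f_n^5)\to 0\quad\text{in }\mathcal{H}.
\end{equation*}
Writing this out componentwise gives $i\lambda_n u_n - v_n = \lambda_n^{-1}f_n^1$, $i\lambda_n y_n - z_n = \lambda_n^{-1}f_n^2$, and the three PDE-type identities coming from $i\lambda_n\Phi_2 + C^{-1}(B\Phi_2+A_1\Phi_1) = \lambda_n^{-1}(f_n^3,f_n^4,f_n^5)$. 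The first and most robust step is the dissipation identity: from \eqref{E:=(3.1)}, $\Re\langle(i\lambda_n I-\mathcal{A})\Phi_n,\Phi_n\rangle_{\mathcal{H}} = \kappa\int_0^{L_0}|(\theta_n)_x|^2\,dx$, so
\begin{equation*}
\kappa\int_0^{L_0}|(\theta_n)_x|^2\,dx = \frac{1}{\lambda_n}\Re\langle F_n,\Phi_n\rangle_{\mathcal{H}}\to 0,
\end{equation*}
hence $\theta_n\to 0$ in $H^1_0(0,L_0)$ and, using the heat equation, $\lambda_n^{-1/2}(v_n)_x$-type quantities on $(0,L_0)$ are controlled. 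The bulk of the work is then to propagate this smallness from the thermal component into the mechanical components on $(0,L_0)$ and, through the transmission conditions at $L_0$, into the undamped beam on $(L_0,L)$, ultimately contradicting $\|\Phi_n\|_{\mathcal{H}}=1$.

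The technical heart is a sequence of multiplier estimates. On $(0,L_0)$: from the coupled equation for $u_n$ and the heat equation one first shows $\|\sqrt{\alpha_1}(v_n)_x\|_{L^2(0,L_0)}$ and $\|\sqrt{\rho_1}v_n\|_{L^2(0,L_0)}$, together with $\|\sqrt{\beta_1}(u_n)_{xx}\|_{L^2(0,L_0)}$, tend to zero --- typically by multiplying the $u_n$-equation by $\overline{u_n}$ (or by a cutoff times $\overline{u_n}$), integrating by parts, and absorbing the thermal coupling term $\gamma\int_0^{L_0}(\theta_n)_x\overline{(u_n)_x}$ which is $o(1)$ by the dissipation bound, plus carefully tracking the boundary terms at $x=L_0$. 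A crucial intermediate point is to obtain decay of the trace quantities $u_n(L_0), (u_n)_x(L_0), (u_n)_{xx}(L_0), (u_n)_{xxx}(L_0), (v_n)_x(L_0)$ and likewise of $\theta_n$-traces (note $\theta_n(L_0)=0$, but $(\theta_n)_x(L_0)$ needs an estimate, obtained by an $x\cdot(\text{heat eq.})$ multiplier or by elliptic regularity since $\theta_n\to 0$ in $H^1_0$ forces $(\theta_n)_{xx}$ smallness in a suitable negative-order sense). Once the $(0,L_0)$ side and all traces at $L_0$ are $o(1)$, the transmission conditions \eqref{E:=(1.7)}--\eqref{E:=(1.10)} hand us $y_n(L_0),(y_n)_x(L_0),(y_n)_{xx}(L_0),(y_n)_{xxx}(L_0),(z_n)_x(L_0)$ all $o(1)$. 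Then a single multiplier on $(L_0,L)$ --- the natural choice is $2(x-L_0)\overline{(y_n)_x}$ applied to $\beta_2 y_{n,xxxx}+\alpha_2\lambda_n^2 y_{n,xx}-\rho_2\lambda_n^2 y_n = (\text{l.o.t.})$, exactly mimicking the computation performed in the proof of Lemma \ref{L:=3.5} --- produces, after integration by parts and using the $o(1)$ boundary data, a coercive combination
\begin{equation*}
\beta_2\|(y_n)_{xx}\|_{L^2(L_0,L)}^2 + 3\alpha_2\lambda_n^2\|(y_n)_x\|_{L^2(L_0,L)}^2 + \rho_2\lambda_n^2\|y_n\|_{L^2(L_0,L)}^2 = o(1),
\end{equation*}
so the whole $(L_0,L)$ part of $\|\Phi_n\|_{\mathcal{H}}^2$ vanishes, and combining with the $(0,L_0)$ estimates yields $\|\Phi_n\|_{\mathcal{H}}\to 0$, the desired contradiction.

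The role of the hypothesis \eqref{E:=(4.1)}, $\rho_1\ge\rho_2$ and $\alpha_1\ge\alpha_2$, is precisely in making the decay rate $t^{-2}$ rather than $t^{-1}$: it allows the mechanical energy $\int_0^{L_0}(\rho_1|v_n|^2+\alpha_1|(v_n)_x|^2)+\int_{L_0}^L(\rho_2|z_n|^2+\alpha_2|(z_n)_x|^2)$ on the damped side to dominate, so that the coupling constant one pays when transferring information across $L_0$ costs only one power of $\lambda_n$ rather than two; concretely, the term $\rho_2\lambda_n^2 z_n$-type contributions on $(L_0,L)$ can be matched against $\rho_1\lambda_n^2 v_n$-type contributions on $(0,L_0)$ without loss. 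Under the complementary hypothesis one only gets $\sup_\lambda |\lambda|^{-2}\|(i\lambda-\mathcal{A})^{-1}\|<\infty$, giving $t^{-1}$; that is the content of the next theorem. The main obstacle throughout is bookkeeping: keeping rigorous track of all the boundary/trace terms at $x=L_0$ at each integration by parts and verifying that each is genuinely $o(1)$ with the correct power of $\lambda_n$ in front --- the PDE identities only hold modulo $\lambda_n^{-1}F_n$ terms, and since we multiply by quantities that are $O(\lambda_n)$ in various norms, one must be scrupulous that every error is truly negligible and that no hidden resonance (which would manifest as a term that cannot be absorbed) appears. This is exactly where the sharpness of the estimates, alluded to in the introduction, is decisive.
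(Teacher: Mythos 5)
Your overall skeleton (Borichev--Tomilov with $\ell=1$, contradiction argument with $\|\Phi^n\|_{\mathcal H}=1$ and $\lambda_n(i\lambda_n I-\mathcal A)\Phi^n\to 0$, the dissipation identity giving $\theta^n\to 0$ in $H^1_0(0,L_0)$, then multipliers and the transmission conditions) matches the paper, but the two technically decisive steps are missing or wrong. First, the passage from smallness of $\theta^n$ to smallness of the mechanical energy on $(0,L_0)$ cannot be done by ``multiplying the $u_n$-equation by $\overline{u_n}$ and absorbing the coupling term'': that identity only \emph{relates} $\lambda_n^2(\rho_1\|u^n\|^2+\alpha_1\|u_x^n\|^2)$ to $\beta_1\|u_{xx}^n\|^2$ plus boundary terms and produces no decay by itself. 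The paper's mechanism (Lemma \ref{L:=4.5}) is to solve the heat equation \eqref{E:=(4.9)} for $u_{xx}^n$ in terms of $\lambda_n^{-1}\theta_{xx}^n$ and $\theta^n$, pair with $u_{xx}^n$, integrate by parts, and then spend Lemmas \ref{L:=4.6}--\ref{L:=4.10} (Gagliardo--Nirenberg interpolation for $\|u_{xxx}^n\|$ and $|u_{xx}^n|_\infty$, a $\cos$-multiplier for the traces $\theta_x^n(0),\theta_x^n(L_0)$, and a chain of Young inequalities) to close the resulting estimate and obtain $\|u_{xx}^n\|^2_{L^2(0,L_0)}=o(\lambda_n^{-\ell})$. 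None of this is in your sketch, and it is the heart of the proof.

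Second, your route across the interface --- prove individual $o(1)$ decay of $u^n(L_0),\dots,u_{xxx}^n(L_0),v_x^n(L_0)$, transfer by the transmission conditions, then run a single $2(x-L_0)\overline{y_x^n}$ multiplier on $(L_0,L)$ --- is not what the paper does and would not work as stated: decay of $u_{xxx}^n(L_0)$ and of $\lambda_n^2|u_x^n(L_0)|^2$ is never established (and is not available at the rate you would need for $\ell=1$). Instead, the paper applies the multiplier $2(x-L)(\cdot)_x$ on \emph{both} intervals, adds the two identities together with the $\overline{u^n}$, $\overline{y^n}$ identities, and lets the transmission conditions \eqref{E:=(3.9)} and \eqref{E:=(4.11)} cancel all third-order boundary terms at $L_0$; the only surviving interface contribution is
\begin{equation*}
\lambda_n^2\left(L-L_0\right)\left[\left(\alpha_1-\alpha_2\right)\left|u_x^n(L_0)\right|^2+\left(\rho_1-\rho_2\right)\left|u^n(L_0)\right|^2\right],
\end{equation*}
and this is precisely where hypothesis \eqref{E:=(4.1)} enters: under $\rho_1\ge\rho_2$, $\alpha_1\ge\alpha_2$ the term is nonnegative and can be discarded, giving the whole energy $=o(\lambda_n^{1-\ell})=o(1)$ for $\ell=1$; otherwise it can only be bounded by $o(\lambda_n^{2-\ell})$, forcing $\ell=2$. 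Your heuristic about ``the damped side dominating'' does not identify this sign mechanism, so as written the proposal does not explain why the hypothesis yields $t^{-2}$, nor does it supply a working substitute for the paper's interpolation estimates on $(0,L_0)$.
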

\begin{Theorem}\label{T:=4.2}
If
\begin{equation}\label{E:=(4.2)}
\rho_1< \rho_2\quad\text{or}\quad\alpha_1< \alpha_2,
\end{equation}
then for all initial data $\Phi_0 \in D(\mathcal{A}),$ there exists a constant $C>0$ independent of $\Phi_0$ such that the energy of System \eqref{E:=(1.1)}-\eqref{E:=(1.12)} satisfies the following estimation
 \begin{equation*}
 E_1(t) \leq  \frac{C}{t} \left\|\Phi_0\right\|^2_{D(\mathcal{A})}, \quad \forall t>0.
 \end{equation*}
 \end{Theorem}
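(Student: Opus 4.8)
The plan is to invoke the Borichev--Tomilov theorem: since $\mathcal{A}$ generates a bounded $C_0$-semigroup and, by Lemma \ref{L:=3.5}, $i\mathbb{R}\cap\sigma(\mathcal{A})=\emptyset$ (the compactness of the resolvent from Lemma \ref{L:=3.4} upgrades the absence of eigenvalues to $i\mathbb{R}\subset\rho(\mathcal{A})$), it suffices to establish the resolvent estimate
\begin{equation*}
\sup_{\lambda\in\mathbb{R}}\frac{1}{|\lambda|^{\ell}}\left\|(i\lambda I-\mathcal{A})^{-1}\right\|_{\mathcal{L}(\mathcal{H})}<+\infty,
\end{equation*}
with $\ell=1$ under hypothesis \eqref{E:=(4.1)} and $\ell=2$ under hypothesis \eqref{E:=(4.2)}, which yields energy decay of order $t^{-2/\ell}$. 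I would argue by contradiction in the standard way: suppose the estimate fails, so there exist sequences $\lambda_n\in\mathbb{R}$ with $|\lambda_n|\to\infty$ and $\Phi_n=(u_n,y_n,v_n,z_n,\theta_n)\in D(\mathcal{A})$ with $\|\Phi_n\|_{\mathcal{H}}=1$ such that $\lambda_n^{\ell}(i\lambda_n I-\mathcal{A})\Phi_n=:F_n=(f_n^1,f_n^2,g_n^1,g_n^2,h_n)\to 0$ in $\mathcal{H}$. Writing this out componentwise gives $i\lambda_n u_n - v_n = \lambda_n^{-\ell}f_n^1$, $i\lambda_n y_n - z_n = \lambda_n^{-\ell}f_n^2$, together with the three PDE lines \eqref{E:=(1.1)}--\eqref{E:=(1.3)} with right-hand sides $o(\lambda_n^{-\ell})$ in the appropriate spaces, and the transmission conditions \eqref{E:=(1.7)}--\eqref{E:=(1.10)}.

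The first dissipation step: from $\Re\langle(i\lambda_n I-\mathcal{A})\Phi_n,\Phi_n\rangle_{\mathcal{H}} = \kappa\int_0^{L_0}|(\theta_n)_x|^2\,dx$ and $\|(i\lambda_n I-\mathcal{A})\Phi_n\|_{\mathcal{H}}=o(\lambda_n^{-\ell})$, one gets $\int_0^{L_0}|(\theta_n)_x|^2\,dx=o(\lambda_n^{-\ell})$, hence $\theta_n\to 0$ in $H^1_0(0,L_0)$ and, via the heat equation \eqref{E:=(1.3)}, control on $\theta_n$ in $H^2$ and on the coupling term $\gamma(u_n)_{xxt}$, giving in particular $(u_n)_{xt}\to 0$ in $L^2(0,L_0)$, i.e. $(v_n)_x=i\lambda_n(u_n)_x+o(\cdot)\to 0$, so $\alpha_1\|(v_n)_x\|^2\to 0$. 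The heart of the argument is then to propagate this smallness on the thermoelastic part $(0,L_0)$ across the interface $x=L_0$ into the purely elastic part $(L_0,L)$, and ultimately to contradict $\|\Phi_n\|_{\mathcal{H}}=1$. For this I would use multipliers of the form $(x-L)(y_n)_{xxx}$ or $q(x)(y_n)_{xxx}$ with a suitable cutoff $q$ on $(L_0,L)$ — exactly as in the proof of Lemma \ref{L:=3.5} — to produce identities relating $\int_{L_0}^L|(y_n)_{xxx}|^2$, $\lambda_n^2\int|(y_n)_{xx}|^2$, $\lambda_n^2\int|(y_n)_x|^2$ and boundary terms at $L_0$; the transmission conditions \eqref{E:=(1.7)}--\eqref{E:=(1.10)} and the traces of $u_n$ (already controlled) let one bound those boundary terms. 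Running the analogous multiplier $q(x)(u_n)_{xxx}$ on $(0,L_0)$ and combining closes the loop. The role of the dichotomy \eqref{E:=(4.1)} versus \eqref{E:=(4.2)} will be to control the cross terms $\rho_2\lambda_n^2\|y_n\|^2$ and $\alpha_2\lambda_n^2\|(y_n)_x\|^2$ by the corresponding already-estimated quantities with $\rho_1,\alpha_1$: when $\rho_1\ge\rho_2$ and $\alpha_1\ge\alpha_2$ the comparison is lossless and one gains the full power $\lambda_n^{2}$ (i.e. $\ell=1$ in the estimate corresponds to $t^{-2}$), whereas if $\rho_1<\rho_2$ or $\alpha_1<\alpha_2$ one loses a factor of $\lambda_n$ and only obtains $\ell=2$ (i.e. $t^{-1}$).

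The main obstacle, as usual for transmission problems with partial (here, thermal) dissipation, is the \emph{frequency-dependent transfer of energy through the interface}: the damping acts only on $\theta$ on $(0,L_0)$, so one must show that no sequence of high-frequency modes can concentrate its energy on the undamped beam $(L_0,L)$ while remaining nearly orthogonal to the dissipation. Technically this means choosing the multipliers and cutoff functions so that every boundary term generated at $x=0$, $x=L_0$, $x=L$ is either killed by a boundary/transmission condition or absorbed into a quantity already known to be $o(1)$, while carefully tracking the powers of $\lambda_n$ — it is precisely the bookkeeping of these powers, together with the sign conditions coming from integration by parts of terms like $\int(x-L)(|y_{xxx}|^2)_x$, that forces the two different decay rates and pins down their sharpness. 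I expect the elastic-to-elastic transmission estimate on $(L_0,L)$ and the accounting of the $(u_n)_{xtt}(L_0)$ term in \eqref{E:=(1.10)} to be the most delicate points.
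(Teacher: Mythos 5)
Your overall strategy coincides with the paper's: Borichev--Tomilov with $\ell=2$, a contradiction argument on sequences $\lambda_n\to\infty$, $\|\Phi^n\|_{\mathcal{H}}=1$, the dissipation identity giving $\|\theta^n_x\|^2_{L^2(0,L_0)}=o(\lambda_n^{-\ell})$, and then multipliers combined with the transmission conditions to transfer smallness across $x=L_0$. You also locate the dichotomy correctly: it enters through the interface term $\lambda_n^2(L-L_0)\left[(\alpha_1-\alpha_2)|u_x^n(L_0)|^2+(\rho_1-\rho_2)|u^n(L_0)|^2\right]$, which can be discarded by sign when $\alpha_1\geq\alpha_2$ and $\rho_1\geq\rho_2$ but otherwise must be absorbed via trace estimates at the cost of one power of $\lambda_n$, which is exactly what produces $\ell=2$ here.

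There is, however, a genuine gap at the pivotal step where you pass from $\theta^n\to0$ to smallness of the beam energy on $(0,L_0)$. You assert that the heat equation gives ``control on $\theta_n$ in $H^2$ and on the coupling term $\gamma(u_n)_{xxt}$, giving in particular $(u_n)_{xt}\to0$ in $L^2(0,L_0)$.'' This does not follow: the resolvent form of \eqref{E:=(1.3)} reads $-\kappa\theta^n_{xx}+i\rho_0\lambda_n\theta^n-i\gamma\lambda_n u^n_{xx}=o(\lambda_n^{-\ell})$, and since $\lambda_n u^n_{xx}$ is a priori only $O(\lambda_n)$, so is $\theta^n_{xx}$; no $H^2$ smallness of $\theta^n$ is available, and any attempt to derive it from the equation is circular because it presupposes control of $u^n_{xx}$. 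The quantity actually needed is $\|u^n_{xx}\|^2_{L^2(0,L_0)}=o(\lambda_n^{-\ell})$ (Lemma \ref{L:=4.5}), and extracting it from $u^n_{xx}=i\kappa\gamma^{-1}\lambda_n^{-1}\theta^n_{xx}+\gamma^{-1}\rho_0\theta^n+\dots$ requires integrating by parts the term $\lambda_n^{-1}\int\theta^n_{xx}\overline{u^n_{xx}}$, which in turn demands (i) a bound on $\lambda_n^{-1}\|u^n_{xxx}\|$ in terms of $\|u^n_{xx}\|$ obtained by combining Gagliardo--Nirenberg with the beam equation \eqref{E:=(4.7)}, (ii) pointwise trace estimates on $\theta^n_x(0)$ and $\theta^n_x(L_0)$ from a separate multiplier $P(x)\theta^n_x$, and (iii) an $L^\infty$ interpolation bound on $u^n_{xx}$. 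This is the content of Lemmas \ref{L:=4.6}--\ref{L:=4.10} and is where most of the work lies; without it the subsequent multiplier computation on $(L_0,L)$ has nothing to start from. A minor secondary point: the third-order multipliers $(x-L)y^n_{xxx}$ you propose are what the paper uses for the kernel computation in Lemma \ref{L:=3.5}; for the quantitative step it uses the first-order multipliers $-u^n$, $-y^n$, $2(x-L)u^n_x$, $2(x-L)y^n_x$, which are what generate the interface term carrying the dichotomy.
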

\noindent From Lemma \ref{L:=3.4}   and Lemma \ref{L:=3.5}, we have seen that $i \mathbb{R}\subset \rho(\mathcal{A}),$  then for the proof of Theorems \ref{T:=4.1} and \ref{T:=4.2}, according to Borichev and Tomilov  \cite{Borichev-Tomilov-2010} (see Theorem \ref{T:=A.4}), we need to prove that
\begin{equation}\label{E:=(4.3)}
\sup_{\lambda\in\mathbb{R}}\frac{1}{|\lambda|^{\ell}}\left\|\left(i\lambda I-\mathcal{A}\right)^{-1}\right\|_{\mathcal{L}\left(\mathcal{H}\right)}<\infty,
\end{equation}
 where $\ell=1$ (resp. $\ell=2$) if condition \eqref{E:=(4.1)} (resp. condition \eqref{E:=(4.2)})  holds. We will argue by contradiction. We suppose that there exists
 \begin{equation*}
 \left\{(\lambda_n,\Phi^n:=\left(\Phi_1^n,\Phi_2^n\right))\right\}_{n\geq 1}\subset \mathbb{R}_{+}^*\times D\left(\mathcal{A}\right),
 \end{equation*}
 such that
\begin{equation}\label{E:=(4.4)}
\lambda_n \to  + \infty, \quad \left\|\Phi^n\right\|_{{\mathcal{H}}} = 1,
\end{equation}
and there exists a sequence ${F}^n:=(g^{n},h^{n}) \in \mathcal{H}$, such that
\begin{equation}\label{E:=(4.5)}
\lambda_n^{\ell} (i \lambda_n I - \mathcal{A})\Phi_n = F^n \to  0 \ \textrm{in} \ {\mathcal{H}},
\end{equation}
where $\Phi_1^n=\left(u^n,y^n\right),\ \Phi_2^n=\left(v^n,z^n,\theta^n\right),\ g^n=\left(g_1^n,g_2^n\right),$ and $h^n=\left(h_1^n,h_2^n,h_3^n\right).$  We will check condition \eqref{E:=(4.3)}  by finding a contradiction with $\left\|\Phi^n\right\|_{{\mathcal{H}}} = 1$ such as $\left\| \Phi^n\right\|_{\mathcal{H}} =o(1).$
By detailing Equation \eqref{E:=(4.5)}, we get the following system
\begin{equation}\label{E:=(4.6)}
\left\{
\begin{array}{lll}

i\lambda_n\, \Phi_1^n-\left(v^n,z^n\right)-\lambda_n^{-\ell}\, g^n=0,& \text{in } \mathbb{W}_1

\nline

i\lambda_n\, \Phi_2^n+C^{-1}\left(B\Phi_2^n+A_1\Phi_1^n\right)-\lambda_n^{-\ell}\, h^n=0,& \text{in } \mathbb{V}_2.

\end{array}
\right.
\end{equation}
The proof of Theorems \ref{T:=4.1}  and \ref{T:=4.2}  is divided into several lemmas.
\begin{Lemma}\label{L:=4.3}
For all $\ell\geq0$, the solution $\left(\Phi_1^n,\Phi_2^n\right)\in D(\mathcal{A})$ of System \eqref{E:=(4.6)} satisfies the following asymptotic behavior  estimations
\begin{eqnarray}
\left(\beta_1\, u_{xxx}^n+\alpha_1\,\lambda^{-\ell}_n   (h_1^n)_x+\alpha_1\,\lambda^2_n  u_{x}^n+\gamma  \theta_{x}^n+i\alpha_1\,\lambda^{1-\ell}_n   (g_1^n)_x\right)_x-\rho_1\,\lambda^2_n u^n=\rho_1\,\lambda^{1-\ell}_n\left(\lambda_n^{-1} h_1^n+ig_1^n\right),\label{E:=(4.7)}\nline
\left(\beta_2\, y_{xxx}^n+\alpha_2\,\lambda^{-\ell}_n   (h_2^n)_x+\alpha_2\,\lambda^2_n  y_{x}^n+i\alpha_2\,\lambda^{1-\ell}_n   (g_2^n)_x\right)_x-\rho_2\,\lambda^2_n y^n=\rho_2\,\lambda^{1-\ell}_n\left(\lambda_n^{-1} h_2^n+ig_2^n\right),\label{E:=(4.8)}\nline
-\kappa \theta_{xx}^n+i\rho_0\,\lambda_n\theta^n-i\gamma \lambda_n  u_{xx}^n=\lambda^{-\ell}_n\left(\rho_0\, h_3^n-\gamma\, (g_1^n)_{xx}\right).\label{E:=(4.9)}
\end{eqnarray}
In addition, we have
\begin{equation}\label{E:=(4.10)}
\beta_1\, u_{xxx}^n+\alpha_1\,\lambda^{-\ell}_n   (h_1^n)_x\in H^1\left(0,L_0\right),\quad
\beta_2\, y_{xxx}^n+\alpha_1\,\lambda^{-\ell}_n   (h_2^n)_x\in H^1\left(L_0,L\right),
\end{equation}
and
\begin{equation}\label{E:=(4.11)}
\begin{array}{lll}

\beta_1\, u_{xxx}^n(L_0)-\beta_2\, y_{xxx}^n(L_0)+ \lambda^2_n\left(\alpha_1  u_x^n(L_0)-\alpha_2 y_x^n(L_0)\right) + \gamma \theta_{x}^n(L_0)

\nline

+\lambda^{-\ell}_n\left(\alpha_1\,   (h_1^n)_x(L_0)-\alpha_2\,  (h_2^n)_x(L_0)\right)
+i\lambda^{1-\ell}_n\left(\alpha_1  (g_1^n)_x(L_0)- \alpha_2 \left(g_2^n\right)_x(L_0) \right)=0.

\end{array}
\end{equation}
\end{Lemma}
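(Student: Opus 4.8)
The plan is to unwind the abstract resolvent equation \eqref{E:=(4.6)} into its PDE form, following the same pattern as the proof of Lemma \ref{L:=3.4} but now carrying along the spectral parameter $i\lambda_n$ and the forcing $\lambda_n^{-\ell}F^n$. First I would use the first line of \eqref{E:=(4.6)} to eliminate the velocities, writing
\begin{equation*}
v^n=i\lambda_n u^n-\lambda_n^{-\ell}g_1^n,\qquad z^n=i\lambda_n y^n-\lambda_n^{-\ell}g_2^n\quad\text{in }\mathbb{W}_1,
\end{equation*}
and, applying the isomorphism $C$ to the second line of \eqref{E:=(4.6)}, record the identity $i\lambda_n C\Phi_2^n+B\Phi_2^n+A_1\Phi_1^n=\lambda_n^{-\ell}Ch^n$ in $\mathbb{V}_2'$. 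Since $\Phi^n\in D(\mathcal{A})$, Lemma \ref{L:=3.4} supplies the regularity I will need throughout: $\theta^n\in H^2(0,L_0)\cap H^1_0(0,L_0)$, $(u^n,y^n)\in\mathbb{W}_1\cap\bigl(H^3(0,L_0)\times H^3(L_0,L)\bigr)$, $(v^n,z^n)\in\mathbb{W}_1$, and the transmission relation $\beta_1u_{xx}^n(L_0)=\beta_2y_{xx}^n(L_0)$; moreover $g^n\in\mathbb{W}_1$ and $h^n\in\mathbb{V}_2$ give $g_1^n,g_2^n\in H^2$ and $h_1^n,h_2^n\in H^1$.

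Next I would pair $i\lambda_n C\Phi_2^n+B\Phi_2^n+A_1\Phi_1^n=\lambda_n^{-\ell}Ch^n$ with a test vector $Z=(\phi,\varphi,\psi)\in\mathbb{W}_2$, expand through the bilinear forms \eqref{E:=(2.6)}, and substitute the expressions above for $v^n,z^n$; this is the exact analogue of identity \eqref{E:=(3.11)}. Choosing $\phi=\varphi=0$ and $\psi\in C_c^\infty(0,L_0)$, integrating by parts (legitimate since $v^n,\theta^n\in H^2$) and invoking Corollary 4.24 of \cite{Brezis-2010}, I obtain, after substituting $v_{xx}^n=i\lambda_n u_{xx}^n-\lambda_n^{-\ell}(g_1^n)_{xx}$, the identity \eqref{E:=(4.9)} almost everywhere. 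Choosing instead $\psi=0$ and $(\phi,\varphi)$ supported compactly and separately in $(0,L_0)$ and in $(L_0,L)$, a single integration by parts (moving one derivative off $u_{xx}^n$, resp. $y_{xx}^n$) turns the identity into $-\int W_1\overline{\phi_x}=\int R_1\overline{\phi}$, where $W_1$ is precisely the bracket differentiated in \eqref{E:=(4.7)} and $R_1=\rho_1\lambda_n^2u^n+\rho_1\lambda_n^{1-\ell}(\lambda_n^{-1}h_1^n+ig_1^n)$; hence $W_1$ has distributional derivative $R_1\in L^2(0,L_0)$, and since $W_1\in L^2(0,L_0)$ as well, $W_1\in H^1(0,L_0)$ with $(W_1)_x=R_1$ a.e., which is \eqref{E:=(4.7)}. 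The computation on $(L_0,L)$ is identical and yields \eqref{E:=(4.8)}.

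The regularity claim \eqref{E:=(4.10)} is then immediate: inside $W_1\in H^1(0,L_0)$ the summands $\alpha_1\lambda_n^2u_x^n$, $\gamma\theta_x^n$, $i\alpha_1\lambda_n^{1-\ell}(g_1^n)_x$ are individually in $H^1(0,L_0)$ (because $u^n\in H^3$, $\theta^n\in H^2$, $g_1^n\in H^2$), so subtracting them leaves $\beta_1u_{xxx}^n+\alpha_1\lambda_n^{-\ell}(h_1^n)_x\in H^1(0,L_0)$, and the same on $(L_0,L)$. For \eqref{E:=(4.11)} I would return to the variational identity with $(\phi,\varphi,0)\in\mathbb{W}_2$ chosen so that $\phi(L_0)=\varphi(L_0)=1$ (this is compatible with the constraints defining $\mathbb{W}_1$, which only pin down values and first derivatives at $0$ and $L$ and the matching at $L_0$), integrate by parts on each of $(0,L_0)$ and $(L_0,L)$ using \eqref{E:=(4.7)}--\eqref{E:=(4.9)} to cancel all interior integrals, and collect what remains: the boundary terms at $x=0$ and $x=L$ vanish by the conditions built into $\mathbb{W}_2$, the terms carrying $\phi_x(L_0)=\varphi_x(L_0)$ cancel thanks to $\beta_1u_{xx}^n(L_0)=\beta_2y_{xx}^n(L_0)$ from Lemma \ref{L:=3.4}, and the remaining boundary contributions at $L_0$ (legitimate traces by \eqref{E:=(4.10)} and the embedding $H^1\hookrightarrow C$) add up to exactly \eqref{E:=(4.11)}; equivalently, \eqref{E:=(4.11)} is the transmission condition \eqref{E:=(1.10)} after the substitutions $u_{xtt}\leftrightarrow-\lambda_n^2u_x^n-i\lambda_n^{1-\ell}(g_1^n)_x-\lambda_n^{-\ell}(h_1^n)_x$ (and the analogous one for $y$).

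I expect the genuinely delicate point to be this last extraction of \eqref{E:=(4.11)}: because only $H^3$ (not $H^4$) regularity is available, neither $u_{xxx}^n$ nor $(h_1^n)_x$ need have a trace at $L_0$ on its own, so the argument must be run at the level of the combinations in \eqref{E:=(4.10)}, and the boundary-term bookkeeping — the correct signs, and the cancellations coming from the definition of $\mathbb{W}_1$ and from $\beta_1u_{xx}^n(L_0)=\beta_2y_{xx}^n(L_0)$ — has to be done with care. Everything else is a routine transcription of \eqref{E:=(4.6)} into the boundary-value problem \eqref{E:=(1.1)}--\eqref{E:=(1.10)}.
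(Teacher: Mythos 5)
Your proposal is correct and follows essentially the same route as the paper: derive the variational identity from the second line of \eqref{E:=(4.6)} after eliminating $(v^n,z^n)$, localize with compactly supported test functions to read off \eqref{E:=(4.7)}--\eqref{E:=(4.9)} and the $H^1$ regularity \eqref{E:=(4.10)}, and then test with $\phi(L_0)=\varphi(L_0)=1$, $\psi=0$, using $\beta_1u_{xx}^n(L_0)=\beta_2y_{xx}^n(L_0)$ to cancel the $\phi_x(L_0)$-terms and the interior equations to cancel the bulk integrals, leaving \eqref{E:=(4.11)}. The only cosmetic difference is that you establish $H^1$ membership for the full bracket $W_1$ and deduce \eqref{E:=(4.10)} by subtraction, whereas the paper isolates $\beta_1u_{xxx}^n+\alpha_1\lambda_n^{-\ell}(h_1^n)_x$ directly in its identity \eqref{E:=(4.16)}; the two are equivalent.
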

\begin{proof} For all $Z=\left(\phi,\varphi,\psi\right)\in \mathbb{W}_2$, using the second equation of \eqref{E:=(4.6)} and equations \eqref{E:=(2.5)} and \eqref{E:=(2.9)},  we get
\begin{equation*}
\begin{array}{lll}

0&=&\left<i\lambda_n\, \Phi_2^n+C^{-1}\left(B\Phi_2^n+A_1\Phi_1^n\right)-\lambda^{-\ell}_n\, h^n,Z\right>_{{\mathbb{V}_2}}

\nline

&=&\left<i\lambda_n\, C\Phi_2^n+B\Phi_2^n+A_1\Phi_1^n-\lambda^{-\ell}_n\, Ch^n,Z\right>_{\mathbb{V}_2'\times {\mathbb{V}_2}}

\nline

&=&i\lambda_n\,  c\left(\Phi_2^n,Z\right)+\left<B\Phi_2^n+A_1\Phi_1^n,Z\right>_{\mathbb{V}_2'\times {\mathbb{V}_2}}-\lambda^{-\ell}_n\, c\left(h^n,Z\right).
\end{array}
\end{equation*}
Since $\Phi^n\in D(\mathcal{A})$; i.e., $\Phi_2^n\in  {\mathbb{W}_2}$ and $\Phi_1^n\in  \mathbb{W}_1$, then using \eqref{E:=(2.5)}, the last equation of \eqref{E:=(2.8)}, and second-third equations of \eqref{E:=(2.8)}  in the above equation, we obtain
\begin{equation*}
\begin{array}{lll}

0&=&i\lambda_n\,  c\left(\Phi_2^n,Z\right)+\left<B\Phi_2^n+A_1\Phi_1^n,Z\right>_{\mathbb{V}_2'\times {\mathbb{V}_2}}-\lambda^{-\ell}_n\, c\left(h^n,Z\right)

\nline

&=&i\lambda_n\,  c\left(\Phi_2^n,Z\right)+\left<B\Phi_2^n+A_1\Phi_1^n,Z\right>_{\mathbb{W}_2'\times {\mathbb{W}_2}}-\lambda^{-\ell}_n\, c\left(h^n,Z\right)

\nline

&=&a\left(\left(u^n,y^n\right),\left(\phi,\varphi\right)\right)+b\left(\left(v^n,z^n,\theta^n\right),\left(\phi,\varphi,\psi\right)\right)+i\lambda_n\,  c\left(\left(v^n,z^n,\theta^n\right),\left(\phi,\varphi,\psi\right)\right)

\nline

&&\ \ -\lambda^{-\ell}_n\, c\left(\left(h_1^n,h_2^n,h_3^n\right),\left(\phi,\varphi,\psi\right)\right).
\end{array}
\end{equation*}
Consequently, from the above equation and \eqref{E:=(2.6)}, we find
\begin{equation}\label{E:=(4.12)}
\begin{array}{lll}

\beta_1\int_0^{L_0}u_{xx}^n\overline{\phi_{xx}}\, dx+\beta_2\int_{L_0}^Ly_{xx}^n\overline{\varphi_{xx}}\, dx+\gamma\int_0^{L_0}\left(v_{x}^n\overline{\psi_x}-\theta_{x}^n\overline{\phi_x}\right) dx+\kappa \int_0^{L_0}\theta_{x}^n\overline{\psi_x}\, dx

 \nline

i\lambda_n \int_0^{L_0}\left(\rho_1\,v^n\overline{\phi}+\alpha_1\, v_x^n\overline{\phi_x}\right)dx+i\lambda_n\int_{L_0}^L\left(\rho_2\,z^n\overline{\varphi}+\alpha_2\, z_x^n\overline{\varphi_x}\right)dx+i\rho_0\,\lambda_n\int_0^{L_0}\theta^n\overline{\psi}\, dx

\nline

=\rho_0\,\lambda^{-\ell}_n \int_0^{L_0}h_3^n\overline{\psi}\, dx+\lambda^{-\ell}_n  \int_0^{L_0}\left(\rho_1\,h_1^n\overline{\phi}+\alpha_1\, (h_1^n)_x\overline{\phi_x}\right)dx

\nline

+\lambda^{-\ell}_n \int_{L_0}^L\left(\rho_2\,h_2^n\overline{\varphi}+\alpha_2\, (h_2^n)_x\overline{\varphi_x}\right)dx,\quad \forall\left(\phi,\varphi,\psi\right)\in \mathbb{W}_2.

\end{array}
\end{equation}
On the other hand, from first equation of \eqref{E:=(4.6)}, we have
\begin{equation}\label{E:=(4.13)}
\left\{
\begin{array}{lll}

v^n=i\lambda_n u^n-\lambda^{-\ell}_n g_1^n\quad\text{in } H^2(0,L_0),

\nline

z^n=i\lambda_n y^n-\lambda^{-\ell}_n g_2^n\quad\text{in } H^2(L_0,L).

\end{array}
\right.
\end{equation}
Inserting the last equations in \eqref{E:=(4.12)}, we infer that
\begin{equation*}
\begin{array}{lll}

\beta_1\int_0^{L_0}u_{xx}^n\overline{\phi_{xx}}\, dx+\beta_2\int_{L_0}^Ly_{xx}^n\overline{\varphi_{xx}}\, dx+\kappa \int_0^{L_0}\theta_{x}^n\overline{\psi_x}\, dx +i\gamma \lambda_n \int_0^{L_0} u_x^n\overline{\psi_x}\,  dx-\gamma \int_0^{L_0}\theta_{x}^n\overline{\phi_x}\, dx

\nline

-\lambda^2_n \int_0^{L_0}\left(\rho_1\, u^n\overline{\phi}+\alpha_1\, u_x^n\overline{\phi_x}\right)dx
-\lambda^2_n\int_{L_0}^L\left(\rho_2\, y^n\overline{\varphi}+\alpha_2\, y_x^n\overline{\varphi_x}\right)dx
+i\rho_0\,\lambda_n\int_0^{L_0}\theta^n\overline{\psi}\, dx =\rho_0\,\lambda^{-\ell}_n \int_0^{L_0}h_3^n\overline{\psi}\, dx

\nline

 +\gamma\, \lambda^{-\ell}_n\int_0^{L_0} (g_1^n)_x\, \overline{\psi_x}\,  dx+i\lambda^{1-\ell}_n\int_0^{L_0}\left(\rho_1\, g_1^n\overline{\phi}+\alpha_1\,  (g_1^n)_x\overline{\phi_x}\right)dx
+i\lambda^{1-\ell}_n\int_{L_0}^L\left(\rho_2\, g_2^n\overline{\varphi}+\alpha_2\, \left(g_2^n\right)_x\overline{\varphi_x}\right)dx

\nline

+\lambda^{-\ell}_n  \int_0^{L_0}\left(\rho_1\, h_1^n\overline{\phi}+\alpha_1\, (h_1^n)_x\overline{\phi_x}\right)dx+\lambda^{-\ell}_n \int_{L_0}^L\left(\rho_2\, h_2^n\overline{\varphi}+\alpha_2\, (h_2^n)_x\overline{\varphi_x}\right)dx,\quad \forall\left(\phi,\varphi,\psi\right)\in \mathbb{W}_2.
\end{array}
\end{equation*}
In the above equation, using integration by parts and equation \eqref{E:=(3.8)}, we obtain
\begin{equation}\label{E:=(4.14)}
\begin{array}{lll}

&-&\int_0^{L_0}\left[\beta_1\, u_{xxx}^n+\alpha_1\,\lambda^{-\ell}_n   (h_1^n)_x\right]\overline{\phi_{x}}\, dx

\nline

&+&\int_0^{L_0}\left[ \lambda^2_n \left(\alpha_1\, u_{xx}^n-\rho_1\, u^n\right)+\gamma  \theta_{xx}^n-\rho_1\,\lambda^{-\ell}_n h_1^n+i\lambda^{1-\ell}_n\left(\alpha_1\,  (g_1^n)_{xx}-\rho_1\, g_1^n\right)\right] \overline{\phi}\,  dx

\nline

&-&\int_{L_0}^L\left[\beta_2\, y_{xxx}^n+\alpha_2\, \lambda^{-\ell}_n (h_2^n)_x\right]\overline{\varphi_{x}}\, dx

\nline

&+&\int_{L_0}^L\left[\lambda^2_n\left( \alpha_2\, y_{xx}^n-\rho_2\, y^n\right)-\rho_2\,\lambda^{-\ell}_n  h_2^n +i\lambda^{1-\ell}_n\left(\alpha_2\, \left(g_2^n\right)_{xx}-\rho_2\, g_2^n\right)\right]\overline{\varphi}\, dx

\nline

&+&\int_0^{L_0}\left[-\kappa \theta_{xx}^n+i\rho_0\, \lambda_n\theta^n-i\gamma \lambda_n  u_{xx}^n-\rho_0\,\lambda^{-\ell}_n h_3^n+\gamma\, \lambda^{-\ell}_n (g_1^n)_{xx}\right]\overline{\psi}\, dx

 \nline

&+&\beta_1\left[u_{xx}^n\overline{\phi_{x}}\right]_0^{L_0}+\beta_2\left[ y_{xx}^n\overline{\varphi_{x}}\right]_{L_0}^L

 \nline

 &-&\left[\left(\alpha_1\lambda^2_n   u_x^n+ \gamma\theta_{x}^n+i\alpha_1\lambda^{1-\ell}_n  (g_1^n)_x \right)\overline{\phi}\right]_0^{L_0}-\left[\left(\alpha_2 \lambda^2_n y_x^n+i\alpha_2\lambda^{1-\ell} \left(g_2^n\right)_x\right)\overline{\varphi}\right]_{L_0}^L

\nline

&+&\left[\left(i\gamma \lambda_n u_x^n+\kappa \theta_{x}^n-\gamma\,  (g_1^n)_{x}\right)\overline{\psi}\right]_0^{L_0}=0,\quad \forall\;\left(\phi,\varphi,\psi\right)\in \mathbb{W}_2.
\end{array}
\end{equation}
Since $\Phi^n\in D\left(\mathcal{A}\right)$ and $\left(\phi,\varphi,\psi\right)\in \mathbb{W}_2$, then from \eqref{E:=(2.12)} and \eqref{E:=(3.9)}, we have the following boundary conditions
\begin{equation*}
\left\{
\begin{array}{lll}
\phi(0)=\phi_x(0)=\varphi(L)=\varphi_x(L)=\psi(0)=\psi(L_0)=0

 \nline

\phi(L_0)=\varphi(L_0),\quad \phi_x(L_0)=\varphi(L_0),\quad \beta_1 u_{xx}^n(L_0)=y_{xx}^n(L_0).

\end{array}
\right.
\end{equation*}
Substituting the above boundary conditions in \eqref{E:=(4.14)}, we derive that
\begin{equation}\label{E:=(4.15)}
\begin{array}{lll}

&-&\int_0^{L_0}\left[\beta_1\, u_{xxx}^n+\alpha_1\,\lambda^{-\ell}_n   (h_1^n)_x\right]\overline{\phi_{x}}\, dx

\nline

&+&\int_0^{L_0}\left[ \lambda^2_n \left(\alpha_1\, u_{xx}^n-\rho_1\, u^n\right)+\gamma \theta_{xx}^n-\lambda^{-\ell}_n \rho_1\, h_1^n+i\lambda^{1-\ell}_n\left(\alpha_1\,  (g_1^n)_{xx}-\rho_1\, g_1^n\right)\right] \overline{\phi}\,  dx

\nline

&-&\int_{L_0}^L\left[\beta_2\, y_{xxx}^n+\alpha_2\, \lambda^{-\ell}_n (h_2^n)_x\right]\overline{\varphi_{x}}\, dx

\nline

&+&\int_{L_0}^L\left[\lambda^2_n\left( \alpha_2\, y_{xx}^n-\rho_2\, y^n\right)-\rho_2\,\lambda^{-\ell}_n  h_2^n +i\lambda^{1-\ell}_n\left(\alpha_2\, \left(g_2^n\right)_{xx}-\rho_2\, g_2^n\right)\right]\overline{\varphi}\, dx

\nline

 &+&\left(\alpha_2 \lambda^2_n y_x^n(L_0)+i\alpha_2\lambda^{1-\ell}_n \left(g_2^n\right)_x(L_0)-\alpha_1\lambda^2   _nu_x(L_0)- \gamma \theta_{x}^n(L_0)-i\alpha_1\lambda^{1-\ell}_n  (g_1^n)_x(L_0)\right)\overline{\phi}(L_0)

\nline

&+&\int_0^{L_0}\left[-\kappa \theta_{xx}^n+i\rho_0\,\lambda_n\theta^n-i\gamma \lambda_n  u_{xx}^n-\rho_0\,\lambda^{-\ell}_n h_3^n+\gamma\, \lambda^{-\ell}_n (g_1^n)_{xx}\right]\overline{\psi}\, dx=0,\quad \forall\;\left(\phi,\varphi,\psi\right)\in \mathbb{W}_2.
\end{array}
\end{equation}
$\bullet$ Taking $\left(\phi,\varphi,\psi\right)=\left(0,0,\psi\right)\in \left\{0\right\}\times \left\{0\right\}\times  C^{1}_c\left(0,L_0\right)\subset \mathbb{W}_2$ in equation \eqref{E:=(4.15)}, one finds
\begin{equation*}
\int_0^{L_0}\left[-\kappa \theta_{xx}^n+i\rho_0\,\lambda_n\theta^n-i\gamma \lambda_n  u_{xx}^n-\rho_0\,\lambda^{-\ell}_n h_3^n+\gamma\, \lambda^{-\ell}_n (g_1)_{xx}^n\right]\overline{\psi}\, dx=0,\quad \forall\;\psi\in  C^{1}_c\left(0,L_0\right),
\end{equation*}
and consequently, we get
\begin{equation*}
-\kappa \theta_{xx}^n+i\rho_0\,\lambda_n\theta^n-i\gamma \lambda_n  u_{xx}^n-\rho_0\,\lambda^{-\ell}_n h_3^n+\gamma\, \lambda^{-\ell}_n (g_1)_{xx}^n=0,\quad\text{a.e. } x\in (0,L_0).
\end{equation*}
Hence, \eqref{E:=(4.9)} holds true.\\[0.1in]
$\bullet$ Taking $\left(\phi,\varphi,\psi\right)=\left(\phi,0,0\right)\in C^{1}_c\left(0,L_0\right)\times \left\{0\right\}\times \left\{0\right\} \subset \mathbb{W}_2$ in equation \eqref{E:=(4.15)}, we get
\begin{equation}\label{E:=(4.16)}
\begin{array}{lll}

\int_0^{L_0}\left[\beta_1\, u_{xxx}^n+\alpha_1\,\lambda^{-\ell}_n   (h_1^n)_x\right]\overline{\phi_{x}}\, dx=

\nline

\int_0^{L_0}\left[ \lambda^2_n \left(\alpha_1\, u_{xx}^n-\rho_1\, u^n\right)+\gamma \theta_{xx}^n-\rho_1\,\lambda^{-\ell}_n h_1^n+i\lambda^{1-\ell}_n\left(\alpha_1\,  (g_1^n)_{xx}-\rho_1\, g_1^n\right)\right] \overline{\phi}\,  dx ,\quad \forall\;\phi\in  C^{1}_c\left(0,L_0\right).

\end{array}
\end{equation}
Since $\lambda^2_n \left(\alpha_1\, u_{xx}^n-\rho_1\, u^n\right)+\gamma \theta_{xx}^n-\rho_1\,\lambda^{-\ell}_n h_1^n+i\lambda^{1-\ell}_n\left(\alpha_1\,  (g_1^n)_{xx}- \rho_1\, g_1^n\right)\in L^2(0,L_0)$,  we get
\begin{equation*}
\beta_1\, u_{xxx}^n+\alpha_1\,\lambda^{-\ell}_n   (h_1^n)_x\in H^1\left(0,L_0\right).
\end{equation*}
Thus, we get the first estimation of \eqref{E:=(4.10)}. Consequently, integrating by parts \eqref{E:=(4.16)}, we obtain
\begin{equation*}
\begin{array}{lll}
\forall\; \phi\in  C^{1}_c\left(0,L_0\right):

\nline

\int_0^{L_0}\left[\left(\beta_1\, u_{xxx}^n+\alpha_1\,\lambda^{-\ell}_n   (h_1^n)_x\right)_x+ \lambda^2_n \left(\alpha_1\, u_{xx}^n-\rho_1\, u^n\right)+\gamma  \theta_{xx}^n-\rho_1\,\lambda^{-\ell}_n h_1^n+i\lambda^{1-\ell}_n\left(\alpha_1\,  (g_1^n)_{xx}-\rho_1\, g_1^n\right)\right] \overline{\phi}\,  dx=0.

\end{array}
\end{equation*}
Thus, one has
\begin{equation*}
\left(\beta_1\, u_{xxx}^n+\alpha_1\,\lambda^{-\ell}_n   (h_1^n)_x\right)_x+ \lambda^2_n \left(\alpha_1\, u_{xx}^n- \rho_1\,u^n\right)+\gamma \theta_{xx}^n-\rho_1\,\lambda^{-\ell}_n h_1^n+i\lambda^{1-\ell}_n\left(\alpha_1\,  (g_1^n)_{xx}-\rho_1\, g_1^n\right)=0, \ \text{a.e. } x\in (0,L_0).
\end{equation*}
Hence,  we derive \eqref{E:=(4.7)}.\\[0.1in]
$\bullet$ By the same way, taking $\left(\phi,\varphi,\psi\right)=\left(0,\varphi,0\right)\in \left\{0\right\}\times C^{1}_c\left(0,L_0\right)\times  \left\{0\right\} \subset \mathbb{W}_2$ in equation \eqref{E:=(4.15)}, one gets
\begin{equation*}
\beta_2\, y_{xxx}^n+\alpha_1\,\lambda^{-\ell}_n   (h_2^n)_x\in H^1\left(L_0,L\right)
\end{equation*}
and
\begin{equation*}
\left(\beta_2\, y_{xxx}^n+\alpha_2\,\lambda^{-\ell}_n   (h_2^n)_x\right)_x+ \lambda^2_n \left(\alpha_2\, y_{xx}^n- \rho_2\, y^n\right)-\rho_2\,\lambda^{-\ell}_n h_2^n+i\lambda^{1-\ell}_n\left(\alpha_2\,  (g_2^n)_{xx}-\rho_2\, g_2^n\right)=0,\quad \text{a.e. } x\in (L_0,L).
\end{equation*}
Thus, we find \eqref{E:=(4.8)} and the second estimation of \eqref{E:=(4.10)}.\\[0.1in]
$\bullet$ Taking $\phi(L_0)=\varphi(L_0)=1$ and $\psi=0$ in \eqref{E:=(4.15)}, then using integration by parts for the first and third terms, we obtain
\begin{equation*}
\begin{array}{lll}

\int_0^{L_0}\left[\left(\beta_1\, u_{xxx}^n+\alpha_1\,\lambda^{-\ell}_n   (h_1^n)_x\right)_x+ \lambda^2_n \left(\alpha_1\, u_{xx}^n-\rho_1\, u^n\right)+\gamma \theta_{xx}^n-\rho_1\,\lambda^{-\ell}_n h_1^n+i\lambda^{1-\ell}_n\left(\alpha_1\,  (g_1^n)_{xx}-\rho_1\, g_1^n\right)\right]\overline{\phi}\, dx

\nline

\int_{L_0}^L\left[\left(\beta_2\, y_{xxx}^n+\alpha_2\, \lambda^{-\ell}_n (h_2^n)_x\right)_x+\lambda^2_n\left( \alpha_2\, y_{xx}^n-\rho_2\,y^n\right)-\rho_2\,\lambda^{-\ell}_n  h_2^n +i\lambda^{1-\ell}_n\left(\alpha_2\, \left(g_2^n\right)_{xx}-\rho_2\, g_2^n\right)\right]\overline{\varphi}\, dx

\nline

-\beta_1\, u_{xxx}^n(L_0)-\alpha_1\,\lambda^{-\ell}_n   (h_1^n)_x(L_0)
+\beta_2\, y_{xxx}^n(L_0)+\alpha_2\, \lambda^{-\ell}_n (h_2^n)_x(L_0)

\nline

+\alpha_2 \lambda^2_n y_x^n(L_0)+i\alpha_2\lambda^{1-\ell}_n \left(g_2^n\right)_x(L_0)-\alpha_1\lambda^2_n  u_x^n(L_0)- \gamma \theta^n_{x}(L_0)-i\alpha_1\lambda^{1-\ell}_n  (g_1^n)_x(L_0)
=0.
\end{array}
\end{equation*}
Substituting \eqref{E:=(4.7)} and \eqref{E:=(4.8)} in the above equation, we obtain \eqref{E:=(4.11)}. The proof is thus complete.
\end{proof}
\noindent From \eqref{E:=(4.4)} and \eqref{E:=(4.13)}, we remark that
\begin{equation}\label{E:=(4.17)}
\left\{
\begin{array}{llll}

\left\|u_{xx}^n\right\|_{L^2(0,L_0)}=O\left(1\right),\quad \left\|\lambda_n\, u_{x}^n\right\|_{L^2(0,L_0)}=O\left(1\right),\quad \left\|\lambda_n\, u_{x}^n\right\|_{L^2(0,L_0)}=O\left(1\right),

\nline

\left\|y_{xx}^n\right\|_{L^2(L_0,L)}=O\left(1\right),\quad \left\|\lambda_n\, y_{x}^n\right\|_{L^2(L_0,L)}=O\left(1\right),\quad \left\|\lambda_n\, y_{x}^n\right\|_{L^2(L_0,L)}=O\left(1\right).

\end{array}
\right.
\end{equation}
From now, we denote by $m_1$ a positive constant number, such that $m_1$ independent of $n$ and $0<m_1<1$, also we denote by $K_j$ a positive constant number independent of  $n$ and $\epsilon_{j,n}$ is a positive number such that $\lim_{\lambda_n\to \infty} \epsilon_{j,n}=0.$
\begin{Lemma}\label{L:=4.4}
For all $\ell\geq0$, the solution $\left(\Phi_1^n,\Phi_2^n\right)\in D(\mathcal{A})$ of System \eqref{E:=(4.6)} satisfies the following asymptotic behavior  estimation
\begin{equation}\label{E:=(4.18)}
\int_0^{L_0}\left|\theta_x^n\right|^2 dx=o\left(\lambda_n^{-\ell}\right)\quad \text{and}\quad \int_0^{L_0}\left|\theta^n\right|^2 dx=o\left(\lambda_n^{-\ell}\right).
\end{equation}
\end{Lemma}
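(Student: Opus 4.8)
The plan is to exploit the dissipativity of $\mathcal{A}$ exactly as in \eqref{E:=(3.1)}. First I would take the inner product of \eqref{E:=(4.5)} with $\Phi^n$ in $\mathcal{H}$, obtaining
\begin{equation*}
\lambda_n^{\ell}\left<\left(i\lambda_n I-\mathcal{A}\right)\Phi^n,\Phi^n\right>_{\mathcal{H}}=\left<F^n,\Phi^n\right>_{\mathcal{H}},
\end{equation*}
and then pass to real parts. Since $\left<i\lambda_n\Phi^n,\Phi^n\right>_{\mathcal{H}}=i\lambda_n\|\Phi^n\|_{\mathcal{H}}^2$ is purely imaginary, the term $\Re\left<i\lambda_n\Phi^n,\Phi^n\right>_{\mathcal{H}}$ vanishes, while by the computation leading to \eqref{E:=(3.1)} we have $\Re\left<-\mathcal{A}\Phi^n,\Phi^n\right>_{\mathcal{H}}=\kappa\int_0^{L_0}|\theta_x^n|^2\,dx$.

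This yields
\begin{equation*}
\kappa\,\lambda_n^{\ell}\int_0^{L_0}\left|\theta_x^n\right|^2\,dx=\Re\left<F^n,\Phi^n\right>_{\mathcal{H}}\leq \left\|F^n\right\|_{\mathcal{H}}\left\|\Phi^n\right\|_{\mathcal{H}}=\left\|F^n\right\|_{\mathcal{H}},
\end{equation*}
where I used $\|\Phi^n\|_{\mathcal{H}}=1$ from \eqref{E:=(4.4)}. By \eqref{E:=(4.5)} the right-hand side is $o(1)$, hence $\int_0^{L_0}|\theta_x^n|^2\,dx=o\!\left(\lambda_n^{-\ell}\right)$, which is the first estimate in \eqref{E:=(4.18)}. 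For the second, I would invoke the Poincar\'e inequality on $(0,L_0)$: since $\theta^n\in H^1_0(0,L_0)$ (recall \eqref{E:=(3.7)}), there is a constant $C_P>0$ with $\int_0^{L_0}|\theta^n|^2\,dx\leq C_P\int_0^{L_0}|\theta_x^n|^2\,dx=o\!\left(\lambda_n^{-\ell}\right)$, completing the proof.

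There is essentially no serious obstacle here; this is the standard ``energy dissipation'' step. The only points requiring a little care are keeping track of the factor $\lambda_n^{\ell}$ correctly when taking real parts, and observing that the cross term involving $i\lambda_n$ disappears because $\|\Phi^n\|_{\mathcal{H}}^2$ is real. The estimate \eqref{E:=(4.18)} will then be used repeatedly in the subsequent lemmas to control the thermal coupling terms appearing in \eqref{E:=(4.7)} and \eqref{E:=(4.9)}.
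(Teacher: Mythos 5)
Your proposal is correct and follows essentially the same route as the paper: take the real part of the inner product of \eqref{E:=(4.5)} with $\Phi^n$, use the dissipativity identity \eqref{E:=(3.1)} to identify $-\Re\left<\mathcal{A}\Phi^n,\Phi^n\right>_{\mathcal{H}}=\kappa\int_0^{L_0}\left|\theta_x^n\right|^2dx$, bound by $\lambda_n^{-\ell}\left\|F^n\right\|_{\mathcal{H}}\left\|\Phi^n\right\|_{\mathcal{H}}$ via Cauchy--Schwarz, and finish with Poincar\'e's inequality for $\theta^n\in H^1_0(0,L_0)$. No gaps.
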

\begin{proof}
Taking the inner product of \eqref{E:=(4.5)}  with $\Phi^n$ in $\mathcal{H}$, then using Cauchy Schwarz inequality, we get
\begin{equation*}
-\Re\left<\mathcal{A}\Phi^n , \Phi^n\right>_{{\mathcal{H}}}=\Re\left<(i \lambda_n I - \mathcal{A})\Phi^n , \Phi^n\right>_{{\mathcal{H}}}  \leq \lambda_n ^{-\ell}\left\|F^n\right\|_{\mathcal{H}} \left\|\Phi^n\right\|_{\mathcal{H}}.
\end{equation*}
Now, similar to Equation \eqref{E:=(3.1)}, we have
\begin{equation}\label{E:=(4.19)}
0\leq\int_0^{L_0}\left|\theta_x^n\right|^2 dx \leq-\frac{1}{\kappa}\Re\left<\mathcal{A} \Phi^n,\Phi^n\right>_{\mathcal{H}}   \leq \epsilon_{1,n}\, \lambda_n ^{-\ell},
\end{equation}
where $\epsilon_{1,n}=\kappa^{-1}\, \left\|F^n\right\|_{\mathcal{H}} \left\|\Phi^n\right\|_{\mathcal{H}}.$  Using \eqref{E:=(4.4)} and \eqref{E:=(4.5)}, we get $\epsilon_{1,n}\to 0$. Hence, from \eqref{E:=(4.19)}, we obtain the first asymptotic estimate of \eqref{E:=(4.18)}. Since $\theta^n\in H^1_0(0,L_0),$ using \eqref{E:=(4.19)} and Poincaré's inequality, we infer that
\begin{equation}\label{E:=(4.20)}
0\leq\int_0^{L_0}\left|\theta^n\right|^2 dx\leq K_p\int_0^{L_0}\left|\theta_x^n\right|^2 dx \leq-\frac{1}{\kappa}\Re\left<\mathcal{A} \Phi^n,\Phi^n\right>_{\mathcal{H}}  \leq \epsilon_{2,n}\, \lambda_n ^{-\ell},
\end{equation}
where $K_p$ is the Poincaré constant and $\epsilon_{2,n}=K_p\epsilon_{1,n}\to 0.$ Hence, from \eqref{E:=(4.20)}, we get the second asymptotic estimate of  \eqref{E:=(4.18)}. The proof is thus complete.
\end{proof}
\begin{Lemma}\label{L:=4.5}
For all $\ell\geq0$, the solution $\left(\Phi_1^n,\Phi_2^n\right)\in D(\mathcal{A})$ of System \eqref{E:=(4.6)} satisfies the following asymptotic behavior  estimations
\begin{eqnarray}
\int_0^{L_0}\left|u_{xx}^n\right|^2 dx=o\left(\lambda^{-\ell}_n\right),\label{E:=(4.21)}\nline
\left|u_{xx}^n\right|_{\infty}=o\left(\lambda^{\frac{1-\ell}{2}}_n\right).\label{E:=(4.22)}
\end{eqnarray}
\end{Lemma}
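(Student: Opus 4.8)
The engine of the proof is the frequency--domain heat equation \eqref{E:=(4.9)}, which ties $u_{xx}^n$ to $\theta^n$ and $\theta_{xx}^n$, together with the dissipation already extracted in Lemma \ref{L:=4.4} (so that $\|\theta^n\|_{L^2(0,L_0)}$ and $\|\theta_x^n\|_{L^2(0,L_0)}$ are $o(\lambda_n^{-\ell/2})$), the beam equation \eqref{E:=(4.7)}, the regularity \eqref{E:=(4.10)}, the transmission identity \eqref{E:=(4.11)}, and the a priori bounds $\|u_{xx}^n\|=O(1)$, $\|\lambda_nu_x^n\|=O(1)$, $\|\lambda_nu^n\|=O(1)$ coming from \eqref{E:=(4.4)}--\eqref{E:=(4.17)}. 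Throughout, the Dirichlet conditions $\theta^n(0)=\theta^n(L_0)=0$ are used to discard endpoint contributions whenever $\theta^n$ sits at a boundary point. First I would record a preliminary estimate: multiplying \eqref{E:=(4.9)} by $\overline{\theta^n}$, integrating over $(0,L_0)$ and integrating by parts gives
\[
\kappa\|\theta_x^n\|^2+i\rho_0\lambda_n\|\theta^n\|^2-i\gamma\lambda_n\int_0^{L_0}u_{xx}^n\overline{\theta^n}\,dx=\lambda_n^{-\ell}\int_0^{L_0}\bigl(\rho_0h_3^n-\gamma(g_1^n)_{xx}\bigr)\overline{\theta^n}\,dx .
\]
Since $F^n\to0$ in $\mathcal H$ (hence $h_3^n\to0$ and $(g_1^n)_{xx}\to0$ in $L^2$), Lemma \ref{L:=4.4} and Cauchy--Schwarz make the right-hand side together with the first two terms $o(\lambda_n^{1-\ell})$; hence $\int_0^{L_0}u_{xx}^n\overline{\theta^n}\,dx=o(\lambda_n^{-\ell})$, and by a further integration by parts $\int_0^{L_0}u_x^n\overline{\theta_x^n}\,dx=-\int_0^{L_0}u_{xx}^n\overline{\theta^n}\,dx=o(\lambda_n^{-\ell})$.

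For \eqref{E:=(4.21)} I would then couple \eqref{E:=(4.7)} and \eqref{E:=(4.9)} by testing the beam equation against $\overline{\theta^n}$ and integrating over $(0,L_0)$: integrating by parts the leading term (its endpoint contribution drops out, $\theta^n$ being Dirichlet), integrating by parts once more in the resulting $\int_0^{L_0}u_{xxx}^n\overline{\theta_x^n}\,dx$, and finally substituting $\overline{\theta_{xx}^n}$ from \eqref{E:=(4.9)}, one removes the third derivative of $u^n$ and is led to an identity whose leading term is a multiple of $i\lambda_n\int_0^{L_0}|u_{xx}^n|^2\,dx$. The remaining terms are, on one hand, expressions $o(\lambda_n^{1-\ell})$ by Step~1, Lemma \ref{L:=4.4} and $F^n\to0$ (those containing $\lambda_n\int u_{xx}^n\overline{\theta^n}$, $\|\theta_x^n\|^2$, and the $g^n,h^n$ data with their factor $\lambda_n^{-\ell}$), and on the other hand the mixed low-order products $\lambda_n^2\int u^n\overline{\theta^n}$, $\lambda_n^2\int u_x^n\overline{\theta_x^n}$ and boundary terms at $0$ and $L_0$. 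Using \eqref{E:=(4.9)} and Step~1 once more, the mixed products reduce to purely real quadratic quantities ($\propto\lambda_n^2\|u_x^n\|^2$, $\propto\lambda_n^2\|\theta^n\|^2$) plus junction terms; taking the imaginary part of the identity annihilates these real quantities and the conjugate--symmetric partners of the leading term, leaving $\lambda_n\int_0^{L_0}|u_{xx}^n|^2\,dx$ equal to $o(\lambda_n^{1-\ell})$ plus the imaginary part of boundary terms at $x=0,L_0$, which are estimated by a one-dimensional Gagliardo--Nirenberg trace inequality together with the transmission relation \eqref{E:=(4.11)}. Dividing by $\lambda_n$ yields \eqref{E:=(4.21)}.

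For \eqref{E:=(4.22)}, one first bounds $\|u_{xxx}^n\|_{L^2(0,L_0)}$ from \eqref{E:=(4.7)}, \eqref{E:=(4.9)} and the regularity \eqref{E:=(4.10)}; combining this with \eqref{E:=(4.21)} and the interpolation inequality
\[
|f|_\infty^2\le C\,\|f\|_{L^2(0,L_0)}\bigl(\|f\|_{L^2(0,L_0)}+\|f'\|_{L^2(0,L_0)}\bigr)
\]
applied to $f=u_{xx}^n$ then gives \eqref{E:=(4.22)}.

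I expect the delicate point to be Step~2: forcing every non-leading term to be $o(\lambda_n^{1-\ell})$, and in particular handling the boundary terms at the junction $x=L_0$ through \eqref{E:=(4.11)} and sharp trace estimates, while squeezing the mixed low-order products $\lambda_n^2\int u^n\overline{\theta^n}$ and $\lambda_n^2\int u_x^n\overline{\theta_x^n}$ down to the required order. It is exactly this bookkeeping of powers of $\lambda_n$ — reflecting the fact that the thermally damped beam $(u,\theta)$ admits stronger control than the purely elastic beam $(y)$ — that produces the rate $o(\lambda_n^{-\ell})$ and is responsible for the eventual split between the cases $\ell=1$ and $\ell=2$ in Theorems \ref{T:=4.1} and \ref{T:=4.2}.
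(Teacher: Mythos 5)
Your Step~1 is correct, and your route to \eqref{E:=(4.22)} (interpolating $\left|u_{xx}^n\right|_{\infty}$ between $\left\|u_{xx}^n\right\|_{L^2(0,L_0)}$ and $\left\|u_{xxx}^n\right\|_{L^2(0,L_0)}$, the latter being $\lesssim \lambda_n\left\|u_{xx}^n\right\|_{L^2(0,L_0)}$ up to admissible errors) is exactly the paper's Lemmas \ref{L:=4.6} and \ref{L:=4.9}. The gap is in Step~2. The paper never tests the beam equation against $\overline{\theta^n}$: it works only with \eqref{E:=(4.9)}, writing $u_{xx}^n=i\kappa\gamma^{-1}\lambda_n^{-1}\theta_{xx}^n+\gamma^{-1}\rho_0\theta^n+\cdots$ and pairing with $\overline{u_{xx}^n}$, so the only dangerous contributions are $\lambda_n^{-1}\int_0^{L_0}\theta_x^n\overline{u_{xxx}^n}\,dx$ and the boundary term $\lambda_n^{-1}\left[\theta_x^n\overline{u_{xx}^n}\right]_0^{L_0}$. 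Your pairing instead forces you to confront $\rho_1\lambda_n^2\int_0^{L_0}u^n\overline{\theta^n}\,dx$ and $\alpha_1\lambda_n^2\int_0^{L_0}u_x^n\overline{\theta_x^n}\,dx$. The second one does reduce as you claim (its real part is $\rho_0\gamma^{-1}\lambda_n^2\|\theta^n\|^2$ up to $o(\lambda_n^{1-\ell})$, by the Step~1 identity). But for the first, eliminating $\overline{\theta^n}$ via \eqref{E:=(4.9)} produces, besides the real quantity proportional to $\lambda_n^2\|u_x^n\|^2$, the junction terms $\lambda_n^2 u^n(L_0)\overline{u_x^n(L_0)}$ and $\lambda_n u^n(L_0)\overline{\theta_x^n(L_0)}$, which are neither real nor small: Gagliardo--Nirenberg together with \eqref{E:=(4.17)} only gives $\left|\lambda_n u^n\right|_\infty=O(1)$ and $\left|\lambda_n u_x^n\right|_\infty=O(\lambda_n^{1/2})$, so the first junction term is merely $O(\lambda_n^{1/2})$, which is not $o(\lambda_n^{1-\ell})$ for the relevant values $\ell=1,2$; the transmission relation \eqref{E:=(4.11)} produces no cancellation here. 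The boundary term $\left[u_{xx}^n\overline{\theta_x^n}\right]_0^{L_0}$ suffers the same defect: a bare trace/interpolation bound gives only $\left|\theta_x^n\right|_\infty=o(\lambda_n^{1/2-\ell/4})$ and $\left|u_{xx}^n\right|_\infty=O(\lambda_n^{1/2})$, hence a product $o(\lambda_n^{1-\ell/4})$, which does not suffice for any $\ell>0$.

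What is missing is the paper's key device: the dedicated multiplier estimate of Lemma \ref{L:=4.8} (multiply \eqref{E:=(4.9)} by $2P\lambda_n^{-1}\theta_x^n$ with $P(x)=\cos(\pi(L_0-x)/L_0)$) yielding $\left|\theta_x^n(0)\right|^2+\left|\theta_x^n(L_0)\right|^2\lesssim\lambda_n\left\|\theta_x^n\right\|\left\|u_{xx}^n\right\|+\cdots$, combined with an \emph{absorption} strategy: every term is retained with its explicit factor of $\left\|u_{xx}^n\right\|$ and absorbed into the left-hand side by Young's inequality with a small parameter $m_1$ (the paper does this at the level of eighth powers in Lemmas \ref{L:=4.7} and \ref{L:=4.10}), rather than estimated by an a priori $o(\lambda_n^{1-\ell})$ bound. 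Without such a trace estimate, and without removing the $\lambda_n^2\int_0^{L_0}u^n\overline{\theta^n}\,dx$ term that your choice of test function introduces, Step~2 does not close.
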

 \noindent For the proof of Lemma \ref{L:=4.5}, we need the following lemmas.
\begin{Lemma}\label{L:=4.6}
For all $\ell\geq0$, the solution $\left(\Phi_1^n,\Phi_2^n\right)\in D(\mathcal{A})$ of System \eqref{E:=(4.6)} satisfies the following asymptotic behavior  estimation
\begin{equation}\label{E:=(4.23)}
\lambda^{-2}_n\left\|u_{xxx}^n\right\|_{L^2(0,L_0)}^2\leq K_1\left(1+ \lambda^{-1}_n\right)^2 \left\| u_{xx}^n\right\|_{L^2(0,L_0)}^2+\epsilon_{3,n}\, \lambda^{-\ell}_n.
\end{equation}
\end{Lemma}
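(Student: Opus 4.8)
The plan is to estimate $\lambda_n^{-2}\|u_{xxx}^n\|^2_{L^2(0,L_0)}$ by working from the first-component equation \eqref{E:=(4.7)}, rewritten so that the quantity $\beta_1 u_{xxx}^n + \alpha_1\lambda_n^{-\ell}(h_1^n)_x$ (which by \eqref{E:=(4.10)} lies in $H^1(0,L_0)$) appears as the differentiated term. First I would multiply \eqref{E:=(4.7)} by $\overline{u_{xxx}^n}$ (or more precisely by the conjugate of a suitable multiplier built from $u_x^n$ so that the boundary terms at $x=0$ vanish thanks to $u^n(0)=u_x^n(0)=0$) and integrate by parts over $(0,L_0)$. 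The leading term produces $\beta_1\|u_{xxx}^n\|^2_{L^2(0,L_0)}$ up to lower-order corrections coming from the $\alpha_1\lambda_n^{-\ell}(h_1^n)_x$ piece, which is $O(\lambda_n^{-\ell})$ in $H^1$ by \eqref{E:=(4.4)}--\eqref{E:=(4.5)}. The right-hand side $\rho_1\lambda_n^{1-\ell}(\lambda_n^{-1}h_1^n + i g_1^n)$ is $O(\lambda_n^{1-\ell})$ in $L^2$, and pairing it against $\overline{u_{xxx}^n}$ after an integration by parts (shifting a derivative off $u_{xxx}^n$ onto the data, using that $g_1^n\in\mathbb{W}_1$ and $h_1^n$ is controlled in $H^1$) yields a term that can be absorbed via Young's inequality into $\epsilon_{3,n}\lambda_n^{-\ell}$ plus a small multiple of $\|u_{xxx}^n\|^2$.

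Next I would handle the genuinely dangerous terms on the right of \eqref{E:=(4.7)}: the contributions of $\alpha_1\lambda_n^2 u_x^n$ and $\gamma\theta_x^n$ (and $i\alpha_1\lambda_n^{1-\ell}(g_1^n)_x$) inside the $(\cdot)_x$ bracket, and $\rho_1\lambda_n^2 u^n$. After integration by parts against the multiplier, the $\lambda_n^2 u_x^n$ term pairs with a first derivative of the multiplier, producing something of size $\lambda_n^2\|u_x^n\|\,\|u_{xx}^n\|$; since $\|\lambda_n u_x^n\|=O(1)$ by \eqref{E:=(4.17)} this is $O(\lambda_n)\|u_{xx}^n\|$, and after dividing the whole identity by $\lambda_n^2$ it becomes $O(\lambda_n^{-1})\|u_{xx}^n\|$, hence contributes to the $K_1(1+\lambda_n^{-1})^2\|u_{xx}^n\|^2$ term via Young. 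Similarly $\rho_1\lambda_n^2 u^n$ paired against $\overline{u_{xxx}^n}$, after moving a derivative, gives $\lambda_n^2\|u_x^n\|\,\|u_{xx}^n\|$ again (using $u^n$ is controlled by $u_x^n$ through Poincaré since $u^n(0)=0$), same bound. The thermal term $\gamma\theta_x^n$ is $o(\lambda_n^{-\ell/2})$ in $L^2$ by Lemma \ref{L:=4.4}, so its pairing with $\overline{u_{xxx}^n}$, divided by $\lambda_n^2$, is comfortably absorbed into $\epsilon_{3,n}\lambda_n^{-\ell}$ after Young. Collecting all pieces and dividing by $\beta_1$ gives \eqref{E:=(4.23)}.

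The main obstacle I anticipate is the careful bookkeeping of the boundary terms at $x=L_0$: integrating by parts against $\overline{u_{xxx}^n}$ or a related multiplier will generate boundary contributions involving $u_{xx}^n(L_0)$, $u_{xxx}^n(L_0)$, $\lambda_n^2 u_x^n(L_0)$ and $\theta_x^n(L_0)$, and these are precisely the quantities tied together by the transmission identity \eqref{E:=(4.11)}. One must either choose the multiplier to kill these (e.g.\ a cutoff supported away from $L_0$, at the cost of extra commutator terms that are themselves lower order in $\lambda_n$), or retain them and show they are harmless using \eqref{E:=(4.11)} together with trace/interpolation bounds like $|u_{xx}^n(L_0)|\le \|u_{xx}^n\|^{1/2}\|u_{xxx}^n\|^{1/2}$; the latter route risks reintroducing $\|u_{xxx}^n\|$ on the wrong side unless one is careful to keep the coefficient small enough to absorb. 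I would opt for the multiplier-with-cutoff approach to sidestep the transmission boundary entirely, accepting the interior commutator terms, which are all of the already-controlled types above. The remaining delicate point is making sure the power of $\lambda_n$ in the $(1+\lambda_n^{-1})^2$ factor is exactly right — this forces one to track that every term carrying a $\lambda_n^2$ is genuinely paired with two derivatives of $u^n$ each of which carries a compensating $\lambda_n^{-1}$ from \eqref{E:=(4.17)}, leaving at worst one uncompensated power before the overall division by $\lambda_n^2$.
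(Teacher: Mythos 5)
Your route is genuinely different from the paper's and, as written, it does not close. The paper never integrates the equation against a multiplier at all: it applies the Gagliardo--Nirenberg inequality \eqref{E:=(A.1)} to $\zeta=u_{xx}^n+\frac{\alpha_1}{\beta_1}\lambda_n^{-\ell}h_1^n$, which lies in $H^2(0,L_0)$ by \eqref{E:=(4.10)}, so that (modulo the $\lambda_n^{-\ell}(h_1^n)_x$ correction) $\left\|u_{xxx}^n\right\|\leq K_6\left\|\zeta_{xx}\right\|^{1/2}\left\|\zeta\right\|^{1/2}+K_7\left\|\zeta\right\|$; it then reads the bound $\lambda_n^{-2}\left\|\zeta_{xx}\right\|\lesssim \left\|u_{xx}^n\right\|+\lambda_n^{-1}\left\|\theta^n\right\|+o(\lambda_n^{-1-\ell})$ directly off \eqref{E:=(4.7)} and \eqref{E:=(4.9)}, using Poincar\'e to absorb $\left\|u^n\right\|$ into $\left\|u_{xx}^n\right\|$. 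This is a purely normwise argument: there is no integration by parts, hence no boundary terms, which is exactly what makes the lemma cheap.

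Two concrete problems with your version. First, the multiplier is misidentified: since \eqref{E:=(4.7)} is in divergence form, pairing with $\overline{u_{xxx}^n}$ produces $\tfrac12\bigl[\left|u_{xxx}^n\right|^2\bigr]_0^{L_0}$ rather than $\beta_1\left\|u_{xxx}^n\right\|^2$; to obtain the squared $L^2$ norm of the third derivative as the leading term you must test against $\overline{u_{xx}^n}$, not $\overline{u_{xxx}^n}$ and not something built from $u_x^n$. Second, and more seriously, the boundary terms you flag are not disposable by either of your two proposed devices. A cutoff vanishing near $L_0$ only controls $u_{xxx}^n$ on the support of the cutoff, whereas \eqref{E:=(4.23)} --- and its use in the proof of Lemma \ref{L:=4.5}, where $\int_0^{L_0}\theta_x^n\,\overline{u_{xxx}^n}\,dx$ over the whole interval appears --- requires the full norm on $(0,L_0)$. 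Keeping the boundary terms instead, the contribution at $x=L_0$ (and at $x=0$, where only $u^n$ and $u_x^n$ vanish) contains $u_{xxx}^n(L_0)\overline{u_{xx}^n(L_0)}$ and $\lambda_n^2u_x^n(L_0)\overline{u_{xx}^n(L_0)}$: the trace of $u_{xxx}^n$ costs a half power of the fourth derivative, and \eqref{E:=(4.11)} merely transfers the interface bracket to the corresponding $y$-side quantities, which are not controlled until Lemma \ref{L:=4.11}. None of these difficulties arises in the interpolation argument, so the multiplier identity should be abandoned here in favour of the Gagliardo--Nirenberg route.
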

\begin{proof} First, since $u_{xx}^n+\frac{\alpha_1}{\beta_1}\lambda^{-\ell}_n h_1^n\in H^2(0,L_0)$, then applying \eqref{E:=(A.1)}, we obtain
\begin{equation*}
\begin{array}{lll}
\left\|u_{xxx}^n+\frac{\alpha_1}{\beta_1}\lambda^{-\ell}_n (h_1^n)_x\right\|_{L^2(0,L_0)}\leq K_6 \left\|u_{xxxx}^n+\frac{\alpha_1}{\beta_1}\lambda^{-\ell}_n (h_1^n)_{xx}\right\|^{\frac{1}{2}}_{L^2(0,L_0)} \left\|u_{xx}^n+\frac{\alpha_1}{\beta_1}\lambda^{-\ell}_n h_1^n \right\|^{\frac{1}{2}}_{L^2(0,L_0)}

\nline

\hspace{5cm} +K_7 \left\|u_{xx}^n+\frac{\alpha_1}{\beta_1}\lambda^{-\ell}_n h_1^n\right\|_{L^2(0,L_0)}.

 \end{array}
\end{equation*}
Thus
\begin{equation*}
\begin{array}{lll}
\left\|u_{xxx}^n\right\|_{L^2(0,L_0)}\leq K_6 \left\|u_{xxxx}^n+\frac{\alpha_1}{\beta_1}\lambda^{-\ell}_n (h_1^n)_{xx}\right\|^{\frac{1}{2}}_{L^2(0,L_0)} \left\|u_{xx}^n+\frac{\alpha_1}{\beta_1}\lambda^{-\ell}_n h_1^n \right\|^{\frac{1}{2}}_{L^2(0,L_0)}

\nline

\hspace{5cm} +K_7 \left\|u_{xx}^n\right\|_{L^2(0,L_0)}+\frac{\alpha_1}{\beta_1}\left(K_7+1\right)\lambda^{-\ell}_n\left\| (h_1^n)_x\right\|_{L^2(0,L_0)},

 \end{array}
\end{equation*}
In the above inequality, using the first estimation of \eqref{E:=(A.6)}, we get
\begin{equation*}
\begin{array}{lll}
\left\|u_{xxx}^n\right\|_{L^2(0,L_0)}^2\leq 3K_6^2 \left\|u_{xxxx}^n+\frac{\alpha_1}{\beta_1}\lambda^{-\ell}_n (h_1^n)_{xx}^n\right\|_{L^2(0,L_0)} \left\|u_{xx}^n+\frac{\alpha_1}{\beta_1}\lambda^{-\ell}_n h_1^n \right\|_{L^2(0,L_0)}

\nline

\hspace{5cm} +3K_7^2 \left\|u_{xx}^n\right\|_{L^2(0,L_0)}^2+3\alpha_1^2 \beta_2^{-2}\left(K_7+1\right)^2\lambda^{-2\ell}_n\left\| (h_1^n)_x\right\|_{L^2(0,L_0)}^2.

 \end{array}
\end{equation*}
Consequently, one derives
\begin{equation}\label{E:=(4.24)}
\begin{array}{lll}
\lambda^{-2}_n\left\|u_{xxx}^n\right\|_{L^2(0,L_0)}^2\leq 3K_6^2 \lambda^{-2}_n\left\|u_{xxxx}^n+\frac{\alpha_1}{\beta_1}\lambda^{-\ell}_n (h_1^n)_{xx}^n\right\|_{L^2(0,L_0)} \left\|u_{xx}^n\right\|_{L^2(0,L_0)}+3K_7^2 \lambda^{-2}\left\|u_{xx}^n\right\|_{L^2(0,L_0)}^2

\nline

\hspace{2.8cm}+3K_6^2 \alpha_1\beta_1^{-1}\lambda^{-\ell-2}_n \left\|u_{xxxx}^n+\frac{\alpha_1}{\beta_1}\lambda^{-\ell}_n (h_1^n)_{xx}^n\right\|_{L^2(0,L_0)} \left\| h_1^n \right\|_{L^2(0,L_0)}

\nline

\hspace{2.8cm}+3\alpha_1^2 \beta_2^{-2}\left(K_7+1\right)^2\lambda^{-2\ell-2}_n\left\| (h_1^n)_x\right\|_{L^2(0,L_0)}^2.

 \end{array}
\end{equation}
Next,  from \eqref{E:=(4.7)} and \eqref{E:=(4.9)}, we obtain
\begin{equation}\label{E:=(4.25)}
\begin{array}{llll}

\lambda^{-2}_n\left\|\left( u_{xxx}^n+\frac{\alpha_1}{\beta_1}\,\lambda^{-\ell}_n   (h_1^n)_x\right)_x\right\|_{L^2(0,L_0)}\leq  \alpha_1 \beta_1^{-1} \left\| u_{xx}^n\right\|_{L^2(0,L_0)}+\beta_1^{-1}\rho_1\left\| u^n\right\|_{L^2(0,L_0)}

\nline

+\gamma \beta_1^{-1}\lambda^{-2}_n \left\|\theta_{xx}^n\right\|_{L^2(0,L_0)}+\beta_1^{-1}\lambda^{-1-\ell}_n\left\|\lambda^{-1}_n\rho_1 h_1^n+i\left(\rho_1g_1^n-\alpha_1\,  (g_1^n)_{xx}\right)\right\|_{L^2(0,L_0)},
\end{array}
\end{equation}
and
\begin{equation}\label{E:=(4.26)}
\begin{array}{llll}
 \gamma \beta_1^{-1}\lambda^{-2}_n\left\|\theta_{xx}^n\right\|_{L^2(0,L_0)}\leq \gamma^2 \beta_1^{-1}\kappa^{-1}  \lambda^{-1}_n \left\|u_{xx}^n\right\|_{L^2(0,L_0)}+
 \gamma \beta_1^{-1}\kappa^{-1}\rho_0\lambda^{-1}_n\left\|\theta^n\right\|_{L^2(0,L_0)}

 \nline

 \hspace{3cm}
 +\gamma \beta_1^{-1}\kappa^{-1}\lambda^{-2-\ell}_n\left\|\rho_0 h_3^n-\gamma\, (g_1^n)_{xx}\right\|_{L^2(0,L_0)}.
 \end{array}
\end{equation}
Now, substituting \eqref{E:=(4.26)} in \eqref{E:=(4.25)}, we find
\begin{equation}\label{E:=(4.27)}
\begin{array}{llll}

\lambda^{-2}_n\left\|\left( u_{xxx}^n+\frac{\alpha_1}{\beta_1}\,\lambda^{-\ell}_n   (h_1^n)_x\right)_x\right\|_{L^2(0,L_0)}\leq  \left(\alpha_1 +\gamma^2 \kappa^{-1}\lambda^{-1}_n\right)\beta_1^{-1}\left\| u_{xx}^n\right\|_{L^2(0,L_0)}+\beta_1^{-1}\rho_1\left\| u^n\right\|_{L^2(0,L_0)}

\nline

+ \gamma \beta_1^{-1}\kappa^{-1}\rho_0\lambda^{-1}_n\left\|\theta^n\right\|_{L^2(0,L_0)}+\beta_1^{-1}\lambda^{-1-\ell}_n\left\|\lambda^{-1}_n\rho_1 h_1^n+i\left(\rho_1g_1^n-\alpha_1\,  (g_1^n)_{xx}\right)\right\|_{L^2(0,L_0)}

\nline

+\gamma \beta_1^{-1}\kappa^{-1}\lambda^{-2-\ell}_n\left\|\rho_0 h_3^n-\gamma\, (g_1^n)_{xx}\right\|_{L^2(0,L_0)}.
\end{array}
\end{equation}
Since $u\in H^3(0,L_0)$ with $u(0)=u_x(0)=0$, using Poincar\'{e}'s inequality, we obtain that there exists $\tilde{K}_p>0$ independent of $n$, such that
\begin{equation*}
\left\| u^n\right\|_{L^2(0,L_0)}\leq \tilde{K}_p \left\| u_{xx}^n\right\|_{L^2(0,L_0)}.
\end{equation*}
Inserting the above inequality into \eqref{E:=(4.27)},  we derive
\begin{equation}\label{E:=(4.28)}
\begin{array}{llll}

\lambda^{-2}_n\left\|\left( u_{xxx}^n+\frac{\alpha_1}{\beta_1}\,\lambda^{-\ell}_n   (h_1^n)_x\right)_x\right\|_{L^2(0,L_0)}\leq  \left(\alpha_1 +\rho_1\tilde{K}_p+\gamma^2 \kappa^{-1}\lambda^{-1}_n\right)\beta_1^{-1} \left\| u_{xx}^n\right\|_{L^2(0,L_0)}

\nline

\hspace{6cm}+\gamma \beta_1^{-1}\kappa^{-1}\rho_0\lambda^{-1}_n\left\|\theta^n\right\|_{L^2(0,L_0)}+\epsilon_{4,n}\lambda^{-1-\ell}_n,

\end{array}
\end{equation}
where
\begin{equation*}
\epsilon_{4,n}=\beta_1^{-1}\left\|\rho_1\lambda^{-1}_n h_1^n+i\left(\rho_1g_1^n-\alpha_1\,  (g_1^n)_{xx}\right)\right\|_{L^2(0,L_0)}+\gamma \beta_1^{-1}\kappa^{-1}\lambda^{-1}_n\left\|\rho_0 h_3^n-\gamma\, (g_1^n)_{xx}\right\|_{L^2(0,L_0)}.
\end{equation*}
Since $h_1^n\to 0,\, h_3^n\to 0, \, g_1^n\to0$  and $(g_1^n)_{xx}\to 0$ in $L^2(0,L_0)$, then $\epsilon_{4,n}\to 0$. Substituting \eqref{E:=(4.28)} in \eqref{E:=(4.24)}, we get
\begin{equation}\label{E:=(4.29)}
\begin{array}{lll}
\lambda^{-2}_n\left\|u_{xxx}^n\right\|_{L^2(0,L_0)}^2\leq \left[3K_6^2 \left(\alpha_1 +\rho_1\tilde{K}_p+\gamma^2 \kappa^{-1}\lambda^{-1}_n\right)\beta_1^{-1}+3K_7^2 \lambda^{-2}_n\right] \left\| u_{xx}^n\right\|_{L^2(0,L_0)}^2

\nline

+3K_6^2 \gamma \beta_1^{-1}\kappa^{-1}\rho_0\lambda^{-1}_n\left\|\theta^n\right\|_{L^2(0,L_0)}\left\|u_{xx}^n\right\|_{L^2(0,L_0)}+\epsilon_{5,n}\, \lambda^{-\ell}_n ,
 \end{array}
\end{equation}
where
\begin{equation*}
\begin{array}{lll}

\epsilon_{5,n}=\left[3K_6^2 \epsilon_{4,n}\lambda^{-1}_n+3K_6^2 \alpha_1\beta_1^{-1}\left(\alpha_1 +\rho_1\tilde{K}_p+\gamma^2 \kappa^{-1}\lambda^{-1}_n\right)\beta_1^{-1}  \left\| h_1^n \right\|_{L^2(0,L_0)}\right]\left\|u_{xx}^n\right\|_{L^2(0,L_0)}

\nline

+3K_6^2 \alpha_1\beta_1^{-1} \gamma \beta_1^{-1}\kappa^{-1}\rho_0\lambda^{-\ell-1}_n\left\|\theta^n\right\|_{L^2(0,L_0)}\left\| h_1^n \right\|_{L^2(0,L_0)}

\nline

+3K_6^2 \alpha_1\beta_1^{-1}\epsilon_{4,n}\, \lambda^{-2\ell-1}_n  \left\| h_1^n \right\|_{L^2(0,L_0)}
+3\alpha_1^2 \beta_2^{-2}\left(K_7+1\right)^2\lambda^{-\ell-2}_n\left\| (h_1^n)_x\right\|_{L^2(0,L_0)}^2.

\end{array}
\end{equation*}
From \eqref{E:=(4.17)}, \eqref{E:=(4.20)},  the fact that $h_1^n\to 0$ and  $(h_1^n)_x\to 0$ in $L^2(0,L_0)$, we obtain $\epsilon_{5,n}\to 0.$ Next, taking $p=\lambda^{-1}_n\left\|u_{xx}^n\right\|_{L^2(0,L_0)}$ and $q=3K_6^2 \gamma \beta_1^{-1}\kappa^{-1}\rho_0\left\|\theta^n\right\|_{L^2(0,L_0)}$ in \eqref{E:=(A.3)}, then using \eqref{E:=(4.20)},  one gets
\begin{equation*}
\begin{array}{lll}

3K_6^2 \gamma \beta_1^{-1}\kappa^{-1}\rho_0\lambda^{-1}_n\left\|\theta^n\right\|_{L^2(0,L_0)}\left\|u_{xx}^n\right\|_{L^2(0,L_0)}&\leq  \frac{\lambda^{-2}_n\left\|u_{xx}^n\right\|^2}{2}+\frac{9K_6^4 \gamma^2 \beta_1^{-2}\kappa^{-2}\rho_0^2\left\|\theta^n\right\|_{L^2(0,L_0)}^2}{2}

\nline

&\leq  \frac{\lambda^{-2}_n\left\|u_{xx}^n\right\|^2}{2}+\frac{9K_6^4 \gamma^2 \beta_1^{-2}\kappa^{-2}\rho_0^2\epsilon_{2,n}}{2}\, \lambda_n ^{-\ell}.

\end{array}
\end{equation*}
Inserting the above inequality in \eqref{E:=(4.29)}, we have
\begin{equation*}
\lambda^{-2}_n\left\|u_{xxx}^n\right\|_{L^2(0,L_0)}^2\leq \left[3K_6^2 \left(\alpha_1 +\rho_1\tilde{K}_p+\gamma^2 \kappa^{-1}\lambda^{-1}_n\right)\beta_1^{-1}+\left(3K_7^2+\frac{1}{2}\right) \lambda^{-2}\right] \left\| u_{xx}\right\|_{L^2(0,L_0)}^2+\epsilon_{3,n}\, \lambda^{-\ell}_n,
\end{equation*}
where $\epsilon_{3,n}=\epsilon_{5,n}+\frac{9K_6^4 \gamma^2 \beta_1^{-2}\kappa^{-2}\rho_0\epsilon_{2,n}}{2}\to 0$. In the above inequality, let
\begin{equation*}
K_1= \max\left(3K_6^2\beta_1^{-1}\left(\alpha_1 +\rho_1\tilde{K}_p\right),3K_6^2\beta_1^{-1}\gamma^2 \kappa^{-1},3K_7^2+\frac{1}{2}\right),
\end{equation*}
it holds that
\begin{equation*}
\begin{array}{lll}
\lambda^{-2}_n\left\|u_{xxx}^n\right\|_{L^2(0,L_0)}^2\leq

  K_1\left(1+\lambda^{-1}_n+ \lambda^{-2}_n\right) \left\| u_{xx}^n\right\|_{L^2(0,L_0)}^2+\epsilon_{3,n}\, \lambda^{-\ell}_n

 \leq K_1\left(1+ \lambda^{-1}_n\right)^2 \left\| u_{xx}^n\right\|_{L^2(0,L_0)}^2+\epsilon_{3,n}\, \lambda^{-\ell}_n.

\end{array}
\end{equation*}
Hence, we get \eqref{E:=(4.23)}.
\end{proof}
\begin{Lemma}\label{L:=4.7}
For all $\ell\geq0$, the solution $\left(\Phi_1^n,\Phi_2^n\right)\in D(\mathcal{A})$ of System \eqref{E:=(4.6)} satisfies the following asymptotic behavior  estimation
\begin{equation}\label{E:=(4.30)}
K_2 \lambda^{-4}_n\left\|\theta_{x}^n\right\|_{L^2(0,L_0)}^4\left\|u_{xxx}^n\right\|_{L^2(0,L_0)}^4\leq m_1 \left\|u_{xx}^n\right\|_{L^2(0,L_0)}^8+\epsilon_{6,n}\, \lambda^{-4\ell}_n,
\end{equation}
where $K_2=64\kappa^4\gamma^{-4}$.
\end{Lemma}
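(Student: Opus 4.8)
The plan is to bound the heat–velocity coupling term in Equation \eqref{E:=(4.9)} against the plate energy, exploiting the parabolic regularization of $\theta^n$ together with the already-established smallness of $\theta_x^n$ from Lemma \ref{L:=4.4}. The quantity on the left of \eqref{E:=(4.30)} is engineered so that it will eventually be absorbed by the quadratic form $\left\|u_{xx}^n\right\|^2$; the fourth powers appear because we intend to apply Young's inequality twice (first to split a product $\left\|\theta_x^n\right\|\,\left\|u_{xxx}^n\right\|$ and once more to rebalance exponents so that the loss $o(\lambda_n^{-\ell})$ becomes $o(\lambda_n^{-4\ell})$ on the residual side). So the target inequality is really a disguised Young inequality applied to data we already control.

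First I would recall from Lemma \ref{L:=4.6} that
\begin{equation*}
\lambda^{-2}_n\left\|u_{xxx}^n\right\|_{L^2(0,L_0)}^2\leq K_1\left(1+ \lambda^{-1}_n\right)^2 \left\| u_{xx}^n\right\|_{L^2(0,L_0)}^2+\epsilon_{3,n}\, \lambda^{-\ell}_n,
\end{equation*}
and from \eqref{E:=(4.18)} that $\left\|\theta_x^n\right\|_{L^2(0,L_0)}^2=o(\lambda_n^{-\ell})$. Multiplying the first estimate by the second and using $\left(1+\lambda_n^{-1}\right)^2\leq 4$ for $\lambda_n$ large (recall $\lambda_n\to+\infty$), I get
\begin{equation*}
\lambda^{-2}_n\left\|\theta_x^n\right\|_{L^2(0,L_0)}^2\left\|u_{xxx}^n\right\|_{L^2(0,L_0)}^2\leq 4K_1\left\|\theta_x^n\right\|_{L^2(0,L_0)}^2\left\| u_{xx}^n\right\|_{L^2(0,L_0)}^2+\epsilon_{3,n}\,\lambda_n^{-\ell}\left\|\theta_x^n\right\|_{L^2(0,L_0)}^2.
\end{equation*}
Since $\left\|\theta_x^n\right\|^2\leq \epsilon_{1,n}\lambda_n^{-\ell}$ and $\left\| u_{xx}^n\right\|=O(1)$ by \eqref{E:=(4.17)}, both terms on the right are $o(\lambda_n^{-\ell})$; squaring and using $(a+b)^2\leq 2a^2+2b^2$ then yields
\begin{equation*}
\lambda^{-4}_n\left\|\theta_x^n\right\|_{L^2(0,L_0)}^4\left\|u_{xxx}^n\right\|_{L^2(0,L_0)}^4\leq 32K_1^2\left\|\theta_x^n\right\|_{L^2(0,L_0)}^4\left\| u_{xx}^n\right\|_{L^2(0,L_0)}^4+o\!\left(\lambda_n^{-2\ell}\right).
\end{equation*}
Now $\left\|\theta_x^n\right\|^4\leq \epsilon_{1,n}^2\lambda_n^{-2\ell}$, so $32K_1^2\left\|\theta_x^n\right\|^4\left\| u_{xx}^n\right\|^4\leq 32K_1^2\epsilon_{1,n}^2\lambda_n^{-2\ell}\left\| u_{xx}^n\right\|^4$; applying Young's inequality in the form $XY\leq \delta X^2 + \tfrac{1}{4\delta}Y^2$ with $X=\left\| u_{xx}^n\right\|^4$, $Y=32K_1^2\epsilon_{1,n}^2\lambda_n^{-2\ell}$ and $\delta$ chosen so the first term carries coefficient $m_1/K_2$ (with $K_2=64\kappa^4\gamma^{-4}$), I can push the $\left\| u_{xx}^n\right\|^8$ term below $m_1/K_2$ and collect everything else into $\epsilon_{6,n}\lambda_n^{-4\ell}$ after multiplying through by $K_2$; here one uses $\epsilon_{1,n}^4\to 0$ to guarantee $\epsilon_{6,n}\to 0$ and a further power of $\lambda_n^{-2\ell}$ from the residual to reach the exponent $-4\ell$. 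Multiplying by $K_2$ gives exactly \eqref{E:=(4.30)}.

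The step I expect to be most delicate is the bookkeeping of exponents: the statement demands a residual of order $\lambda_n^{-4\ell}$, whereas the naive product of \eqref{E:=(4.23)} and \eqref{E:=(4.18)} only produces $o(\lambda_n^{-\ell})$ on the outer scale, so the extra decay must be manufactured entirely from repeated use of the smallness $\left\|\theta_x^n\right\|^2=o(\lambda_n^{-\ell})$ — each factor of $\theta_x^n$ contributing a half-power of $\lambda_n^{-\ell}$ — and one must be careful that the $\left\| u_{xx}^n\right\|$ factors that are merely $O(1)$ (not small) are always paired against a genuinely small $\epsilon_{j,n}$ before any are absorbed. A secondary subtlety is that $\left\|u_{xxx}^n\right\|$ is not itself $O(1)$; only $\lambda_n^{-1}\left\|u_{xxx}^n\right\|$ is controlled by \eqref{E:=(4.23)}, so the $\lambda_n^{-4}$ prefactor in \eqref{E:=(4.30)} must be split as $(\lambda_n^{-1})^4$ and distributed one power per factor of $u_{xxx}^n$ before Lemma \ref{L:=4.6} is invoked. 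Once these alignments are respected, the inequality follows by elementary Young/Cauchy–Schwarz manipulations with no further input.
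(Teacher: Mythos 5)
Your proposal is correct and follows essentially the same route as the paper: square the estimate of Lemma \ref{L:=4.6}, multiply by $\left\|\theta_x^n\right\|_{L^2(0,L_0)}^4$, use the smallness $\left\|\theta_x^n\right\|_{L^2(0,L_0)}^2\leq \epsilon_{1,n}\lambda_n^{-\ell}$ from \eqref{E:=(4.19)}, and absorb the resulting product into $m_1\left\|u_{xx}^n\right\|_{L^2(0,L_0)}^8$ by Young's inequality \eqref{E:=(A.4)}, with the residual picking up $\lambda_n^{-4\ell}$ exactly as you indicate. The only (immaterial) difference is that you multiply by $\left\|\theta_x^n\right\|^2$ before squaring and insert the bound on $\left\|\theta_x^n\right\|^4$ before applying Young, whereas the paper squares first and keeps $\left\|\theta_x^n\right\|^8$ until after the Young step.
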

\begin{proof} First, using the first estimation of \eqref{E:=(A.6)} for \eqref{E:=(4.23)}, we get
\begin{equation*}
\lambda^{-4}_n\left\|u_{xxx}^n\right\|_{L^2(0,L_0)}^4\leq 2K_1^2\left(1+ \lambda^{-1}_n\right)^4 \left\| u_{xx}^n\right\|_{L^2(0,L_0)}^4+2\epsilon_{3,n}^2\lambda^{-2\ell}_n.
\end{equation*}
Consequently, we  have
\begin{equation}\label{E:=(4.31)}
\begin{array}{lll}

K_2\lambda^{-4}_n\left\|\theta_{x}^n\right\|_{L^2(0,L_0)}^4\left\|u_{xxx}^n\right\|_{L^2(0,L_0)}^4\leq 2K_1^2K_2\left(1+ \lambda^{-1}_n\right)^4 \left\|\theta_{x}^n\right\|_{L^2(0,L_0)}^4\left\| u_{xx}^n\right\|_{L^2(0,L_0)}^4

\nline

\hspace{5.5cm}+2\epsilon_{3,n}^2\lambda^{-2\ell}_n \left\|\theta_{x}^n\right\|_{L^2(0,L_0)}^4.

\end{array}
\end{equation}
From \eqref{E:=(4.19)}, we  obtain
\begin{equation}\label{E:=(4.32)}
2\epsilon_{3,n}^2\lambda^{-2\ell}_n \left\|\theta_{x}^n\right\|_{L^2(0,L_0)}^4\leq 2\epsilon_{1,n}^2 \epsilon_{3,n}^2\,  \lambda^{-4\ell}_n.
\end{equation}
Next, taking $a=\left\| u_{xx}^n\right\|_{L^2(0,L_0)}^4$ and $b=2K_1^2K_2\left(1+ \lambda^{-1}_n\right)^4 \left\|\theta_{x}^n\right\|_{L^2(0,L_0)}^4$ in \eqref{E:=(A.4)}, then using \eqref{E:=(4.19)}, we  derive
\begin{equation}\label{E:=(4.33)}
\begin{array}{lll}

2K_1^2K_2\left(1+ \lambda^{-1}_n\right)^4 \left\|\theta_{x}^n\right\|_{L^2(0,L_0)}^4\left\| u_{xx}^n\right\|_{L^2(0,L_0)}^4&\leq m_1 \left\| u_{xx}^n\right\|_{L^2(0,L_0)}^8+\frac{K_1^4K_2^2\left(1+ \lambda^{-1}_n\right)^8 \left\|\theta_{x}^n\right\|_{L^2(0,L_0)}^8}{m_1}

\nline

&\leq m_1 \left\| u_{xx}^n\right\|_{L^2(0,L_0)}^8+\frac{K_1^4K_2^2\left(1+ \lambda^{-1}_n\right)^8  \epsilon_{1,n}^4}{m_1}\lambda^{-4\ell}_n.

\end{array}
\end{equation}
Substituting \eqref{E:=(4.32)} and \eqref{E:=(4.33)} in \eqref{E:=(4.31)}, we get \eqref{E:=(4.30)}, where
$\epsilon_{6,n}=2\epsilon_{1,n}^2 \epsilon_{3,n}^2+\frac{K_1^4K_2^2\left(1+ \lambda^{-1}_n\right)^8  \epsilon_{1,n}^4}{m_1}\to 0$.
\end{proof}
\begin{Lemma}\label{L:=4.8}
For all $\ell\geq0$, the solution $\left(\Phi_1^n,\Phi_2^n\right)\in D(\mathcal{A})$ of System \eqref{E:=(4.6)} satisfies the following asymptotic behavior  estimation
\begin{equation}\label{E:=(4.34)}
\lambda^{-2}_n\left(\left|\theta_{x}^n(L_0)\right|^2+\left|\theta_{x}^n(L_0)\right|^2 \right)^2\leq K_3 \left\|\theta_{x}^n\right\|_{L^2(0,L_0)}^2\left\|u_{xx}^n\right\|_{L^2(0,L_0)}^2+\epsilon_{7,n}\,\lambda^{-2\ell}_n,
\end{equation}
where $K_3=16\gamma^2\kappa^{-2}.$
\end{Lemma}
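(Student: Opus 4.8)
The plan is to control the boundary traces $\theta_x^n(0)$ and $\theta_x^n(L_0)$ by a Gagliardo--Nirenberg type inequality that exploits the homogeneous Dirichlet conditions on $\theta^n$, and then to substitute the second-order equation \eqref{E:=(4.9)} so as to trade $\theta_{xx}^n$ for $\lambda_n u_{xx}^n$ modulo vanishing remainders.

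First I would establish the trace estimate. By \eqref{E:=(3.7)}, $\theta^n\in H^1_0(0,L_0)$, so $\Re\theta^n$ and $\Im\theta^n$ each vanish at $0$ and at $L_0$; Rolle's theorem then furnishes points inside $(0,L_0)$ at which $\Re\theta_x^n$, respectively $\Im\theta_x^n$, vanishes. Writing $|\Re\theta_x^n(p)|^2$ and $|\Im\theta_x^n(p)|^2$ (for $p\in\{0,L_0\}$) as integrals of $\big(|\Re\theta_x^n|^2\big)_x$, respectively $\big(|\Im\theta_x^n|^2\big)_x$, from such a point to $p$ and applying Cauchy--Schwarz yields
\[
|\theta_x^n(p)|^2\le 2\,\|\theta_x^n\|_{L^2(0,L_0)}\,\|\theta_{xx}^n\|_{L^2(0,L_0)},\qquad p\in\{0,L_0\}.
\]
Adding the two cases and squaring gives $\big(|\theta_x^n(L_0)|^2+|\theta_x^n(0)|^2\big)^2\le 16\,\|\theta_x^n\|_{L^2(0,L_0)}^2\,\|\theta_{xx}^n\|_{L^2(0,L_0)}^2$, which is the source of the constant $K_3=16\gamma^2\kappa^{-2}$.

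Next I would bound $\|\theta_{xx}^n\|_{L^2(0,L_0)}$ using \eqref{E:=(4.9)}: since $\kappa\theta_{xx}^n=i\rho_0\lambda_n\theta^n-i\gamma\lambda_n u_{xx}^n-\lambda_n^{-\ell}\big(\rho_0 h_3^n-\gamma (g_1^n)_{xx}\big)$, the triangle inequality gives
\[
\kappa\lambda_n^{-1}\|\theta_{xx}^n\|_{L^2(0,L_0)}\le \gamma\,\|u_{xx}^n\|_{L^2(0,L_0)}+\rho_0\,\|\theta^n\|_{L^2(0,L_0)}+\lambda_n^{-1-\ell}\|\rho_0 h_3^n-\gamma (g_1^n)_{xx}\|_{L^2(0,L_0)} .
\]
By \eqref{E:=(4.18)} and the convergences $h_3^n\to 0$, $(g_1^n)_{xx}\to 0$ in $L^2(0,L_0)$, the last two summands are $o(1)$; squaring, splitting the cross products with \eqref{E:=(A.3)}, and using \eqref{E:=(4.17)} ($\|u_{xx}^n\|_{L^2(0,L_0)}=O(1)$), one arrives at $\lambda_n^{-2}\|\theta_{xx}^n\|_{L^2(0,L_0)}^2\le \gamma^2\kappa^{-2}\|u_{xx}^n\|_{L^2(0,L_0)}^2+r_n$ with $r_n$ carrying an extra decaying factor. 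Multiplying the trace estimate by $\lambda_n^{-2}$, inserting this bound, and invoking \eqref{E:=(4.18)}--\eqref{E:=(4.20)} so that the explicit factor $\|\theta_x^n\|_{L^2(0,L_0)}^2=o(\lambda_n^{-\ell})$ multiplies quantities that are themselves $o(\lambda_n^{-\ell})$, one collects all the mixed terms into $\epsilon_{7,n}\lambda_n^{-2\ell}$ and reaches \eqref{E:=(4.34)}.

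The only real difficulty is the bookkeeping together with the sharpness of the constant: one has to use the homogeneous Dirichlet data on $\theta^n$ rather than a generic trace inequality, so that the trace step produces exactly $4$ (hence $16$), and one has to keep $\|\theta_x^n\|_{L^2(0,L_0)}^2$ as an explicit multiplicative factor — rather than replacing it by $\epsilon_{1,n}\lambda_n^{-\ell}$ at once — so that the leading term is precisely $K_3\|\theta_x^n\|^2\|u_{xx}^n\|^2$; one then verifies, term by term, that each remaining product of $\|\theta_x^n\|^2$ with a remainder from \eqref{E:=(4.9)}, and with $\|u_{xx}^n\|$, is of order $o(\lambda_n^{-2\ell})$ and may therefore be absorbed into $\epsilon_{7,n}\lambda_n^{-2\ell}$.
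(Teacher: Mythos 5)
Your route to the boundary traces is genuinely different from the paper's: you derive $|\theta_x^n(p)|^2\le 2\|\theta_x^n\|_{L^2(0,L_0)}\|\theta_{xx}^n\|_{L^2(0,L_0)}$ for $p\in\{0,L_0\}$ from Rolle's theorem applied to $\Re\theta^n$, $\Im\theta^n$, whereas the paper multiplies equation \eqref{E:=(4.9)} by $2P\lambda_n^{-1}\theta_x^n$ with the multiplier $P(x)=\cos\bigl(\tfrac{L_0-x}{L_0}\pi\bigr)$ (so that $P(0)=-1$, $P(L_0)=1$) and integrates by parts; the multiplier produces the two traces directly as a sum of four products, each of which is either $\|\theta_x^n\|\cdot(\text{bounded})$ or $\|\theta_x^n\|\cdot(\text{small})$, and the squaring is then done with $\bigl(\sum_{j=1}^4 a_j\bigr)^2\le 4\sum_{j=1}^4 a_j^2$, which is how $K_3=4\cdot(2\gamma\kappa^{-1})^2=16\gamma^2\kappa^{-2}$ arises with no cross terms ever appearing. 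Your trace step itself is correct.

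The gap is in the substitution step. You claim that
$\lambda_n^{-2}\|\theta_{xx}^n\|_{L^2(0,L_0)}^2\le \gamma^2\kappa^{-2}\|u_{xx}^n\|_{L^2(0,L_0)}^2+r_n$ with $r_n$ ``carrying an extra decaying factor,'' and then that every product of $\|\theta_x^n\|^2=o(\lambda_n^{-\ell})$ with a remainder is $o(\lambda_n^{-2\ell})$. Expanding $\kappa^{-2}\bigl(\gamma\|u_{xx}^n\|+\rho_0\|\theta^n\|+\lambda_n^{-1-\ell}\|\rho_0h_3^n-\gamma(g_1^n)_{xx}\|\bigr)^2$ produces the cross term $2\gamma\rho_0\kappa^{-2}\|u_{xx}^n\|\,\|\theta^n\|$, which by \eqref{E:=(4.17)} and \eqref{E:=(4.18)} is only $O(1)\cdot o(\lambda_n^{-\ell/2})=o(\lambda_n^{-\ell/2})$, not $o(\lambda_n^{-\ell})$; multiplied by $16\|\theta_x^n\|^2=o(\lambda_n^{-\ell})$ this gives $o(\lambda_n^{-3\ell/2})$, which for $\ell=1,2$ cannot be absorbed into $\epsilon_{7,n}\lambda_n^{-2\ell}$. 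Splitting this cross term by Young's inequality necessarily inflates the leading coefficient to $(1+\delta)\gamma^2\kappa^{-2}$, so the stated constant $K_3=16\gamma^2\kappa^{-2}$ is not reached by your bookkeeping. The repair is to distribute the bound on $\lambda_n^{-1}\|\theta_{xx}^n\|$ \emph{before} squaring, writing $\lambda_n^{-1}\bigl(|\theta_x^n(L_0)|^2+|\theta_x^n(0)|^2\bigr)\le 4\gamma\kappa^{-1}\|\theta_x^n\|\|u_{xx}^n\|+4\rho_0\kappa^{-1}\|\theta_x^n\|\|\theta^n\|+4\kappa^{-1}\lambda_n^{-1-\ell}\|\theta_x^n\|\|\rho_0h_3^n-\gamma(g_1^n)_{xx}\|$ and then applying $\bigl(\sum_{j=1}^3a_j\bigr)^2\le 3\sum_{j=1}^3a_j^2$: now the $\theta^n$-contribution appears as $\|\theta_x^n\|^2\|\theta^n\|^2=o(\lambda_n^{-2\ell})$ and the argument closes, but with $K_3$ replaced by $48\gamma^2\kappa^{-2}$. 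Since $K_3$ is later absorbed through the $m_1$-weighted inequality \eqref{E:=(A.5)} in Lemma \ref{L:=4.10}, this larger constant is harmless for Theorems \ref{T:=4.1} and \ref{T:=4.2}, but it does not establish the lemma with the constant as stated.
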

\begin{proof} First, let's take
\begin{equation*}
P(x)=\cos\left(\frac{L_0-x}{L_0}\pi\right).
\end{equation*}
Then, we have
\begin{equation}\label{E:=(4.35)}
P(0)=-1,\quad P(L_0)=1,\quad \left|P\right|_{\infty}=1,\quad \left|P'\right|_{\infty}=\pi\, L_0^{-1}.
\end{equation}
Next, from  \eqref{E:=(4.9)}, we have
\begin{equation*}
 \theta_{xx}^n=i\kappa^{-1}\rho_0\lambda_n\theta^n-i\gamma\kappa^{-1} \lambda_n  u_{xx}^n-\kappa^{-1}\lambda^{-\ell}_n\left(\rho_0h_3^n-\gamma\, (g_1^n)_{xx}\right).
\end{equation*}
Multiplying the above equation by $2 P\, \lambda_n^{-1}\, \theta_x^n$ in $L^2(0,L_0)$,  taking the real parts, then using integration by parts and \eqref{E:=(4.35)},  we get
\begin{equation*}
\begin{array}{lll}

\lambda_n^{-1}\left(\left|\theta_{x}^n(L_0)\right|^2+\left|\theta_{x}^n(L_0)\right|^2\right)=\lambda_n^{-1}\int_0^{L_0}P'\left|\theta_{x}^n\right|^2 dx+2\kappa^{-1}\rho_0\Re\left\{i \int_0^{L_0}P\theta^n\, \overline{\theta_x^n}\, dx\right\}

 \nline

 -2\gamma\kappa^{-1} \Re\left\{i\int_0^{L_0}  P  u_{xx}^n\overline{\theta_x^n}\, dx\right\}-2\kappa^{-1}\lambda^{-\ell-1}_n\lambda\Re\left\{ \int_0^{L_0}P\left(\rho_0h_3^n-\gamma\, (g_1^n)_{xx}\right)\overline{\theta_x^n}\, dx\right\}.

  \end{array}
\end{equation*}
Consequently, we  obtain
\begin{equation*}
\begin{array}{lll}

\lambda_n^{-1}\left(\left|\theta_{x}^n(L_0)\right|^2+\left|\theta_{x}^n(L_0)\right|^2\right)\leq \pi\, L_0^{-1}\lambda_n^{-1}\left\|\theta_{x}^n\right\|_{L^2(0,L_0)}^2 +2\kappa^{-1}\rho_0 \left\|\theta_{x}^n\right\|_{L^2(0,L_0)}\left\|\theta^n\right\|_{L^2(0,L_0)}

 \nline

 +2\gamma\kappa^{-1}  \left\|\theta_{x}^n\right\|_{L^2(0,L_0)}\left\|u_{xx}^n\right\|_{L^2(0,L_0)}+2\kappa^{-1}\lambda^{-\ell-1}_n\left\|\rho_0h_3^n-\gamma\, (g_1^n)_{xx}\right\|_{L^2(0,L_0)} \left\|\theta_{x}^n\right\|_{L^2(0,L_0)}.

  \end{array}
\end{equation*}
Thus, using \eqref{E:=(4.35)} and the first estimation of \eqref{E:=(A.6)} in the above  inequality,  one has
\begin{equation*}
\begin{array}{lll}

\lambda_n^{-2}\left(\left|\theta_{x}^n(L_0)\right|^2+\left|\theta_{x}^n(L_0)\right|^2 \right)^2\leq 16\gamma^2\kappa^{-2}  \left\|\theta_{x}^n\right\|_{L^2(0,L_0)}^2\left\|u_{xx}^n\right\|_{L^2(0,L_0)}^2+4\pi^2 L_0^{-2}\lambda_n^{-2}\left\|\theta_{x}^n\right\|_{L^2(0,L_0)}^4\nline

+6\kappa^{-2}\rho_0^2 \left\|\theta_{x}^n\right\|_{L^2(0,L_0)}^2\left\|\theta^n\right\|_{L^2(0,L_0)}^2+16\kappa^{-2}\lambda^{-2\ell-2}_n\left\|\rho_0h_3^n-\gamma\, (g_1^n)_{xx}\right\|_{L^2(0,L_0)}^2 \left\|\theta_{x}^n\right\|_{L^2(0,L_0)}^2.
  \end{array}
\end{equation*}
Consequently, we get
\begin{equation}\label{E:=(4.36)}
\lambda^{-2}_n\left(\left|\theta_{x}^n(L_0)\right|^2+\left|\theta_{x}^n(L_0)\right|^2 \right)^2\leq K_3 \left\|\theta_{x}^n\right\|_{L^2(0,L_0)}^2\left\|u_{xx}^n\right\|_{L^2(0,L_0)}^2+\epsilon_{7,n}\,\lambda^{-2\ell}_n,
\end{equation}
where
\begin{equation*}
\begin{array}{lll}
\epsilon_{7,n}=4\pi^2 L_0^{-2}\lambda_n^{-2+2\ell}\left\|\theta_{x}^n\right\|_{L^2(0,L_0)}^4

+6\kappa^{-2}\rho_0^2 \lambda_n^{2\ell}\left\|\theta_{x}^n\right\|_{L^2(0,L_0)}^2\left\|\theta^n\right\|_{L^2(0,L_0)}^2

\nline

\hspace{1cm}+16\kappa^{-2}\lambda^{-2}_n\left\|\rho_0h_3^n-\gamma\, (g_1^n)_{xx}\right\|_{L^2(0,L_0)}^2 \left\|\theta_{x}^n\right\|_{L^2(0,L_0)}^2.

  \end{array}
\end{equation*}
Using \eqref{E:=(4.19)}-\eqref{E:=(4.20)} and the fact that $h_3^n-\gamma\, (g_1)_{xx}^n\to 0$  in  $L^2(0,L_0)$, we  find
\begin{equation*}
0\leq \epsilon_{7,n}\leq 4\pi^2 L_0^{-2}\lambda_n^{-2}\epsilon_{1,n}^2+6\kappa^{-2} \epsilon_{1,n} \epsilon_{2,n}+16\kappa^{-2}\lambda^{-2-\ell}_n\epsilon_{1,n}\left\|h_3^n-\gamma\, (g_1^n)_{xx}\right\|_{L^2(0,L_0)}^2\to 0.
\end{equation*}
 Hence, from \eqref{E:=(4.36)}, we get \eqref{E:=(4.48)}.
\end{proof}
\begin{Lemma}\label{L:=4.9}
For all $\ell\geq0$, the solution $\left(\Phi_1^n,\Phi_2^n\right)\in D(\mathcal{A})$ of System \eqref{E:=(4.6)} satisfies the following asymptotic behavior  estimation
\begin{equation}\label{E:=(4.37)}
\lambda^{-2}_n\left|u_{xx}^n\right|_{\infty}^4\leq  K_4\left(1+\lambda^{-1}_n\right)^2 \left\| u_{xx}^n\right\|_{L^2(0,L_0)}^4 +\epsilon_{8,n}\,\lambda^{-2\ell}_n,
\end{equation}
\end{Lemma}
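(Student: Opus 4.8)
The plan is to control $\left|u_{xx}^n\right|_{\infty}$ by the $L^2$-norms of $u_{xx}^n$ and $u_{xxx}^n$ via a Gagliardo--Nirenberg (Agmon-type) interpolation inequality, then raise the resulting bound to the fourth power, multiply by $\lambda_n^{-2}$, and substitute the estimate \eqref{E:=(4.23)} of Lemma \ref{L:=4.6} for $\lambda_n^{-2}\left\|u_{xxx}^n\right\|_{L^2(0,L_0)}^2$.

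Concretely, since $u^n\in H^3(0,L_0)$ by \eqref{E:=(3.8)}, we have $u_{xx}^n\in H^1(0,L_0)$, and hence there is a constant $C_0>0$ independent of $n$ with
\begin{equation*}
\left|u_{xx}^n\right|_{\infty}^2\leq C_0\left(\left\|u_{xx}^n\right\|_{L^2(0,L_0)}\left\|u_{xxx}^n\right\|_{L^2(0,L_0)}+\left\|u_{xx}^n\right\|_{L^2(0,L_0)}^2\right).
\end{equation*}
Squaring this inequality (using the first estimation of \eqref{E:=(A.6)}), multiplying by $\lambda_n^{-2}$, and inserting \eqref{E:=(4.23)} gives
\begin{equation*}
\lambda_n^{-2}\left|u_{xx}^n\right|_{\infty}^4\leq 2C_0^2K_1\left(1+\lambda_n^{-1}\right)^2\left\|u_{xx}^n\right\|_{L^2(0,L_0)}^4+2C_0^2\epsilon_{3,n}\lambda_n^{-\ell}\left\|u_{xx}^n\right\|_{L^2(0,L_0)}^2+2C_0^2\lambda_n^{-2}\left\|u_{xx}^n\right\|_{L^2(0,L_0)}^4.
\end{equation*}

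It remains to bring the last two terms into the required shape. For the cross term I would apply Young's inequality \eqref{E:=(A.3)} with $p=\sqrt{\epsilon_{3,n}}\left\|u_{xx}^n\right\|_{L^2(0,L_0)}^2$ and $q=\sqrt{\epsilon_{3,n}}\lambda_n^{-\ell}$, so that $\epsilon_{3,n}\lambda_n^{-\ell}\left\|u_{xx}^n\right\|_{L^2(0,L_0)}^2\leq\tfrac12\epsilon_{3,n}\left\|u_{xx}^n\right\|_{L^2(0,L_0)}^4+\tfrac12\epsilon_{3,n}\lambda_n^{-2\ell}$; since $\epsilon_{3,n}$ is bounded, the first summand is absorbed into the leading $(1+\lambda_n^{-1})^2\left\|u_{xx}^n\right\|_{L^2(0,L_0)}^4$ term, while the second is a vanishing multiple of $\lambda_n^{-2\ell}$. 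For the last term one simply uses $\lambda_n^{-2}\leq1\leq(1+\lambda_n^{-1})^2$ (legitimate since $\lambda_n\to+\infty$). Collecting all $\left\|u_{xx}^n\right\|_{L^2(0,L_0)}^4$-contributions into a single $K_4$ and the remaining terms into $\epsilon_{8,n}=C_0^2\epsilon_{3,n}$ — which tends to $0$ by Lemma \ref{L:=4.6}, recalling also $\left\|u_{xx}^n\right\|_{L^2(0,L_0)}=O(1)$ from \eqref{E:=(4.17)} — yields exactly \eqref{E:=(4.37)}.

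The argument is routine; the only delicate point is the bookkeeping of powers of $\lambda_n$. Substituting \eqref{E:=(4.23)} produces a remainder of order $\epsilon_{3,n}\lambda_n^{-\ell}$, whereas \eqref{E:=(4.37)} asks for order $\lambda_n^{-2\ell}$, so one cannot merely discard the $O(1)$ factor $\left\|u_{xx}^n\right\|_{L^2(0,L_0)}^2$ but must trade it against an extra factor $\lambda_n^{-\ell}$ through the Young step above. One should also note that the cruder additive interpolation $\left|u_{xx}^n\right|_{\infty}^2\lesssim\left\|u_{xx}^n\right\|_{L^2(0,L_0)}^2+\left\|u_{xxx}^n\right\|_{L^2(0,L_0)}^2$ would not suffice, since after using \eqref{E:=(4.23)} it introduces an uncontrolled factor $\lambda_n^{2}$; the multiplicative interpolation inequality above is essential.
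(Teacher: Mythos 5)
Your proposal is correct and follows essentially the same route as the paper: the multiplicative Gagliardo--Nirenberg inequality \eqref{E:=(A.2)} applied to $u_{xx}^n$, raised to the fourth power via \eqref{E:=(A.6)}, combined with \eqref{E:=(4.23)}, and a Young-inequality step to convert the cross term $\epsilon_{3,n}\lambda_n^{-\ell}\left\|u_{xx}^n\right\|_{L^2(0,L_0)}^2$ into an absorbable $\left\|u_{xx}^n\right\|_{L^2(0,L_0)}^4$ piece plus an $o(1)\,\lambda_n^{-2\ell}$ remainder. The only cosmetic difference is that the paper pairs $\left\|u_{xx}^n\right\|_{L^2(0,L_0)}^2$ directly against the full coefficient $8K_8^4\epsilon_{3,n}\lambda_n^{-\ell}$ (so the absorbed term carries a fixed constant $\tfrac12$ rather than a factor $\epsilon_{3,n}$), but both versions are valid.
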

\begin{proof} First, since $u_{xx}^n\in  H^1(0,L_0)$, then applying \eqref{E:=(A.2)}, we obtain
\begin{equation*}
\left|u_{xx}^n\right|_{\infty}\leq K_8 \left\|u_{xxx}^n\right\|^{\frac{1}{2}}_{L^2(0,L_0)} \left\|u_{xx}^n\right\|^{\frac{1}{2}}_{L^2(0,L_0)} +K_9 \left\|u_{xx}^n\right\|_{L^2(0,L_0)},
\end{equation*}
consequently, using the second estimation of \eqref{E:=(A.6)}, one has
\begin{equation}\label{E:=(4.38)}
\left|u_{xx}^n\right|_{\infty}^4\leq 8K_8^4 \left\|u_{xxx}^n\right\|^{2}_{L^2(0,L_0)} \left\|u_{xx}^n\right\|^{2}_{L^2(0,L_0)} +8K_9^4 \left\|u_{xx}^n\right\|_{L^2(0,L_0)}^4.
\end{equation}
Substituting \eqref{E:=(4.23)} in \eqref{E:=(4.38)}, we  derive
\begin{equation}\label{E:=(4.39)}
\lambda_n^{-2}\left|u_{xx}^n\right|_{\infty}^4\leq \left[8K_8^4 K_1\left(1+ \lambda^{-1}_n\right)^2+8K_9^4\lambda_n^{-2} \right] \left\| u_{xx}^n\right\|_{L^2(0,L_0)}^4+8K_8^4 \epsilon_{3,n}\, \lambda^{-\ell}_n   \left\| u_{xx}^n\right\|_{L^2(0,L_0)}^2 .
\end{equation}
Taking $p=\left\|u_{xx}^n\right\|_{L^2(0,L_0)}$ and $q=8K_8^4 \epsilon_{3,n}\, \lambda^{-\ell}_n$ in \eqref{E:=(A.3)}, we  obtain
\begin{equation*}
\epsilon_{3,n}\lambda^{-\ell}_n \left\|u_{xx}^n\right\|^{2}_{L^2(0,L_0)}\leq \frac{\left\|u_{xx}^n\right\|_{L^2(0,L_0)}^4}{2}+32K_8^8 \epsilon_{3,n}^2\,\lambda^{-2\ell}_n.
\end{equation*}
Substituting the above equation in \eqref{E:=(4.39)}, we  see that
\begin{equation*}
\lambda_n^{-2}\left|u_{xx}^n\right|_{\infty}^4\leq \left[8K_8^4 K_1\left(1+ \lambda^{-1}_n\right)^2+8K_9^4\lambda_n^{-2}+\frac{1}{2} \right] \left\| u_{xx}^n\right\|_{L^2(0,L_0)}^4+32K_8^8 \epsilon_{3,n}^2\,\lambda^{-2\ell}_n.
\end{equation*}
 Hence  \eqref{E:=(4.37)}  holds true, with $K_4=2\max\left[8K_8^4 K_1,8K_9^4,\frac{1}{2}\right]$ and $\epsilon_{8,n}=32K_8^8 \epsilon_{3,n}^2\to 0.$
\end{proof}
\begin{Lemma}\label{L:=4.10}
For all $\ell\geq0$, the solution $\left(\Phi_1^n,\Phi_2^n\right)\in D(\mathcal{A})$ of System \eqref{E:=(4.6)} satisfies the following asymptotic behavior  estimation
\begin{equation}\label{E:=(4.40)}
K_5 \lambda^{-4}_n\left(\left|\theta_{x}^n\left(L_0\right)\right|^2+\left|\theta_{x}^n\left(0\right)\right|^2\right)^2 \left| {u_{xx}^n}\right|_{\infty}^4\leq 4m_1 \left\| u_{xx}^n\right\|_{L^2(0,L_0)}^8+\epsilon_{9,n}\, \lambda^{-4\ell}_n,
\end{equation}
where $K_5=256\kappa^4\gamma^{-4}.$
\end{Lemma}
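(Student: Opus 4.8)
The left side of \eqref{E:=(4.40)} is precisely $K_5$ times the product of the left-hand sides of \eqref{E:=(4.34)} and \eqref{E:=(4.37)}, so the plan is to multiply those two estimates. Writing $\Theta_n:=\left|\theta_x^n(L_0)\right|^2+\left|\theta_x^n(0)\right|^2$ for the boundary quantity occurring in \eqref{E:=(4.34)}, inserting \eqref{E:=(4.34)} and \eqref{E:=(4.37)} gives
\begin{align*}
K_5\lambda_n^{-4}\,\Theta_n^2\,\left|u_{xx}^n\right|_\infty^4
&=K_5\left(\lambda_n^{-2}\Theta_n^2\right)\left(\lambda_n^{-2}\left|u_{xx}^n\right|_\infty^4\right)\\
&\le K_5\Big(K_3\left\|\theta_x^n\right\|_{L^2(0,L_0)}^2\left\|u_{xx}^n\right\|_{L^2(0,L_0)}^2+\epsilon_{7,n}\lambda_n^{-2\ell}\Big)\Big(K_4\left(1+\lambda_n^{-1}\right)^2\left\|u_{xx}^n\right\|_{L^2(0,L_0)}^4+\epsilon_{8,n}\lambda_n^{-2\ell}\Big).
\end{align*}
Expanding the right-hand side leaves four summands: a principal one proportional to $\left(1+\lambda_n^{-1}\right)^2\left\|\theta_x^n\right\|_{L^2(0,L_0)}^2\left\|u_{xx}^n\right\|_{L^2(0,L_0)}^6$, two mixed ones each carrying an explicit factor $\epsilon_{7,n}\lambda_n^{-2\ell}$ or $\epsilon_{8,n}\lambda_n^{-2\ell}$, and the remainder $K_5\epsilon_{7,n}\epsilon_{8,n}\lambda_n^{-4\ell}$, which is already of the required type.

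I would then dispose of the other three summands one at a time. For the principal term, Young's inequality \eqref{E:=(A.4)} applied to $\left\|\theta_x^n\right\|^2\left\|u_{xx}^n\right\|^6$ with conjugate exponents $4$ and $4/3$ bounds it by $\eta\left\|u_{xx}^n\right\|_{L^2(0,L_0)}^8+C_\eta\left\|\theta_x^n\right\|_{L^2(0,L_0)}^8$; since $\left(1+\lambda_n^{-1}\right)^2$ is bounded and \eqref{E:=(4.18)}--\eqref{E:=(4.19)} give $\left\|\theta_x^n\right\|_{L^2(0,L_0)}^8\le\epsilon_{1,n}^4\lambda_n^{-4\ell}$, choosing $\eta$ small (depending only on $m_1$ and on $K_3,K_4,K_5$) makes this contribution at most $m_1\left\|u_{xx}^n\right\|_{L^2(0,L_0)}^8+\epsilon_{1,n}^4 C_\eta\lambda_n^{-4\ell}$. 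For each mixed term I would first use \eqref{E:=(4.19)} to replace the factor $\left\|\theta_x^n\right\|_{L^2(0,L_0)}^2$ (wherever it occurs) by $\epsilon_{1,n}\lambda_n^{-\ell}$ and \eqref{E:=(4.17)} to keep the remaining powers of $\left\|u_{xx}^n\right\|_{L^2(0,L_0)}$ bounded; what is left has the shape $(\text{a vanishing factor})\,\lambda_n^{-a\ell}\left\|u_{xx}^n\right\|_{L^2(0,L_0)}^b$ with $a\in\{2,3\}$ and $b\in\{2,4\}$, and one more application of Young's inequality \eqref{E:=(A.3)} turns $\lambda_n^{-a\ell}\left\|u_{xx}^n\right\|_{L^2(0,L_0)}^b$ into $\eta\left\|u_{xx}^n\right\|_{L^2(0,L_0)}^8+C_\eta\lambda_n^{-4\ell}$. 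Adding the four contributions, collecting the finitely many $\left\|u_{xx}^n\right\|_{L^2(0,L_0)}^8$--coefficients (each $\le m_1$ for $n$ large, hence $\le 4m_1$ in total) and setting $\epsilon_{9,n}$ equal to the sum of all the coefficients multiplying $\lambda_n^{-4\ell}$ yields \eqref{E:=(4.40)}; that $\epsilon_{9,n}\to0$ follows from $\epsilon_{1,n},\epsilon_{7,n},\epsilon_{8,n}\to0$.

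The one genuine difficulty is the exponent mismatch in the mixed terms: bounded crudely they carry $\lambda_n^{-2\ell}$ or $\lambda_n^{-3\ell}$ instead of the $\lambda_n^{-4\ell}$ demanded by \eqref{E:=(4.40)}, so they cannot simply be thrown into the remainder. The resolution is exactly the device already used in the proof of Lemma \ref{L:=4.7}: because $\left\|u_{xx}^n\right\|_{L^2(0,L_0)}=O(1)$ by \eqref{E:=(4.17)}, Young's inequality may trade any surplus power of $\lambda_n$ for a small multiple of $\left\|u_{xx}^n\right\|_{L^2(0,L_0)}^8$, and the vanishing prefactors $\epsilon_{1,n},\epsilon_{7,n},\epsilon_{8,n}$ force each resulting $\left\|u_{xx}^n\right\|_{L^2(0,L_0)}^8$--coefficient below $m_1$, so their sum is safely absorbed into the $4m_1\left\|u_{xx}^n\right\|_{L^2(0,L_0)}^8$ on the right-hand side. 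Everything else is routine bookkeeping.
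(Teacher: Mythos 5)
Your argument is correct and follows essentially the same route as the paper: multiply the bounds \eqref{E:=(4.34)} and \eqref{E:=(4.37)}, expand into the principal term, two mixed terms and the product of remainders, and absorb everything into $4m_1\left\|u_{xx}^n\right\|_{L^2(0,L_0)}^8+\epsilon_{9,n}\lambda_n^{-4\ell}$ using Young-type inequalities together with $\left\|\theta_x^n\right\|_{L^2(0,L_0)}^2\leq\epsilon_{1,n}\lambda_n^{-\ell}$. The only cosmetic difference is that the paper packages the absorption steps through the three-factor inequality \eqref{E:=(A.5)} (and \eqref{E:=(A.4)}), whereas you invoke the generalized two-factor Young inequality with exponents $4$ and $4/3$ — which, as you note, can also be obtained by iterating \eqref{E:=(A.4)} — so the exponent bookkeeping comes out the same.
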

\begin{proof} First, from  \eqref{E:=(4.37)} and \eqref{E:=(4.34)}, we get
\begin{equation}\label{E:=(4.41)}
\begin{array}{llll}

K_5 \lambda^{-4}_n\left(\left|\theta_{x}^n\left(L_0\right)\right|^2+\left|\theta_{x}^n\left(0\right)\right|^2\right)^2 \left| {u_{xx}^n}\right|_{\infty}^4\leq K_3 K_5 \left\|\theta_{x}^n\right\|_{L^2(0,L_0)}^2\left\|u_{xx}^n\right\|_{L^2(0,L_0)}^2\epsilon_{8,n}\, \lambda^{-2\ell}_n

\nline

+K_3   K_5 K_4\left(1+\lambda^{-1}_n\right)^2\left\|\theta_{x}^n\right\|_{L^2(0,L_0)}^2\left\|u_{xx}^n\right\|_{L^2(0,L_0)}^2 \left\| u_{xx}^n\right\|_{L^2(0,L_0)}^4

\nline

+K_5K_4\left(1+\lambda^{-1}_n\right)^2\epsilon_{7,n}\lambda^{-2\ell}_n  \left\| u_{xx}^n\right\|_{L^2(0,L_0)}^4+K_5\epsilon_{7,n} \epsilon_{8,n}\lambda^{-4\ell}_n.

\end{array}
\end{equation}
Now, taking $a=\left\| u_{xx}^n\right\|_{L^2(0,L_0)}^4$ and $K_5K_4\left(1+\lambda^{-1}_n\right)^2\epsilon_{7,n}\lambda^{-2\ell}_n $ in  \eqref{E:=(A.3)}, we  obtain
\begin{equation}\label{E:=(4.42)}
K_5K_4\left(1+\lambda^{-1}_n\right)^2\epsilon_{7,n}\, \lambda^{-2\ell}_n  \left\| u_{xx}^n\right\|_{L^2(0,L_0)}^4\leq m_1 \left\| u_{xx}^n\right\|_{L^2(0,L_0)}^8+\frac{K_5^2K_4^2\left(1+\lambda^{-1}_n\right)^4\epsilon_{7,n}^2}{4m_1} \lambda^{-4\ell}_n.
\end{equation}
Next, taking $c=K_3   K_5 K_4\left(1+\lambda^{-1}_n\right)^2\left\|\theta_{x}^n\right\|_{L^2(0,L_0)}^2$, $b=\left\|u_{xx}^n\right\|_{L^2(0,L_0)}^2 $, and $a=\left\| u_{xx}^n\right\|_{L^2(0,L_0)}^4$ in \eqref{E:=(A.5)}, then using  \eqref{E:=(4.19)}, we  find
\begin{equation}\label{E:=(4.43)}
\begin{array}{lll}

K_3   K_5 K_4\left(1+\lambda^{-1}_n\right)^2\left\|\theta_{x}^n\right\|_{L^2(0,L_0)}^2\left\|u_{xx}^n\right\|_{L^2(0,L_0)}^2 \left\| u_{xx}\right\|_{L^2(0,L_0)}^4\nline

\leq 2m_1 \left\| u_{xx}^n\right\|_{L^2(0,L_0)}^8+\frac{K_3^4   K_5^4 K_4^4\left(1+\lambda^{-1}_n\right)^8\epsilon_{1,n}^4}{64 m_1^3}\,  \lambda^{-4\ell}_n.
\end{array}
\end{equation}
Also, taking $a=\epsilon_{8,n}\lambda^{-2\ell}_n$,  $b=\left\|u_{xx}^n\right\|_{L^2(0,L_0)}^2$, and $c=16\gamma^2\kappa^{-2}  K_5 \left\|\theta_{x}^n\right\|_{L^2(0,L_0)}^2$ in \eqref{E:=(A.5)}, then using \eqref{E:=(4.19)}, we  have
\begin{equation}\label{E:=(4.44)}
\begin{array}{lll}

K_3   K_5 \left\|\theta_{x}^n\right\|_{L^2(0,L_0)}^2\left\|u_{xx}^n\right\|_{L^2(0,L_0)}^2\epsilon_{8,n}\lambda^{-2\ell}_n

\nline

\leq m_1 \left\|u_{xx}^n\right\|_{L^2(0,L_0)}^8 +m_1 \epsilon_{8,n}^2\lambda^{-4\ell}_n+\frac{1024\gamma^8\kappa^{-8}  K_5^4 \left\|\theta_{x}^n\right\|_{L^2(0,L_0)}^8}{m_1^3}

\nline

\leq m_1 \left\| u_{xx}^n\right\|_{L^2(0,L_0)}^8+\left(m_1 \epsilon_{8,n}^2+\frac{1024\gamma^8\kappa^{-8}  K_5^4 \epsilon_{1,n}^4}{m_1^3}\right) \lambda^{-4\ell}_n.

\end{array}
\end{equation}
Substituting \eqref{E:=(4.42)}-\eqref{E:=(4.44)} in \eqref{E:=(4.41)}, we get \eqref{E:=(4.40)}, where
\begin{equation*}
\epsilon_{9,n}=K_5\epsilon_{7,n} \epsilon_{8,n}+\frac{K_5^2K_4^2\left(1+\lambda^{-1}_n\right)^4\epsilon_{7,n}^2}{4m_1}+\frac{K_3^4   K_5^4 K_4^4\left(1+\lambda^{-1}_n\right)^8\epsilon_{1,n}^4}{64 m_1^3}+m_1 \epsilon_{8,n}^2+\frac{1024\gamma^8\kappa^{-8}  K_5^4 \epsilon_{1,n}^4}{m_1^3}\to 0.
\end{equation*}

\end{proof}
\noindent \textbf{Proof of Lemma \ref{L:=4.5}.} First, from  \eqref{E:=(4.9)}, we have
\begin{equation*}
  u_{xx}^n=i\kappa\gamma^{-1} \lambda^{-1}_n \theta_{xx}^n+\gamma^{-1}\rho_0  \theta^n +i\gamma^{-1}\lambda_n^{-\ell-1}\left(\rho_0 h_3^n-\gamma\, (g_1^n)_{xx}\right).
\end{equation*}
Multiplying the above equation by $u_{xx}^n$ in $L^2(0,L_0)$, then taking the real parts and using integration by parts,  we get
\begin{equation*}
\begin{array}{lll}

\left\|u_{xx}^n\right\|_{L^2(0,L_0)}^2=-\kappa\gamma^{-1} \lambda^{-1}_n\Re\left\{i\int_0^{L_0} \theta_{x}^n\,  \overline{u_{xxx}^n}\,dx\right\}+\kappa\gamma^{-1} \lambda^{-1}_n\Re\left\{i\theta_{x}^n\left(L_0\right) \overline{u_{xx}^n}\left(L_0\right)-i\theta_{x}^n\left(0\right) \overline{u_{xx}^n}\left(0\right)\right\}

 \nline

 \hspace{2cm}+\gamma^{-1} \rho_0   \Re\left\{\int_0^{L_0}\theta^n  \overline{u_{xx}^n}\,dx\right\}+\gamma^{-1}\lambda^{-\ell-1}_n\Re\left\{i\int_0^{L_0}\left(\rho_0 h_3^n-\gamma\, (g_1^n)_{xx}\right) \overline{u_{xx}^n}\,dx\right\}.

 \end{array}
\end{equation*}
Consequently, using Cauchy Schwarz inequality, we  obtain
\begin{equation}\label{E:=(4.45)}
\begin{array}{lll}

\left\|u_{xx}^n\right\|_{L^2(0,L_0)}^2\leq \kappa\gamma^{-1} \lambda^{-1}_n\left\|\theta_{x}^n\right\|_{L^2(0,L_0)} \left\|u_{xxx}^n\right\|_{L^2(0,L_0)} +\kappa\gamma^{-1} \lambda^{-1}_n\left(\left|\theta_{x}^n\left(L_0\right)\right|+\left|\theta_{x}^n\left(0\right)\right| \right)\left| {u_{xx}^n}\right|_{\infty}

 \nline

 \hspace{2cm}+\gamma^{-1}\rho_0 \left\|\theta^n\right\|_{L^2(0,L_0)} \left\|u_{xx}^n\right\|_{L^2(0,L_0)}
+\gamma^{-1}\lambda^{-\ell-1}_n \left\|\rho_0h_3^n-\gamma\, (g_1^n)_{xx}\right\|_{L^2(0,L_0)} \left\|u_{xx}^n\right\|_{L^2(0,L_0)}.

 \end{array}
\end{equation}
Using the second estimation of \eqref{E:=(A.6)} in \eqref{E:=(4.45)},  one finds
\begin{equation}\label{E:=(4.46)}
\begin{array}{lll}

 \left\|u_{xx}^n\right\|_{L^2(0,L_0)}^8\leq 64\kappa^4\gamma^{-4} \lambda^{-4}_n\left\|\theta_{x}^n\right\|_{L^2(0,L_0)}^4 \left\|u_{xxx}^n\right\|_{L^2(0,L_0)}^4 +64\kappa^4\gamma^{-4} \lambda^{-4}_n\left(\left|\theta_{x}^n\left(L_0\right)\right|+\left|\theta_{x}^n\left(0\right)\right| \right)^4\left| {u_{xx}^n}\right|_{\infty}^4

 \nline

 \hspace{1.2cm}
+64\gamma^{-4} \left\|\theta^n\right\|_{L^2(0,L_0)}^4 \left\|u_{xx}^n\right\|_{L^2(0,L_0)}^4
 +64\gamma^{-4}\rho_0^4\lambda^{-4\ell-4}_n \left\|\rho_0h_3^n-\gamma\, (g_1^n)_{xx}\right\|_{L^2(0,L_0)}^4 \left\|u_{xx}^n\right\|_{L^2(0,L_0)}^4.

 \end{array}
\end{equation}
Taking $a=\left\|u_{xx}^n\right\|_{L^2(0,L_0)}^4$ and $b=64\gamma^{-4}\rho_0^4 \left\|\theta^n\right\|_{L^2(0,L_0)}^4$ in \eqref{E:=(A.4)}, then using \eqref{E:=(4.20)}, we get
\begin{equation}\label{E:=(4.47)}
64\gamma^{-4} \left\|\theta^n\right\|_{L^2(0,L_0)}^4 \left\|u_{xx}^n\right\|_{L^2(0,L_0)}^4\leq m_1\left\|u_{xx}^n\right\|_{L^2(0,L_0)}^8+\frac{1024\gamma^{-8}\rho_0^8\epsilon_{2,n}^4}{m_1}\lambda^{-4\ell}_n.
\end{equation}
On the other hand, we have
\begin{equation}\label{E:=(4.48)}
\left(\left|\theta_{x}^n\left(L_0\right)\right|+\left|\theta_{x}^n\left(0\right)\right| \right)^4\leq 4 \left(\left|\theta_{x}^n\left(L_0\right)\right|^2+\left|\theta_{x}^n\left(0\right)\right|^2 \right)^2.
\end{equation}
Substituting \eqref{E:=(4.47)} and  \eqref{E:=(4.48)} in \eqref{E:=(4.46)}, we  infer that
\begin{equation}\label{E:=(4.49)}
\begin{array}{lll}

 \left\|u_{xx}^n\right\|_{L^2(0,L_0)}^8\leq m_1\left\|u_{xx}^n\right\|_{L^2(0,L_0)}^8+ K_2 \lambda^{-4}_n\left\|\theta_{x}^n\right\|_{L^2(0,L_0)}^4 \left\|u_{xxx}^n\right\|_{L^2(0,L_0)}^4

\nline

 +K_5 \lambda^{-4}_n\left(\left|\theta_{x}^n\left(L_0\right)\right|^2+\left|\theta_{x}^n\left(0\right)\right|^2\right)^2 \left| u_{xx}^n\right|_{\infty}^4

 \nline

 +\left[\frac{1024\rho_0^8\gamma^{-8}\epsilon_{2,n}^4}{m_1}+64\gamma^{-4}\lambda^{-4}_n \left\|\rho_0h_3^n-\gamma\, (g_1^n)_{xx}\right\|_{L^2(0,L_0)}^4 \left\|u_{xx}^n\right\|_{L^2(0,L_0)}^4\right] \lambda^{-4\ell}_n.

 \end{array}
\end{equation}
Substituting \eqref{E:=(4.30)} and \eqref{E:=(4.40)} in  \eqref{E:=(4.49)}, we obtain
\begin{equation}\label{E:=(4.50)}
\left\|u_{xx}^n\right\|_{L^2(0,L_0)}^8\leq 6m_1\left\|u_{xx}^n\right\|_{L^2(0,L_0)}^8+\epsilon_{10,n}\lambda^{-4\ell}_n,
\end{equation}
where
\begin{equation*}
\epsilon_{10,n}=\epsilon_{6,n}+\epsilon_{9,n}+\frac{1024\gamma^{-8}\rho_0^8\epsilon_{2,n}^4}{m_1}+64\gamma^{-4}\lambda^{-4}_n \left\|\rho_0h_3^n-\gamma\, (g_1^n)_{xx}\right\|_{L^2(0,L_0)}^4 \left\|u_{xx}^n\right\|_{L^2(0,L_0)}^4.
\end{equation*}
Using \eqref{E:=(4.17)} and $h_3^n-\gamma\, (g_1^n)_{xx}\to 0$ in  $L^2(0,L_0)$, we get $\epsilon_{10,n}\to0.$ Thus, from \eqref{E:=(4.50)}, we get
\begin{equation*}
\left(1-6 m_1\right)\lambda^{4\ell}_n \left\|u_{xx}^n\right\|_{L^2(0,L_0)}^8 \leq \epsilon_{10,n}.
\end{equation*}
Taking $m_1=\frac{1}{12}$,  it holds that
 \begin{equation*}
 0\leq \frac{\lambda^{4\ell}_n}{2}\left\|u_{xx}^n\right\|_{L^2(0,L_0)}^8 \leq \epsilon_{10,n}\to 0.
 \end{equation*}
Thus, we obtain \eqref{E:=(4.21)}. Finally, substituting \eqref{E:=(4.21)} in    \eqref{E:=(4.37)},  we obtain \eqref{E:=(4.22)}. The proof is thus complete.\xqed{$\square$}
\begin{Lemma}\label{L:=4.11}
For all $\ell\geq0$, the solution $\left(\Phi_1^n,\Phi_2^n\right)\in D(\mathcal{A})$ of System \eqref{E:=(4.6)} satisfies the following asymptotic behavior  estimations
\begin{eqnarray}
\int_{0}^{L_0}\left|v^n\right|^2 dx=o\left(\lambda^{\ell_1}_n\right),\quad \int_{0}^{L_0}\left|v^n_x\right|^2 dx=o\left(\lambda^{\ell_1}_n\right),\label{EQ=(4.51)}
\nline
\int_{L_0}^L\left|y_{xx}^n\right|^2 dx=o\left(\lambda^{\ell_1}_n\right),\quad \int_{L_0}^L\left|z^n\right|^2 dx=o\left(\lambda^{\ell_1}_n\right),\quad  \int_{L_0}^L\left|z_x^n\right|^2 dx=o\left(\lambda^{\ell_1}_n\right),\label{E:=(4.52)}
\end{eqnarray}
where
\begin{equation*}
\ell_1=\left\{
\begin{array}{lll}

1-\ell,&\text{if } \alpha_1\geq \alpha_2,

\nline

2-\ell,&\text{if } \alpha_1<\alpha_2.

\end{array}\right.
\end{equation*}
\end{Lemma}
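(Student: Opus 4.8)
The plan is to extract from System \eqref{E:=(4.6)} a chain of energy-type identities, the workhorse being the second equation tested against suitable multipliers, together with the already-established smallness of the thermal terms (Lemma \ref{L:=4.4}) and of the first beam (Lemma \ref{L:=4.5}). First I would use the transmission conditions at $L_0$: from Lemma \ref{L:=3.4} and \eqref{E:=(3.9)} we have $\beta_1 u_{xx}^n(L_0)=\beta_2 y_{xx}^n(L_0)$, and \eqref{E:=(4.22)} gives $|u_{xx}^n|_\infty=o(\lambda_n^{(1-\ell)/2})$, which controls $y_{xx}^n(L_0)$; likewise \eqref{E:=(4.21)} controls $\|u_{xx}^n\|_{L^2(0,L_0)}$, and \eqref{E:=(4.13)} then controls $v^n=i\lambda_n u^n-\lambda_n^{-\ell}g_1^n$ and $v_x^n$ in $L^2(0,L_0)$ (using Poincar\'e, since $u^n(0)=u_x^n(0)=0$), which already yields \eqref{EQ=(4.51)}. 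The remaining work is to propagate smallness across the interface into the second beam on $(L_0,L)$.

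For the estimates on $(L_0,L)$ I would take the inner product of equation \eqref{E:=(4.8)} with a multiplier of the form $m(x)\overline{y_x^n}$ (with $m$ linear, e.g. $m(x)=x-L_0$ or $m(x)=x-L$ chosen so that the interior terms have favorable sign and the boundary contributions at $L$ vanish thanks to \eqref{E:=(1.6)}), take real parts, and integrate by parts. This produces, up to lower-order terms, a positive combination of $\beta_2\|y_{xxx}^n\|^2$, $\alpha_2\lambda_n^2\|y_{xx}^n\|^2$ and $\rho_2\lambda_n^2\|y_x^n\|^2$ on $(L_0,L)$ plus boundary terms evaluated at $L_0$. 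The boundary terms at $L_0$ involve $y_{xx}^n(L_0)$, $y_{xxx}^n(L_0)$, $\lambda_n^2 y_x^n(L_0)$, $\lambda_n y^n(L_0)$; these are matched to the corresponding quantities on the first beam through the transmission conditions \eqref{E:=(1.7)}--\eqref{E:=(1.9)} together with the interface identity \eqref{E:=(4.11)} (which expresses the jump of $\beta u_{xxx}+\alpha\lambda^2 u_x$ in terms of $\gamma\theta_x^n(L_0)$ and source terms). Because \eqref{E:=(4.18)}, \eqref{E:=(4.21)}, \eqref{E:=(4.22)} show that all first-beam traces and the thermal trace $\theta_x^n(L_0)$ are $o(1)$ in the relevant scaling (precisely $\theta_x^n(L_0)=o(\lambda_n^{(1-\ell)/2})$ by Lemma \ref{L:=4.8} combined with \eqref{E:=(4.18)}), the interface contributions are absorbed. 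Dividing by $\lambda_n^2$ then gives $\|y_{xx}^n\|_{L^2(L_0,L)}^2=o(\lambda_n^{\ell_1})$ and $\lambda_n^2\|y_x^n\|^2=o(\lambda_n^{\ell_1+2})$; using \eqref{E:=(4.13)} again ($z^n=i\lambda_n y^n-\lambda_n^{-\ell}g_2^n$, together with Poincar\'e from the conditions $y^n(L)=y_x^n(L)=0$) yields the bounds for $z^n$ and $z_x^n$ in \eqref{E:=(4.52)}.

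The distinction between $\alpha_1\ge\alpha_2$ and $\alpha_1<\alpha_2$ enters precisely in how the interface term $\lambda_n^2(\alpha_1 u_x^n(L_0)-\alpha_2 y_x^n(L_0))$ from \eqref{E:=(4.11)} is handled: when $\alpha_1\ge\alpha_2$ one can bound $\alpha_2\lambda_n^2 y_x^n(L_0)$ by $\alpha_1\lambda_n^2 u_x^n(L_0)$ plus the (small) remaining jump, losing nothing, so $\ell_1=1-\ell$; when $\alpha_1<\alpha_2$ one cannot close this way and must instead absorb $\lambda_n^2 y_x^n(L_0)$ into the interior term $\alpha_2\lambda_n^2\|y_{xx}^n\|^2$ via a trace inequality $|y_x^n(L_0)|^2\le \varepsilon\|y_{xxx}^n\|^2+C_\varepsilon\|y_{xx}^n\|^2$ (or Gagliardo--Nirenberg as in \eqref{E:=(A.2)}), which costs one power of $\lambda_n$, giving $\ell_1=2-\ell$. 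I expect the main obstacle to be the careful bookkeeping of all the boundary terms at $L_0$ produced by the multiplier integration by parts — in particular keeping track of which ones are genuinely $o(1)$ at the required rate versus which must be absorbed into the interior energy — and choosing the multiplier $m$ (and possibly an auxiliary multiplier like $\overline{y^n}$ or $(x-L)\overline{y_{xxx}^n}$, as in the proof of Lemma \ref{L:=3.5}) so that every surviving boundary term is controllable. The coupling term $\gamma\int_0^{L_0}v_x^n\overline{\psi_x}$ / $\gamma\theta_x^n(L_0)$ is the only channel through which dissipation reaches the second beam, so the sharpness of Lemma \ref{L:=4.8} is what ultimately fixes the exponent $\ell_1$.
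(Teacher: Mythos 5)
Your overall strategy (multipliers on the second equation, transmission conditions at $L_0$, a sign dichotomy at the interface) points in the right direction, but there are concrete gaps. First, your claim that \eqref{EQ=(4.51)} ``already'' follows from \eqref{E:=(4.21)}, Poincar\'e and \eqref{E:=(4.13)} is false in the case $\alpha_1\geq\alpha_2$: those ingredients give $\lambda_n^2\int_0^{L_0}|u^n|^2dx=o\left(\lambda_n^{2-\ell}\right)$, hence $\|v^n\|^2=o\left(\lambda_n^{2-\ell}\right)$, whereas the lemma asserts $o\left(\lambda_n^{1-\ell}\right)$ there. The improved rate $\lambda_n^2\|u^n\|^2=o\left(\lambda_n^{1-\ell}\right)$ is itself an output of the multiplier identities (it is \eqref{E:=(4.68)} in the paper), not an input. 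Second, applying the multiplier $m(x)\overline{y_x^n}$ to \eqref{E:=(4.8)} \emph{only on} $(L_0,L)$ leaves a boundary term $\beta_2 y_{xxx}^n(L_0)\overline{y_x^n}(L_0)$ that you propose to control by ``matching'' to first-beam traces; but neither $y_{xxx}^n(L_0)$ nor $u_{xxx}^n(L_0)$ is controlled pointwise anywhere (Lemma \ref{L:=4.6} only bounds $\lambda_n^{-2}\|u_{xxx}^n\|_{L^2}^2$). The paper circumvents this by applying the \emph{same} multiplier to \eqref{E:=(4.7)} on $(0,L_0)$ and adding, so that the two third-derivative traces appear only through the combination in \eqref{E:=(4.11)} together with $u_x^n(L_0)=y_x^n(L_0)$, and never need to be estimated individually. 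Third, a single multiplier $m(x)\overline{y_x^n}$ with $m$ linear cannot make all interior terms favorable: integration by parts always produces $-\tfrac{\alpha_2\lambda_n^2}{2}\int m'|y_x^n|^2$ with sign opposite to the $|y^n|^2$ and $|y_{xx}^n|^2$ terms. The paper's Step 2 (multipliers $-\overline{u^n}$, $-\overline{y^n}$, giving the equipartition identity \eqref{E:=(4.54)}) is not optional: it is added to the $(x-L)$-multiplier identity \eqref{E:=(4.60)} precisely to cancel these wrong-sign terms, and it is also what ultimately yields the $\|u_x^n\|$, $\|y_x^n\|$ estimates.

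Finally, your account of where the dichotomy comes from is not right. Since $u_x^n(L_0)=y_x^n(L_0)$, there is no separate quantity $\alpha_2\lambda_n^2 y_x^n(L_0)$ to ``bound by'' $\alpha_1\lambda_n^2 u_x^n(L_0)$; what survives in the summed identity \eqref{E:=(4.67)} is the single interface term $\lambda_n^2(L-L_0)\left[(\alpha_1-\alpha_2)|u_x^n(L_0)|^2+(\rho_1-\rho_2)|u^n(L_0)|^2\right]$. When $\alpha_1\geq\alpha_2$ and $\rho_1\geq\rho_2$ this term is nonnegative and is simply dropped, so every remaining (nonnegative) term is $o\left(\lambda_n^{1-\ell}\right)$; otherwise it is moved to the right-hand side and estimated by $o\left(\lambda_n^{2-\ell}\right)$ via $|u_x^n|_\infty=o\left(\lambda_n^{-\ell/2}\right)$ from \eqref{E:=(4.53)}, with no trace inequality involving $\|y_{xxx}^n\|^2$. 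Note also that the proof's favorable case requires \emph{both} $\alpha_1\geq\alpha_2$ and $\rho_1\geq\rho_2$, consistent with Theorem \ref{T:=4.1}.
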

\begin{proof} The proof will be split into several steps:\\[0.1in]
\textbf{Step 1.} In this step, we prove the following asymptotic behavior estimates:
\begin{equation}\label{E:=(4.53)}
\int_0^{L_0}\left|u_{x}^n\right|^2 dx=o\left(\lambda^{-\ell}_n\right),\quad  \int_0^{L_0}\left|u^n\right|^2 dx=o\left(\lambda^{-\ell}_n\right),\quad \left|u_x^n\right|_{\infty}=o\left(\lambda^{-\frac{\ell}{2}}_n\right),\quad \left|u^n\right|_{\infty}=o\left(\lambda^{-\frac{\ell}{2}}_n\right).
\end{equation}
 In fact, since $u\in H^3(0,L_0)$ with $u^n(0)=u_x^n(0)=0,$ then using \eqref{E:=(4.21)}, Poincaré's inequality and trace theorem, we get \eqref{E:=(4.53)}.\\[0.1in]
\textbf{Step 2.} In this step, we prove the following asymptotic behavior estimate:
\begin{equation}\label{E:=(4.54)}
\begin{array}{lll}

\lambda^2_n \int_0^{L_0}\left(\alpha_1 \left|u_{x}^n\right|^2 + \rho_1\left|u^n\right|^2\right)  dx
-\beta_1\int_0^{L_0} \left|u_{xx}^n\right|^2 dx

\nline

+ \lambda^2_n \int_{L_0}^{L}\left(\alpha_2 \left|y_{x}^n\right|^2 +\rho_2 \left|y^n\right|^2\right)  dx
-\beta_2\int_{L_0}^{L} \left|y_{xx}^n\right|^2 dx=o\left(\lambda^{-\ell}_n\right).

\end{array}
\end{equation}
For this aim, first, multiplying \eqref{E:=(4.7)} by $-u^n$ in $L^2(0,L_0)$, then taking the real parts, using integration by parts and the fact that $u^n(0)=u_x^n(0)=0$,  we  obtain
\begin{equation}\label{E:=(4.55)}
\begin{array}{llll}

\lambda^2_n \int_0^{L_0}\left(\alpha_1 \left|u_{x}^n\right|^2 +\rho_1 \left|u^n\right|^2\right)  dx
-\beta_1\int_0^{L_0} \left|u_{xx}^n\right|^2 dx +\beta_1\Re\left\{u_{xx}^n(L_0) \overline{u}_x^n(L_0) \right\}

\nline

-\Re\left\{\left(\beta_1\, u_{xxx}^n(L_0)+\alpha_1\,\lambda^{-\ell}_n   (h_1^n)_x(L_0)+\lambda^2_n \alpha_1\, u_x^n(L_0)+\gamma  \theta_{x}^n(L_0)+i \lambda^{-\ell+1}_n (g_1^n)_x(L_0) \right)\overline{u}^n(L_0)\right\}

\nline

=-\rho_1\lambda^{-\ell}_n\Re\left\{\int_0^{L}\left(\lambda^{-1}_n h_1^n+ig_1^n\right)\, \lambda_n \overline{u}\, dx\right\}-\alpha_1\,\lambda^{-\ell}_n\Re\left\{\int_0^{L_0}   (h_1^n)_x \overline{u}_x^n\, dx\right\}

\nline

+\lambda^{-\ell}_n\Re\left\{i\int_0^{L}  (g_1^n)_{x}\, \lambda_n \overline{u}_x^n\, dx\right\}-\gamma  \Re\left\{\int_0^{L_0}\theta_{x}^n\overline{u}_x^n\, dx\right\}.

\end{array}
\end{equation}
Next, multiplying \eqref{E:=(4.8)} by $-y^n$ in $L^2(L_0,L)$, then taking the real parts, using integration by parts and the fact that $y^n(L)=y_x^n(L)=0$,  one derives
\begin{equation}\label{E:=(4.56)}
\begin{array}{llll}

\lambda^2_n \int_{L_0}^{L}\left(\alpha_2 \left|y_{x}^n\right|^2 +\rho_2 \left|y^n\right|^2\right)  dx
-\beta_2\int_{L_0}^{L} \left|y_{xx}^n\right|^2 dx -\beta_2\Re\left\{y_{xx}^n(L_0) \overline{y}_x^n(L_0) \right\}

\nline

+\Re\left\{\left(\beta_2\, y_{xxx}^n(L_0)+\alpha_2\,\lambda^{-\ell}_n   (h_2^n)_x(L_0)+\lambda^2_n \alpha_2\, y_x^n(L_0)+i \lambda^{-\ell+1}_n (g_2^n)_x(L_0) \right)\overline{y^n}(L_0)\right\}

\nline

=-\rho_2\lambda^{-\ell}_n\Re\left\{\int_{L_0}^{L}\left(\lambda^{-1}_n h_2^n+ig_2^n\right)\, \lambda_n \overline{y^n}\, dx\right\}-\alpha_2\,\lambda^{-\ell}\Re\left\{\int_{L_0}^{L}   (h_2^n)_x \overline{y^n}_x\, dx\right\}

\nline

+\lambda^{-\ell}_n\Re\left\{i\int_{L_0}^{L}  (g_2^n)_{x}\, \lambda_n \overline{y^n}_x\, dx\right\}.

\end{array}
\end{equation}
Adding \eqref{E:=(4.55)} and \eqref{E:=(4.56)}, then using \eqref{E:=(3.9)}, \eqref{E:=(4.11)}, and the fact that $u^n(L_0)=y^n(L_0),\,  u_x^n(L_0)=y_x^n(L_0),$ we  find
\begin{equation}\label{E:=(4.57)}
\begin{array}{llll}

\lambda^2_n \int_0^{L_0}\left(\alpha_1 \left|u_{x}^n\right|^2 +\rho_1 \left|u^n\right|^2\right)  dx
-\beta_1\int_0^{L_0} \left|u_{xx}^n\right|^2 dx+ \lambda^2_n \int_{L_0}^{L}\left(\alpha_2 \left|y_{x}^n\right|^2 + \rho_2\left|y^n\right|^2\right)  dx

\nline

 -\beta_2\int_{L_0}^{L} \left|y_{xx}^n\right|^2 dx
=-\rho_1\lambda^{-\ell}_n\Re\left\{\int_0^{L}\left(\lambda^{-1}_n h_1^n+ig_1^n\right)\, \lambda_n \overline{u^n}\, dx\right\}-\alpha_1\,\lambda^{-\ell}_n\Re\left\{\int_0^{L_0}   (h_1^n)_x \overline{u^n_x}\, dx\right\}

\nline

+\lambda^{-\ell}_n\Re\left\{i\int_0^{L}  (g_1^n)_{x}\, \lambda_n \overline{u^n_x}\, dx\right\}
-\rho_2\lambda^{-\ell}_n\Re\left\{\int_{L_0}^{L}\left(\lambda^{-1}_n h_2^n+ig_2^n\right)\, \lambda_n \overline{y^n}\, dx\right\}

\nline

-\alpha_2\,\lambda^{-\ell}_n\Re\left\{\int_{L_0}^{L}   (h_2^n)_x \overline{y^n_x}\, dx\right\}
+\lambda^{-\ell}_n\Re\left\{i\int_{L_0}^{L}  (g_2^n)_{x}\, \lambda_n \overline{y^n_x}\, dx\right\}
-\gamma  \Re\left\{\int_0^{L_0}\theta_{x}^n\overline{u^n_x}\, dx\right\}.

\end{array}
\end{equation}
Using \eqref{E:=(4.17)}, the fact that $h^n=\left(h_1^n,h_2^n,h_3^n\right)\to 0$  in $\mathbb{V}_2$ and $g^n=\left(g_1^n,g_2^n\right)\to 0$ in $\mathbb{W}_1$,  it holds that
\begin{equation}\label{E:=(4.58)}
\begin{array}{llll}
-\rho_1\lambda^{-\ell}_n\Re\left\{\int_0^{L}\left(\lambda^{-1}_n h_1^n+ig_1^n\right)\, \lambda_n \overline{u^n}\, dx\right\}-\alpha_1\,\lambda^{-\ell}_n\Re\left\{\int_0^{L_0}   (h_1^n)_x \overline{u^n_x}\, dx\right\}

\nline

+\lambda^{-\ell}_n\Re\left\{i\int_0^{L}  (g_1^n)_{x}\, \lambda_n \overline{u^n_x}\, dx\right\}
-\rho_2\lambda^{-\ell}_n\Re\left\{\int_{L_0}^{L}\left(\lambda^{-1}_n h_2^n+ig_2^n\right)\, \lambda_n \overline{y^n}\, dx\right\}

\nline

-\alpha_2\,\lambda^{-\ell}_n\Re\left\{\int_{L_0}^{L}   (h_2^n)_x \overline{y^n_x}\, dx\right\}

+\lambda^{-\ell}_n\Re\left\{i\int_{L_0}^{L}  (g_2^n)_{x}\, \lambda_n \overline{y^n_x}\, dx\right\}=o\left(\lambda^{-\ell}_n\right).

\end{array}
\end{equation}
On the other hand, from \eqref{E:=(4.18)} and \eqref{E:=(4.53)}, we  infer
\begin{equation}\label{E:=(4.59)}
-\gamma  \Re\left\{\int_0^{L_0}\theta_{x}^n\overline{u^n_x}\, dx\right\}= o\left(\lambda^{-\ell}_n\right).
\end{equation}
Substituting \eqref{E:=(4.58)} and \eqref{E:=(4.59)}  in \eqref{E:=(4.57)}, we obtain \eqref{E:=(4.54)}. \\[0.1in]
\textbf{Step 3.} In this step, we prove the following asymptotic behavior estimate:
\begin{equation}\label{E:=(4.60)}
\begin{array}{lllll}

  \lambda^2_n \int_0^{L_0}\left(-\alpha_1\left|u_x^n\right|^2+\rho_1 \left|u^n\right|^2       \right)  dx+ 3\beta_1 \int_0^{L_0} \left|u_{xx}^n\right|^2  dx+ \lambda^2_n\int_{L_0}^L\left( -\alpha_2\left|y_x^n\right|^2 + \rho_2  \left|y^n\right|^2   \right)  dx

 \nline

  +3\beta_2 \int_{L_0}^L  \left|y_{xx}^n\right|^2 dx
 +\lambda^2_n \left(L-L_0\right)\left[\left(\alpha_1-\alpha_2\right)\left|u_x^n(L_0)\right|^2 +\left(\rho_1-\rho_2\right)\left|u^n(L_0)\right|^2\right] =o\left(\lambda^{1-\ell}_n\right).

\end{array}
\end{equation}
For this aim, first, multiplying \eqref{E:=(4.7)} by $2\left(x-L\right) u_{x}$ in $L^2(0,L_0)$, taking the real parts, then using integration by parts and the fact that $u^n(0)=u_x^n(0)=0$,  we get
\begin{equation}\label{E:=(4.61)}
\begin{array}{lllll}

\lambda^2_n \int_0^{L_0}\left(-\alpha_1\left|u_x^n\right|^2+\rho_1 \left|u^n\right|^2\right)  dx+ 3\beta_1 \int_0^{L_0} \left|u_{xx}^n\right|^2  dx

\nline

-2\left(L-L_0\right)\Re\bigg\{\big(\beta_1\, u_{xxx}^n(L_0)+\alpha_1\,\lambda^{-\ell}_n  \left(  (h_1^n)_x(L_0)+i\lambda_n (g_1^n)_x(L_0)\right)

\nline

\hspace{2.7cm}+\alpha_1\lambda^2_n  u_x(L_0)+\gamma \theta_{x}^n(L_0)\big)\overline{u_x^n}(L_0)\bigg\}
-2\beta_1\Re\left\{ u_{xx}^n(L_0) \overline{u_x^n}(L_0)\right\}

\nline

+\beta_1\left(L-L_0\right) \left|u_{xx}^n(L_0)\right|^2-\beta_1\, L \left|u_{xx}^n(0)\right|^2

\nline

+\lambda^2_n\left(L-L_0\right)\left(\alpha_1\left|u_x^n(L_0)\right|^2 +\rho_1 \left|u^n(L_0)\right|^2\right)=2\rho_1\lambda^{-\ell}_n\Re\left\{\int_0^{L_0}\left(x-L\right) \left(\lambda^{-1}_n h_1^n+i g_1^n\right)  \lambda_n\overline{u_x^n}\, dx\right\}

\nline

+2\alpha_1 \lambda^{-\ell}_n\Re\left\{ \int_0^{L_0}   (h_1^n)_x\left(\left(x-L\right)\overline{u_x^n}\right)_x\, dx\right\}+2\alpha_1\lambda^{1-\ell}_n\Re\left\{i \int_0^{L_0} (g_1^n)_x\left(\left(x-L\right) \overline{u_x^n}\right)_x\, dx\right\}

\nline

+2\gamma \Re\left\{\int_0^{L_0} \theta_{x}^n\left(\left(x-L\right) \overline{u_x^n}\right)_x\, dx\right\} .

\end{array}
\end{equation}
Next, multiplying \eqref{E:=(4.8)} by $2\left(x-L\right) y_{x}^n$ in $L^2(L_0,L)$, taking the real parts, then using integration by parts and the fact that $y^n(L)=y_x^n(L)=0$,  we obtain
\begin{equation}\label{E:=(4.62)}
\begin{array}{lllll}

 \lambda^2_n\int_{L_0}^L\left( -\alpha_2\left|y_x^n\right|^2 +   \left|y^n\right|^2   \right)  dx+3\beta_2 \int_{L_0}^L  \left|y_{xx}^n\right|^2 dx

\nline

+2\left(L-L_0\right)\Re\left\{\left(\beta_2\, y_{xxx}^n(L_0)+\alpha_2\,\lambda^{-\ell}_n \left(  (h_2)_x^n(L_0)+i\lambda_n (g_2^n)_x(L_0)\right)+\alpha_2\lambda^2_n  y_x^n(L_0)\right)\overline{y_x^n}(L_0)\right\}

\nline

  +2\beta_2 \Re\left\{y_{xx}^n(L_0) \overline{y_x^n}(L_0)\right\}-\beta_2\left(L-L_0\right) \left|y_{xx}^n(L_0)\right|^2 -\lambda^2_n \left(L-L_0\right)\left(\alpha_2\left|y_x^n(L_0)\right|^2 +\rho_2 \left|y^n(L_0)\right|^2\right)

\nline

=2\rho_2\lambda^{-\ell}_n\Re\left\{\int_{L_0}^L\left(x-L\right)\left(\lambda^{-1}_n h_2^n+i g_2^n\right)  \lambda_n\overline{y_x^n}\, dx\right\}+2\alpha_2 \lambda^{-\ell}_n\Re\left\{ \int_{L_0}^L   (h_2^n)_x\left(\left(x-L\right) \overline{y_x^n}\right)_x\, dx\right\}

\nline

+2\alpha_2\lambda^{1-\ell}_n\Re\left\{i \int_{L_0}^L (g_2^n)_x\left(\left(x-L\right) \overline{y_x^n}\right)_x\, dx\right\}.

\end{array}
\end{equation}
Adding \eqref{E:=(4.61)} and \eqref{E:=(4.62)}, then using \eqref{E:=(3.9)}, \eqref{E:=(4.11)}, and the fact that $u^n(L_0)=y^n(L_0),\,  u_x^n(L_0)=y_x^n(L_0),$ we get
\begin{equation*}
\begin{array}{lllll}

  \lambda^2_n \int_0^{L_0}\left(-\alpha_1\left|u_x^n\right|^2+\rho_1 \left|u^n\right|^2       \right)  dx+ 3\beta_1 \int_0^{L_0} \left|u_{xx}^n\right|^2  dx+ \lambda^2_n\int_{L_0}^L\left( -\alpha_2\left|y_x^n\right|^2 + \rho_2  \left|y^n\right|^2   \right)  dx

  \nline

  +3\beta_2 \int_{L_0}^L  \left|y_{xx}^n\right|^2 dx+\lambda^2_n \left(L-L_0\right)\left(\alpha_1-\alpha_2\right)\left|u_x^n(L_0)\right|^2 +\lambda^2_n \left(L-L_0\right)\left(\rho_1-\rho_2\right)\left|u^n(L_0)\right|^2

 \nline

 =\frac{\beta_1\left(\beta_1-\beta_2\right)}{\beta_2}\left(L-L_0\right) \left|u_{xx}^n(L_0)\right|^2 +\beta_1\, L \left|u_{xx}^n(0)\right|^2

\nline

+2\rho_1\lambda^{-\ell}_n\Re\left\{\int_0^{L_0}\left(x-L\right) \left(\lambda^{-1}_n h_1^n+i g_1^n\right)  \lambda_n\overline{u_x^n}\, dx\right\}

\end{array}
\end{equation*}
\begin{equation}\label{E:=(4.63)}
\begin{array}{lllll}

+2\rho_2\lambda^{-\ell}_n\Re\left\{\int_{L_0}^L\left(x-L\right)\left(\lambda^{-1}_n h_2^n+i g_2^n\right)  \lambda_n\overline{y_x^n}\, dx\right\}

\nline

+2\alpha_1\lambda^{1-\ell}_n\Re\left\{i \int_0^{L_0} (g_1^n)_x\left(\left(x-L\right) \overline{u_x^n}\right)_x\, dx\right\}+2\alpha_2\lambda^{1-\ell}_n\Re\left\{i \int_{L_0}^L (g_2^n)_x\left(\left(x-L\right) \overline{y_x^n}\right)_x\, dx\right\}

\nline

+2\alpha_1 \lambda^{-\ell}_n\Re\left\{ \int_0^{L_0}   (h_1^n)_x\left(\left(x-L\right)\overline{u_x^n}\right)_x\, dx\right\}+2\alpha_2 \lambda^{-\ell}_n\Re\left\{ \int_{L_0}^L   (h_2^n)_x\left(\left(x-L\right) \overline{y_x^n}\right)_x\, dx\right\}

\nline

+2\gamma \Re\left\{\int_0^{L_0} \theta_{x}^n\left( \overline{u_x^n}+\left(x-L\right) \overline{u_{xx}^n}\right)\, dx\right\}.

\end{array}
\end{equation}
From \eqref{E:=(4.22)}, it holds that
\begin{equation}\label{E:=(4.64)}
\frac{\beta_1\left(\beta_1-\beta_2\right)}{\beta_2}\left(L-L_0\right) \left|u_{xx}^n(L_0)\right|^2 +\beta_1\, L \left|u_{xx}^n(0)\right|^2=o\left(\lambda^{1-\ell}_n\right).
\end{equation}
Using \eqref{E:=(4.17)}, the fact that  $h^n=\left(h_1^n,h_2^n,h_3^n\right)\to 0$  in $\mathbb{V}_2$ and $g^n=\left(g_1^n,g_2^n\right)\to 0$ in $\mathbb{W}_1$, we derive
\begin{equation}\label{E:=(4.65)}
\begin{array}{lllll}

+2\rho_1\lambda^{-\ell}_n\Re\left\{\int_0^{L_0}\left(x-L\right) \left(\lambda^{-1}_n h_1^n+i g_1^n\right)  \lambda_n\overline{u_x^n}\, dx\right\}+2\alpha_1\lambda^{1-\ell}_n\Re\left\{i \int_0^{L_0} (g_1^n)_x\left(\left(x-L\right) \overline{u_x^n}\right)_x\, dx\right\}

\nline

+2\rho_2\lambda^{-\ell}_n\Re\left\{\int_{L_0}^L\left(x-L\right)\left(\lambda^{-1}_n h_2^n+i g_2^n\right)  \lambda_n\overline{y_x^n}\, dx\right\}
+2\alpha_2\lambda^{1-\ell}_n\Re\left\{i \int_{L_0}^L (g_2^n)_x\left(\left(x-L\right) \overline{y_x^n}\right)_x\, dx\right\}

\nline

+2\alpha_1 \lambda^{-\ell}_n\Re\left\{ \int_0^{L_0}   (h_1^n)_x\left(\left(x-L\right)\overline{u_x^n}\right)_x\, dx\right\}+2\alpha_2 \lambda^{-\ell}_n\Re\left\{ \int_{L_0}^L   (h_2^n)_x\left(\left(x-L\right) \overline{y_x^n}\right)_x\, dx\right\}=o\left(\lambda^{1-\ell}_n\right).

\end{array}
\end{equation}
On the other hand, using Cauchy–Schwarz inequality, \eqref{E:=(4.18)},  \eqref{E:=(4.21)} and \eqref{E:=(4.53)}, we obtain
\begin{equation}\label{E:=(4.66)}
2\gamma \Re\left\{\int_0^{L_0} \theta_{x}^n\left( \overline{u_x^n}+\left(x-L\right) \overline{u_{xx}^n}\right)\, dx\right\}=o\left(\lambda^{-\ell}_n\right).
\end{equation}
Substituting \eqref{E:=(4.64)}-\eqref{E:=(4.66)} in \eqref{E:=(4.63)}, we get \eqref{E:=(4.60)}.\\[0.1in]
\textbf{Step 4.}  In this step, we will prove \eqref{EQ=(4.51)}-\eqref{E:=(4.52)}. First, adding \eqref{E:=(4.54)} and \eqref{E:=(4.60)}, we find that
\begin{equation}\label{E:=(4.67)}
\begin{array}{lllll}

  2\rho_1\lambda^2_n \int_0^{L_0} \left|u^n\right|^2        dx+ 2\beta_1 \int_0^{L_0} \left|u_{xx}^n\right|^2  dx+ 2\rho_2\lambda^2_n\int_{L_0}^L   \left|y^n\right|^2   dx+2\beta_2 \int_{L_0}^L  \left|y_{xx}^n\right|^2 dx

\nline

+\lambda^2_n \left(L-L_0\right)\left[\left(\alpha_1-\alpha_2\right)\left|u_x^n(L_0)\right|^2 +\left(\rho_1-\rho_2\right)\left|u^n(L_0)\right|^2\right] =o\left(\lambda^{1-\ell}_n\right).

\end{array}
\end{equation}
We distinguish two cases:\\[0.1in]
\textbf{Case 1.} If $\alpha_1\geq \alpha_2$ and $\rho_1\geq \rho_2$, then  substituting \eqref{E:=(4.21)} in \eqref{E:=(4.67)}, one derives
\begin{equation}\label{E:=(4.68)}
\left\{
\begin{array}{lllll}

  \lambda^2_n \int_0^{L_0} \left|u^n\right|^2dx=o\left(\lambda^{1-\ell}_n\right),\quad  \lambda^2_n\int_{L_0}^L   \left|y^n\right|^2   dx=o\left(\lambda^{1-\ell}_n\right),

\nline

\int_{L_0}^L  \left|y_{xx}^n\right|^2 dx=o\left(\lambda^{1-\ell}_n\right), \quad \lambda^2_n\left|u_x^n(L_0)\right|^2 =o\left(\lambda^{1-\ell}_n\right),\quad \lambda^2_n\left|u^n(L_0)\right|^2 =o\left(\lambda^{1-\ell}_n\right).

\end{array}
\right.
\end{equation}
Substituting \eqref{E:=(4.21)} and \eqref{E:=(4.68)} in \eqref{E:=(4.54)}, we get
\begin{equation*}
 \alpha_1\lambda^2_n \int_0^{L_0} \left|u_{x}^n\right|^2  dx +
 \alpha_2\lambda^2_n \int_{L_0}^{L} \left|y_{x}^n\right|^2  dx =o\left(\lambda^{1-\ell}_n\right).
\end{equation*}
Consequently, we obtain
\begin{equation}\label{E:=(4.69)}
\lambda^2_n \int_0^{L_0} \left|u_{x}^n\right|^2  dx=o\left(\lambda^{1-\ell}_n\right)\quad \text{and}\quad
 \lambda^2_n \int_{L_0}^{L} \left|y_{x}^n\right|^2  dx =o\left(\lambda^{1-\ell}_n\right).
\end{equation}
Thus\textcolor{red}{,} from \eqref{E:=(4.13)}, \eqref{E:=(4.68)}, \eqref{E:=(4.69)}, and the fact that $g=\left(g_1,g_2\right)\to 0$ in $\mathbb{W}_1$, we get \eqref{EQ=(4.51)}-\eqref{E:=(4.52)}.\\[0.1in]
\textbf{Case 2.} If $\alpha_1< \alpha_2$ or $\rho_1< \rho_2$, then substituting \eqref{E:=(4.21)} and \eqref{E:=(4.53)} in \eqref{E:=(4.67)}, we infer
\begin{equation*}
\lambda^2_n\int_{L_0}^L   \left|y^n\right|^2   dx+2\beta_2 \int_{L_0}^L  \left|y_{xx}^n\right|^2 dx=o\left(\lambda^{2-\ell}_n\right).
\end{equation*}
Consequently, we obtain
\begin{equation}\label{E:=(4.70)}
\lambda^2_n\int_{L_0}^L   \left|y^n\right|^2   dx=o\left(\lambda^{2-\ell}_n\right)\quad\text{and}\quad \int_{L_0}^L  \left|y_{xx}^n\right|^2 dx=o\left(\lambda^{2-\ell}_n\right).
\end{equation}
Inserting \eqref{E:=(4.21)}, \eqref{E:=(4.70)}, and \eqref{E:=(4.53)} in \eqref{E:=(4.54)}, we get
\begin{equation}\label{E:=(4.71)}
\lambda^2_n\int_{L_0}^L   \left|y_x^n\right|^2   dx=o\left(\lambda^{2-\ell}_n\right).
\end{equation}
Thus from \eqref{E:=(4.13)}, \eqref{E:=(4.53)}, \eqref{E:=(4.70)},  \eqref{E:=(4.71)}, and the fact that $g^n=\left(g_1^n,g_2^n\right)\to 0$ in $\mathbb{W}_1$, we get \eqref{EQ=(4.51)}-\eqref{E:=(4.52)}. The proof is thus complete.
\end{proof}
\noindent \textbf{Proof of Theorem \ref{T:=4.1}.} When $\alpha_1\geq \alpha_2$ and $\rho_1\geq \rho_2$, we choose $\ell=1$, then from Lemmas \ref{L:=4.4},   \ref{L:=4.5}, and \ref{L:=4.11}, we get $\left\|\Phi^n\right\|_{{\mathcal{H}}} = o(1)$ which contradicts \eqref{E:=(4.4)}.  This implies that
\begin{equation*}
\sup_{\lambda\in\mathbb{R}}\frac{1}{|\lambda|}\left\|\left(i\lambda I-\mathcal{A}\right)^{-1}\right\|_{\mathcal{L}\left(\mathcal{H}\right)}<+\infty.
\end{equation*}
The result follows from  Theorem \ref{T:=A.4}.  \xqed{$\square$}\\[0.1in]
\noindent \textbf{Proof of Theorem \ref{T:=4.2}.}  When $\alpha_1< \alpha_2$ or $\rho_1< \rho_2$, we choose $\ell=2$, then  from Lemmas \ref{L:=4.4},   \ref{L:=4.5}, and \ref{L:=4.11}, we get $\left\|\Phi^n\right\|_{{\mathcal{H}}} = o(1)$ which contradicts \eqref{E:=(4.4)}. Hence
\begin{equation*}
\sup_{\lambda\in\mathbb{R}}\frac{1}{\lambda^2}\left\|\left(i\lambda I-\mathcal{A}\right)^{-1}\right\|_{\mathcal{L}\left(\mathcal{H}\right)}<+\infty.
\end{equation*}
The result follows from  Theorem \ref{T:=A.4}.  \xqed{$\square$}
\section*{Conclusion and open problems}
\subsection{Conclusion} In this paper, we investigate the stability of a transmission Rayleigh beam with heat conduction.  A polynomial energy decay rate has been obtained which  depends on the physical constant. We obtain the following result:\\
 $\bullet$ A polynomial energy decay rate of type $t^{-2}$ if $\rho_1\geq \rho_2$ and $\alpha_1\geq \alpha_2$. \\
 $\bullet$ A polynomial energy decay rate of type $t^{-1}$ if $\rho_1<\rho_2$ or $\alpha_1<\alpha_2$.
 \subsection{Open Problems} In this part, we present some open problems:
 \begin{enumerate}
 \item[${\rm \textbf{(OP1)}}$] The optimality of the polynomial decay rate of the System \eqref{E:=(1.1)}-\eqref{E:=(1.12)}. But, we conjecture that the polynomial energy decay rate obtained in Theorem \ref{T:=4.1} and Theorem \ref{T:=4.2} is optimal. The idea of the proof is to find a sequence $(\lambda_n)_n\subset \mathbb{R}^{\ast}_+$ with $|\lambda_n|\to +\infty$ and a sequence of vectors $(U_n)_n\subseteq D(\mathcal{A})$ such that $(i\lambda_n-\mathcal{A})U_n=F_n$ is bounded in $\mathcal{H}$ and
 $$
 \lim_{n\to +\infty}\lambda_n^{-2+\varepsilon}\|U_n\|_{\mathcal{H}}=\infty.
 $$
 (see for example Theorem 3.1 in \cite{MR4215149}, Theorem 5.1 in \cite{MR4258404} and \cite{MR4213670}). Depending on the boundary conditions and the transmission conditions, this approach and the construction of the vector $(U_n)$ are not feasible and the question is still an open problem.
  \item[${\rm \textbf{(OP2)}}$] What happened if we consider a heat conduction with memory, where the hereditary heat conduction is due to Coleman-Gurtin law or Gurtin-Pipkin law? (See for instance \cite{MR4500789,MR4213670,MR3263148,MR4491743})
 \end{enumerate}

\appendix
\section{Notions of stability and theorems used}\label{S:=A}
\noindent We introduce here the notions of stability that we encounter in this work.\begin{Definition}\label{D:=A.1}
Assume that $A$ is the generator of a C$_0$-semigroup of contractions $\left(e^{tA}\right)_{t\geq0}$  on a Hilbert space  $H$. The  $C_0$-semigroup $\left(e^{tA}\right)_{t\geq0}$  is said to be
\begin{enumerate}
\item[1.]  strongly stable if
\begin{equation*}
\lim_{t\to +\infty} \|e^{tA}x_0\|_{\mathcal{H}}=0, \quad\forall \ x_0\in H;
\end{equation*}
\item[3.] polynomially stable if there exist two positive constants $C$ and $\alpha$ such that
\begin{equation*}
 \|e^{tA}x_0\|_{H}\leq C t^{-\alpha}\|Ax_0\|_{H},  \quad\forall\
t>0,  \ \forall \ x_0\in D\left(A\right).
\end{equation*}
\xqed{$\square$}
\end{enumerate}
\end{Definition}
\noindent We now look  for necessary conditions to show the strong stability of the $C_0$-semigroup $\left(e^{t\mathcal{A}}\right)_{t\geq0}$. We will rely on the following result obtained by Arendt and Batty in \cite{Arendt-Batty-1988}.
\begin{Theorem}[Arendt and Batty in \cite{Arendt-Batty-1988}]\label{T:=A.2}
Assume that $A$ is the generator of a C$_0-$semigroup of contractions $\left(e^{tA}\right)_{t\geq0}$  on a Hilbert space $H$. If
 \begin{enumerate}
 \item[1.]  $A$ has no pure imaginary eigenvalues,\qquad 2.  $\sigma\left(A\right)\cap i\mathbb{R}$ is countable,
 \end{enumerate}
where $\sigma\left(A\right)$ denotes the spectrum of $A$, then the $C_0$-semigroup $\left(e^{t\mathcal{A}}\right)_{t\geq0}$  is strongly stable.\xqed{$\square$}
\end{Theorem}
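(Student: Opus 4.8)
The statement is the classical Arendt--Batty--Lyubich--V\~u stability criterion, so the plan is to reproduce the standard proof, whose two pillars are the boundedness of $\left(e^{tA}\right)_{t\ge 0}$ (here it is even a contraction semigroup) and a Tauberian argument converting the countability of $\sigma(A)\cap i\mathbb R$, together with the absence of purely imaginary eigenvalues, into strong decay of every orbit. First I would record the reductions. Since the semigroup is a contraction, each orbit $u_{x_0}(t):=e^{tA}x_0$ is bounded and uniformly continuous on $\mathbb R_+$ (because $\|u_{x_0}(t+h)-u_{x_0}(t)\|\le\|e^{hA}x_0-x_0\|$ uniformly in $t$), and it suffices to prove $u_{x_0}(t)\to 0$ for every $x_0\in H$. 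One also notes that for a contraction semigroup the hypothesis ``$A$ has no purely imaginary eigenvalue'' already forces $\sigma_p(A^\ast)\cap i\mathbb R=\emptyset$, so the version of the theorem phrased with the adjoint point spectrum is in force.

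The heart of the argument is a spectral/Tauberian analysis of the orbit. For $\mathrm{Re}\,\lambda>0$ one has the Laplace representation $R(\lambda,A)x_0=\int_0^\infty e^{-\lambda t}u_{x_0}(t)\,dt$, and the half-line spectrum of $u_{x_0}$ --- the set of $s\in\mathbb R$ across which $\lambda\mapsto R(\lambda,A)x_0$ admits no continuous (a fortiori no holomorphic) extension --- is contained in $\{\,s\in\mathbb R : is\in\sigma(A)\,\}$, hence is countable. I would then invoke the Tauberian decomposition theorem of Arendt--Batty and Lyubich--V\~u for bounded uniformly continuous functions with countable half-line spectrum: such a function decomposes as $u_{x_0}=u_{\mathrm{ap}}+u_0$ with $u_0(t)\to 0$ and $u_{\mathrm{ap}}$ almost periodic (so that $u_{x_0}$ is asymptotically almost periodic), the frequencies of $u_{\mathrm{ap}}$ lying in the half-line spectrum. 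It remains to show $u_{\mathrm{ap}}\equiv 0$: if $s_0$ were a frequency of $u_{\mathrm{ap}}$, the Cesàro mean $\lim_{T\to\infty}\tfrac1T\int_0^T e^{-is_0 t}u_{x_0}(t)\,dt$ would be a nonzero vector lying in $\ker(is_0-A)$ (this is where one uses that $R(\lambda,A)$ has at most a first-order pole at $is_0$ and computes the residue), contradicting the no-imaginary-eigenvalue hypothesis. Hence $u_{x_0}=u_0\to 0$.

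It remains to supply the Tauberian decomposition theorem used above, and this is really the technical core. Its proof proceeds by transfinite induction on the Cantor--Bendixson rank of the closed, countable, hence scattered half-line spectrum $E$: the base case $E=\emptyset$ is a quantitative Ingham/Newman-type contour estimate showing that if $\widehat{u}$ extends continuously to the whole imaginary axis then $u(t)\to 0$; the inductive step peels off an isolated point $is_0$ of $iE$ by means of the associated Riesz spectral projection --- equivalently, at the level of functions, by subtracting the corresponding almost-periodic component --- which strictly lowers the Cantor--Bendixson rank. Assembling these pieces yields $\lim_{t\to\infty}\|e^{tA}x_0\|_H=0$ for every $x_0\in H$, i.e. strong stability. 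The main obstacle is precisely this last ingredient --- the Tauberian theorem for countable spectrum, in particular its base case (a nontrivial complex-analytic Tauberian estimate) and the set-theoretic Cantor--Bendixson induction that organizes the general countable case; a secondary subtlety is the residue computation identifying Cesàro means with imaginary eigenvectors, which must be handled with care when $R(\lambda,A)$ is only known to be meromorphic near the imaginary axis.
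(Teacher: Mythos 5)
The paper does not prove this statement at all: it is quoted verbatim from the literature with a citation to Arendt--Batty, and the $\square$ at the end of the theorem signals that no proof is supplied. So there is no in-paper argument to compare yours against; what can be assessed is whether your outline is a faithful reconstruction of the known proof, and it is. You correctly identify the three pillars of the Arendt--Batty--Lyubich--V\~u argument: (i) boundedness and uniform continuity of each orbit $t\mapsto e^{tA}x_0$ coming from contractivity; (ii) the inclusion of the half-line (Laplace) spectrum of the orbit in $\{s: is\in\sigma(A)\}$, hence its countability; (iii) the Tauberian decomposition into an almost periodic part plus a part tending to zero, with the almost periodic part killed by the absence of imaginary eigenvalues via the Ces\`aro-mean/residue computation. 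Your remark that on a Hilbert space the contraction hypothesis lets one pass from ``no imaginary eigenvalues of $A$'' to ``$\sigma_p(A^\ast)\cap i\mathbb{R}=\emptyset$'' is correct but asserted rather than proved; the standard justification is that if $(e^{tA})^\ast\phi=e^{-ist}\phi$ with $\|\phi\|=1$ then equality in Cauchy--Schwarz for $\langle e^{tA}\phi,\phi\rangle$ forces $e^{tA}\phi=e^{ist}\phi$, so $is\in\sigma_p(A)$. The genuinely hard content --- the Ingham-type contour estimate in the base case and the Cantor--Bendixson transfinite induction over the scattered spectrum --- is named but not carried out, which you acknowledge; as a proof \emph{sketch} this is acceptable and accurately locates where the difficulty lies, but it is not a self-contained proof, and neither the paper nor your proposal supplies one (both ultimately rest on the cited source).
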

 \begin{Corollary}\label{C:=A.3}
If the resolvent $(I-A)^{-1}$ of $A$ is compact, then the spectrum of $A$ only consists of eigenvalues of $A$ (see Theorem 6.29 in \cite{Kato-1995}). Thus, the state of Theorem \ref{T:=A.2} lessens to $A$ has no pure imaginary eigenvalues.\xqed{$\square$}
\end{Corollary}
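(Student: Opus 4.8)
The plan is to deduce both assertions from the Riesz--Schauder spectral theory of operators with compact resolvent, exactly as recorded in Theorem 6.29 of \cite{Kato-1995}. First I would note that the hypothesis that $(I-A)^{-1}$ is compact means precisely that $1\in\rho(A)$ and that $A$ has compact resolvent (if one resolvent operator is compact then, by the resolvent identity, every resolvent operator is compact). Writing $R:=(I-A)^{-1}$, which is a compact operator on $H$, the classical spectral theorem for compact operators gives that $\sigma(R)$ is an at most countable set whose only possible accumulation point is $0$, and that every $\mu\in\sigma(R)\setminus\{0\}$ is an eigenvalue of $R$ of finite algebraic multiplicity; moreover $R$ is injective, so $0$ is not an eigenvalue of $R$.

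Next I would transfer this description to $A$ via the elementary correspondence between the spectra of $A$ and of $R$: for $\lambda\neq 1$ one has $\lambda\in\sigma(A)$ if and only if $(1-\lambda)^{-1}\in\sigma(R)$, and $Ax=\lambda x$ holds if and only if $Rx=(1-\lambda)^{-1}x$. Since every nonzero spectral value of $R$ is an eigenvalue, pulling this back through the homeomorphism $\mu\mapsto 1-\mu^{-1}$ of $\mathbb{C}\setminus\{0\}$ onto $\mathbb{C}\setminus\{1\}$ shows that every $\lambda\in\sigma(A)$ is an eigenvalue of $A$, of finite multiplicity, and that $\sigma(A)$ has no finite accumulation point (the unique accumulation point $0$ of $\sigma(R)$ corresponding to ``$\lambda=\infty$''). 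This proves the first assertion: $\sigma(A)$ consists solely of isolated eigenvalues of finite multiplicity.

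Finally, for the statement concerning Theorem \ref{T:=A.2}: since $\sigma(A)$ is a discrete subset of $\mathbb{C}$, its intersection with the line $i\mathbb{R}$ is at most countable, so hypothesis 2 of Theorem \ref{T:=A.2} holds automatically. Furthermore, by the first part every point of $\sigma(A)\cap i\mathbb{R}$ is an eigenvalue of $A$, so the assumption that $A$ has no pure imaginary eigenvalues already forces $\sigma(A)\cap i\mathbb{R}=\emptyset$, i.e.\ $i\mathbb{R}\subset\rho(A)$; hence the conclusion of Theorem \ref{T:=A.2} applies. Consequently, in this setting only hypothesis 1 of Theorem \ref{T:=A.2} remains to be checked. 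There is no genuine analytic obstacle here: the argument is entirely bookkeeping around the compact-operator spectral theorem, and the only point deserving a line of care is keeping track of the excluded value $0\in\sigma(R)$ (equivalently $\lambda=\infty$) and of the injectivity of $R$ when carrying the word ``eigenvalue'' from $R$ back to $A$.
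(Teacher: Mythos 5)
Your argument is correct and is essentially the paper's own approach: the paper simply invokes Theorem 6.29 of \cite{Kato-1995} without further detail, and your proposal just unpacks that citation via the standard Riesz--Schauder theory for the compact operator $(I-A)^{-1}$ and the spectral correspondence $\lambda\in\sigma(A)\Leftrightarrow(1-\lambda)^{-1}\in\sigma\bigl((I-A)^{-1}\bigr)$. The bookkeeping (injectivity of the resolvent, discreteness of $\sigma(A)$ giving countability of $\sigma(A)\cap i\mathbb{R}$ automatically) is handled correctly.
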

\noindent For necessary conditions to show the polynomial stability of the $C_0$-semigroup $\left(e^{tA}\right)_{t\geq0}$\textcolor{red}{,} we will rely on the frequency domain approach method  that has been obtained by Batty in \cite{Batty-Duyckaerts-2008}, Borichev and  Tomilov in \cite{Borichev-Tomilov-2010},  Liu and  Rao in \cite{Liu-Rao-2005}.
\begin{Theorem}[Batty in \cite{Batty-Duyckaerts-2008}, Borichev and  Tomilov in \cite{Borichev-Tomilov-2010},  Liu and  Rao in \cite{Liu-Rao-2005}]\label{T:=A.4}
Assume that $A$ is the generator of a strongly continuous semigroup of contractions $\left(e^{tA}\right)_{t\geq0}$  on $H$.   If   $\sigma\left(A\right)\cap\ i\mathbb{R}=\emptyset$, then for a fixed $\ell>0$ the following conditions are equivalent
\begin{enumerate}
\item[1.] $\sup_{\lambda\in\mathbb{R}}\frac{1}{|\lambda|^{\ell}}\left\|\left(i\lambda I-A\right)^{-1}\right\|_{\mathcal{L}\left(H\right)}<+\infty,$
\item[2.] $\displaystyle{\|e^{tA}x_0\|_{H} \leq \dfrac{C}{t^{{\frac{1}{\ell}}}} \ \|x_0\|_{D\left(\mathcal{A}\right)} \quad\forall\ t>0,\ x_0\in D\left(A\right)},\ $ for some $C>0.$\xqed{$\square$}
\end{enumerate}
\end{Theorem}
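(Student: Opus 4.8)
The statement is the standard frequency-domain characterisation of polynomial decay, due in parts to Liu--Rao and Batty--Duyckaerts and, in the sharp Hilbert-space form, to Borichev--Tomilov, so the plan is to reconstruct that argument, handling the two implications separately; in the applications of the present paper only $(1)\Rightarrow(2)$ is invoked, and it is the substantial one. Two preliminary remarks are used throughout. Since $\sigma(A)\cap i\mathbb{R}=\emptyset$ and the resolvent is continuous on compact subsets of $\rho(A)$, the norm $\|(i\lambda I-A)^{-1}\|$ stays bounded for $\lambda$ near $0$; hence (1) is equivalent to $\|(i\lambda I-A)^{-1}\|\le M(1+|\lambda|)^{\ell}$ for all $\lambda\in\mathbb{R}$, and $0\in\rho(A)$, $A^{-1}\in\mathcal{L}(H)$. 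Moreover, as is readily checked, condition (2) is equivalent to $\|e^{tA}A^{-1}\|_{\mathcal{L}(H)}\le Ct^{-1/\ell}$, so it suffices to work with the operator $e^{tA}A^{-1}$.

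For $(2)\Rightarrow(1)$, the less delicate direction, I would start from the Laplace representation $(i\lambda I-A)^{-1}=\int_{0}^{\infty}e^{-i\lambda t}e^{tA}\,dt$ on $i\mathbb{R}$, which is legitimate because $i\mathbb{R}\subset\rho(A)$ and $(e^{tA})$ is a contraction semigroup. From $e^{tA}A^{-n}=\big(e^{(t/n)A}A^{-1}\big)^{n}$ and the hypothesis one obtains $\|e^{tA}A^{-n}\|\le C_{n}\,t^{-n/\ell}$, so for an integer $n>\ell$ the map $t\mapsto e^{tA}A^{-n}$ is absolutely integrable and $(i\lambda I-A)^{-1}A^{-n}$ is uniformly bounded on $\mathbb{R}$. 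The resolvent expansion $(i\lambda I-A)^{-1}=(i\lambda)^{n}(i\lambda I-A)^{-1}A^{-n}-\sum_{k=1}^{n}(i\lambda)^{k-1}A^{-k}$ then gives a crude bound of order $|\lambda|^{n}$, and the exponent is brought down to $\ell$ by a Phragm\'{e}n--Lindel\"{o}f (three-lines) interpolation in the fractional power of $-A$, between the uniformly bounded top power and the zeroth power estimated off the axis.

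For $(1)\Rightarrow(2)$ I would follow the Hilbert-space argument of Borichev--Tomilov. Fix an integer $n$ with $n-\ell>\tfrac{1}{2}$. Iterating the resolvent identity $(i\lambda I-A)^{-1}A^{-1}=\tfrac{1}{i\lambda}\big((i\lambda I-A)^{-1}+A^{-1}\big)$ and inserting the hypothesis (1) gives $\|(i\lambda I-A)^{-1}A^{-n}\|\le C(1+|\lambda|)^{\ell-n}$, which is square-integrable on $\mathbb{R}$. Since, for $x\in H$, the map $\lambda\mapsto(i\lambda I-A)^{-1}A^{-n}x$ is the Fourier transform of $t\mapsto e^{tA}A^{-n}x$ (via the Laplace representation), Plancherel's theorem yields $\int_{0}^{\infty}\|e^{tA}A^{-n}x\|^{2}\,dt\le C\|x\|^{2}$. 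It then remains to convert this square-integrability in time of the high power $e^{tA}A^{-n}$ into the \emph{pointwise} bound $\|e^{tA}A^{-1}x\|\le Ct^{-1/\ell}\|x\|$; concretely one combines the $L^{2}$ bound with the semigroup law and the elementary estimates $\|e^{sA}A^{-j}\|\le C_{j}$, $1\le j\le n$, tracking how decay in $t$ trades against negative powers of $A$.

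The main obstacle is precisely this last conversion. The trade-off has to be carried out so as to extract exactly the exponent $1/\ell$ — not $1/\ell-\varepsilon$, and not with a logarithmic loss, which is all that a purely complex-analytic argument (extending the resolvent off $i\mathbb{R}$ by a Neumann series into the region $\{z:\ |\Re z|\le\tfrac{1}{2M}(1+|\Im z|)^{-\ell}\}$ and then shifting the Dunford contour $e^{tA}x=\tfrac{1}{2\pi i}\int e^{zt}(zI-A)^{-1}x\,dz$) delivers on a general Banach space, as in Batty--Duyckaerts. Removing that loss genuinely uses the Hilbert-space structure through Plancherel, and the quantitative interpolation lemma of Borichev--Tomilov that does it is where the delicate bookkeeping, and hence the bulk of the work, lies. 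Once $\|e^{tA}A^{-1}\|\le Ct^{-1/\ell}$ is established, condition (2) follows at once, and the equivalence is complete.
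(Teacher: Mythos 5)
The paper does not prove this statement at all: Theorem \ref{T:=A.4} is imported from the literature (Batty--Duyckaerts, Borichev--Tomilov, Liu--Rao) and used as a black box, so there is no in-paper argument to compare yours against. Judged on its own terms, your sketch correctly identifies the standard architecture: reduce condition (2) to a bound on $\|e^{tA}A^{-1}\|_{\mathcal{L}(H)}$, prove $(2)\Rightarrow(1)$ by Laplace-transforming the integrable orbit $t\mapsto e^{tA}A^{-n}$ and interpolating down from the crude $O(|\lambda|^{n})$ bound, and prove $(1)\Rightarrow(2)$ by combining the iterated resolvent identity, Plancherel, and a conversion from time-integrated to pointwise decay.

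However, the decisive step in $(1)\Rightarrow(2)$ is exactly the one you leave open, and the elementary devices you gesture at cannot close it. From $\int_{0}^{\infty}\|e^{tA}A^{-n}x\|^{2}\,dt\le C\|x\|^{2}$ and the monotonicity of $t\mapsto\|e^{tA}y\|$ for a contraction semigroup one obtains only $\|e^{tA}A^{-n}\|\le Ct^{-1/2}$, and the moment inequality then yields $\|e^{tA}A^{-1}\|\le Ct^{-1/(2n)}$ with $n>\ell+\tfrac{1}{2}$, which is strictly weaker than the asserted $t^{-1/\ell}$. So the ``trade-off between decay in $t$ and negative powers of $A$'' does not by itself deliver the sharp exponent; recovering $1/\ell$ requires the genuinely nontrivial iteration (or functional-calculus) lemma of Borichev--Tomilov, which your proposal names as the main obstacle but does not supply. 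A smaller point: the identity $(i\lambda I-A)^{-1}=\int_{0}^{\infty}e^{-i\lambda t}e^{tA}\,dt$ is not valid on the imaginary axis for a general contraction semigroup, since the integral need not converge there; your actual computations only use the regularized version with $A^{-n}$ inserted, so this is repairable, but the claim as written is false.
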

\noindent We will recall two forms of Gagliardo-Nirenberg inequality (see \cite{Liu-Zheng-1999}) which will be used in this work.
\begin{Theorem}\label{T:=A.5}
$\\ \vspace{-0.5cm}$
\begin{enumerate}
\item[1.]  There are two positive constants $K_6$ and $K_7$ such that, for any $\zeta\in H^2(0,L_0)$, we have
\begin{equation}\label{E:=(A.1)}
\left\|\zeta_x\right\|_{L^2(0,L_0)}\leq K_6 \left\|\zeta_{xx}\right\|^{\frac{1}{2}}_{L^2(0,L_0)} \left\|\zeta\right\|^{\frac{1}{2}}_{L^2(0,L_0)} +K_7 \left\|\zeta\right\|_{L^2(0,L_0)}.
\end{equation}
\item[2.] There are two positive constants $K_8$ and $K_9$ such that, for any $\psi\in H^1(0,L_0)$, we have
\begin{equation}\label{E:=(A.2)}
\left|\psi\right|_{\infty}\leq K_8 \left\|\psi_{x}\right\|^{\frac{1}{2}}_{L^2(0,L_0)} \left\|\psi\right\|^{\frac{1}{2}}_{L^2(0,L_0)} +K_9 \left\|\psi\right\|_{L^2(0,L_0)}.
\end{equation}
\xqed{$\square$}
\end{enumerate}
\end{Theorem}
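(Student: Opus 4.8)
The plan is to recognize both estimates as the one–dimensional Gagliardo--Nirenberg interpolation inequalities on the bounded interval $(0,L_0)$: I would first prove the $L^{\infty}$–bound \eqref{E:=(A.2)} by a direct elementary argument, and then derive the $H^{1}$–interpolation \eqref{E:=(A.1)} from it together with a single integration by parts. For \eqref{E:=(A.2)}, since $H^{1}(0,L_0)\hookrightarrow C([0,L_0])$, I fix $x\in[0,L_0]$ and, for every $y\in(0,L_0)$, write $|\psi(x)|^{2}=|\psi(y)|^{2}+2\Re\int_{y}^{x}\psi\,\overline{\psi_{s}}\,ds$. Integrating this identity in $y$ over $(0,L_0)$, dividing by $L_0$, and using $\bigl|\int_{y}^{x}\psi\,\overline{\psi_{s}}\,ds\bigr|\le\|\psi\|_{L^{2}(0,L_0)}\|\psi_{x}\|_{L^{2}(0,L_0)}$ (Cauchy--Schwarz, uniformly in $x,y$) gives $|\psi(x)|^{2}\le L_0^{-1}\|\psi\|_{L^{2}(0,L_0)}^{2}+2\|\psi\|_{L^{2}(0,L_0)}\|\psi_{x}\|_{L^{2}(0,L_0)}$. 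Taking the supremum over $x$, then the square root, and using $\sqrt{a+b}\le\sqrt a+\sqrt b$ yields \eqref{E:=(A.2)} with $K_{8}=\sqrt2$, $K_{9}=L_0^{-1/2}$.

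For \eqref{E:=(A.1)}, I would integrate by parts: for $\zeta\in H^{2}(0,L_0)$, $\|\zeta_{x}\|_{L^{2}(0,L_0)}^{2}=\Re\bigl[\zeta\,\overline{\zeta_{x}}\bigr]_{0}^{L_0}-\Re\int_{0}^{L_0}\zeta\,\overline{\zeta_{xx}}\,dx$, so that $\|\zeta_{x}\|_{L^{2}(0,L_0)}^{2}\le 2\,|\zeta|_{\infty}\,|\zeta_{x}|_{\infty}+\|\zeta_{xx}\|_{L^{2}(0,L_0)}\|\zeta\|_{L^{2}(0,L_0)}$. Then I apply \eqref{E:=(A.2)} to $\zeta$, which bounds $|\zeta|_{\infty}$ by $\|\zeta_{x}\|_{L^{2}}$ and $\|\zeta\|_{L^{2}}$, and \eqref{E:=(A.2)} to $\zeta_{x}\in H^{1}(0,L_0)$, which bounds $|\zeta_{x}|_{\infty}$ by $\|\zeta_{xx}\|_{L^{2}}$ and $\|\zeta_{x}\|_{L^{2}}$. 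Expanding the product $|\zeta|_{\infty}|\zeta_{x}|_{\infty}$ and absorbing into the left–hand side, via Young's inequality with small parameters, every term that carries a positive power of $\|\zeta_{x}\|_{L^{2}}$, I obtain $\|\zeta_{x}\|_{L^{2}(0,L_0)}^{2}\le C\bigl(\|\zeta_{xx}\|_{L^{2}(0,L_0)}\|\zeta\|_{L^{2}(0,L_0)}+\|\zeta\|_{L^{2}(0,L_0)}^{2}\bigr)$ for some $C=C(L_0)>0$; one more application of $\sqrt{a+b}\le\sqrt a+\sqrt b$ gives \eqref{E:=(A.1)} with $K_{6}=K_{7}=\sqrt C$.

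The only delicate step is the Young–inequality bookkeeping in the last paragraph: after inserting \eqref{E:=(A.2)}, the cross term $|\zeta|_{\infty}|\zeta_{x}|_{\infty}$ produces contributions of the types $\|\zeta_{xx}\|^{1/2}\|\zeta_{x}\|\|\zeta\|^{1/2}$, $\|\zeta_{x}\|^{3/2}\|\zeta\|^{1/2}$, $\|\zeta_{xx}\|^{1/2}\|\zeta_{x}\|^{1/2}\|\zeta\|$ and $\|\zeta_{x}\|\|\zeta\|$ (all $L^{2}(0,L_0)$ norms), and each must be split by an appropriately weighted Young inequality so that the accumulated coefficient of $\|\zeta_{x}\|_{L^{2}(0,L_0)}^{2}$ on the right stays strictly below $1$, while checking that the residual lower–order terms (for instance $\|\zeta_{xx}\|^{2/3}\|\zeta\|^{4/3}$) are themselves dominated by $\|\zeta_{xx}\|_{L^{2}}\|\zeta\|_{L^{2}}+\|\zeta\|_{L^{2}}^{2}$. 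A shortcut that avoids this is to extend $\zeta$ to $\tilde\zeta\in H^{2}(\mathbb R)$ by a bounded extension operator with $\|\tilde\zeta\|_{L^{2}(\mathbb R)}\le C\|\zeta\|_{L^{2}(0,L_0)}$ and $\|\tilde\zeta\|_{H^{2}(\mathbb R)}\le C\|\zeta\|_{H^{2}(0,L_0)}$, use the sharp line estimate $\|\tilde\zeta_{x}\|_{L^{2}(\mathbb R)}^{2}\le\|\tilde\zeta_{xx}\|_{L^{2}(\mathbb R)}\|\tilde\zeta\|_{L^{2}(\mathbb R)}$ (immediate from Cauchy--Schwarz on the Fourier side), and then absorb the resulting $\|\zeta_{x}\|_{L^{2}}\|\zeta\|_{L^{2}}$ term by Young. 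Either way no new idea is needed: these are the classical Gagliardo--Nirenberg inequalities on an interval, cf.\ \cite{Liu-Zheng-1999}.
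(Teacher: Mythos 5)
Your proof is correct. Note, however, that the paper itself offers no argument for Theorem \ref{T:=A.5}: it simply records the two estimates as classical one-dimensional Gagliardo--Nirenberg inequalities and cites \cite{Liu-Zheng-1999}. So there is nothing in the paper to compare against step by step; what you have produced is a self-contained elementary proof of a result the authors take as known.

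On the merits of your argument: the derivation of \eqref{E:=(A.2)} from the identity $|\psi(x)|^{2}=|\psi(y)|^{2}+2\Re\int_{y}^{x}\psi\,\overline{\psi_{s}}\,ds$, averaged in $y$, is clean and gives the explicit constants $K_{8}=\sqrt{2}$, $K_{9}=L_0^{-1/2}$. For \eqref{E:=(A.1)}, the integration by parts plus absorption scheme does close: of the four cross terms generated by $|\zeta|_{\infty}|\zeta_{x}|_{\infty}$, each can be split by a weighted Young inequality so that the total coefficient of $\|\zeta_{x}\|_{L^{2}}^{2}$ stays below $1$, and the only residual term not already of the target form, $\|\zeta_{xx}\|_{L^{2}}^{2/3}\|\zeta\|_{L^{2}}^{4/3}$, is indeed dominated by $\tfrac{2}{3}\|\zeta_{xx}\|_{L^{2}}\|\zeta\|_{L^{2}}+\tfrac{1}{3}\|\zeta\|_{L^{2}}^{2}$ via Young with exponents $3/2$ and $3$. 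Your fallback through an $H^{2}$ extension operator and the Fourier-side estimate on the line is also valid and avoids the bookkeeping entirely, at the cost of invoking the extension theorem. Either route is an acceptable replacement for the paper's bare citation.
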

\noindent We will recall Young inequality and we will prove some inequalities that will be used in this work.
\begin{Lemma} \label{L:=A.6} $\\ \vspace{-0.5cm}$
\begin{enumerate}
\item[1.] For all positive numbers $p$ and $q$, the Young inequality is given by
\begin{equation}\label{E:=(A.3)}
p\, q\leq \frac{p^2}{2}+\frac{q^2}{2}.
\end{equation}
\item[2.]  For all $0<m_1<1$ and for all positive numbers $a$ and $b$,   we have
\begin{equation}\label{E:=(A.4)}
ab\leq m_1 a^2+\frac{b^2}{4m_1}.
\end{equation}
\item[3.] For all $0<m_1<1$ and for all positive numbers $a,\ b,$ and $c$,  we have
\begin{equation}\label{E:=(A.5)}
a\, b\, c\leq \left(a^2+b^4\right) m_1+\frac{c^4}{64m_1^3}.
\end{equation}
\end{enumerate}
\end{Lemma}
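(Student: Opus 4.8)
The plan is to establish the three inequalities in order, each one reducing to the completion of a square. For \eqref{E:=(A.3)} I would simply expand $(p-q)^2\ge 0$, which gives $2pq\le p^2+q^2$ and hence the claim; this step is immediate.

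For \eqref{E:=(A.4)} the idea is a rescaled version of the same trick: writing $ab=\big(\sqrt{2m_1}\,a\big)\big(\tfrac{b}{\sqrt{2m_1}}\big)$ and applying \eqref{E:=(A.3)} to this pair yields $ab\le m_1a^2+\tfrac{b^2}{4m_1}$; equivalently, one expands $\big(\sqrt{m_1}\,a-\tfrac{b}{2\sqrt{m_1}}\big)^2\ge 0$ directly. I will note in passing that this argument is valid for \emph{every} $m_1>0$, not only for $0<m_1<1$, and I will keep this parameter-free form in mind because it is what makes the third inequality go through.

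For \eqref{E:=(A.5)} the plan is to apply the generalized Young inequality twice, distributing the weight $1=\tfrac12+\tfrac14+\tfrac14$ over the quantities $a^2$, $b^4$, $c^4$. First apply the estimate behind \eqref{E:=(A.4)} to the pair $(a,\,bc)$ with parameter $m_1$ to get $abc\le m_1a^2+\tfrac{b^2c^2}{4m_1}$; then apply it again to the pair $(b^2,\,c^2)$ with parameter $4m_1^2$ to get $b^2c^2\le 4m_1^2b^4+\tfrac{c^4}{16m_1^2}$. Substituting the second estimate into the first and simplifying the constants gives $abc\le m_1a^2+m_1b^4+\tfrac{c^4}{64m_1^3}=(a^2+b^4)m_1+\tfrac{c^4}{64m_1^3}$, which is \eqref{E:=(A.5)}. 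The one point that needs care is that the auxiliary parameter $4m_1^2$ can exceed $1$, so one cannot quote \eqref{E:=(A.4)} verbatim as stated; this is exactly why the parameter-free version recorded in the previous step is used here. As an alternative route one could avoid the chaining altogether and invoke the weighted arithmetic--geometric mean inequality $x^{1/2}y^{1/4}z^{1/4}\le\tfrac12x+\tfrac14y+\tfrac14z$ with $x=\lambda^2a^2$, $y=\mu^4b^4$, $z=\nu^4c^4$ for the choice $\lambda^2=2m_1$, $\mu^4=4m_1$, $\nu=\tfrac{1}{2m_1^{3/4}}$ (so that $\lambda\mu\nu=1$), which produces the same constants.

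Since each step is a single algebraic manipulation, there is no real obstacle in this proof; the only place demanding attention is bookkeeping the powers of $m_1$ so that the final constant comes out as $\tfrac{1}{64m_1^3}$ and checking that the auxiliary parameter used in the second application of Young's inequality is admissible, as noted above.
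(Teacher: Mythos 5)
Your proposal is correct and follows essentially the same route as the paper: \eqref{E:=(A.4)} is obtained by the substitution $p=\sqrt{2m_1}\,a$, $q=b/\sqrt{2m_1}$ in \eqref{E:=(A.3)}, and \eqref{E:=(A.5)} by two successive applications of the weighted Young inequality (the paper applies \eqref{E:=(A.4)} with the same parameter $m_1$ to the pair $\bigl(b^2,\tfrac{c^2}{4m_1}\bigr)$, which sidesteps the admissibility concern you raise about the auxiliary parameter $4m_1^2$, but the computation is identical). Your observation that \eqref{E:=(A.4)} holds for every $m_1>0$ and your alternative weighted AM--GM derivation are both valid, though not needed.
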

\begin{proof} Firstly, taking $p=\sqrt{2m_1} a$ and $q=\frac{b}{\sqrt{2m_1}}$ in the young inequality \eqref{E:=(A.3)}, we get \eqref{E:=(A.4)}. Secondly,  for all $0<m_1<1$ and for all positive numbers $a,\ b,$ and $c$, using \eqref{E:=(A.4)}, we get
\begin{equation*}
a\, b\, c\leq m_1 a^2+b^2 \frac{c^2}{4m_1}\leq m_1 a^2+m_1 b^4+\frac{c^4}{64m_1^3}\leq \left(a^2+b^4\right) m_1+\frac{c^4}{64m_1^3},
\end{equation*}
hence, we get \eqref{E:=(A.5)}.
\end{proof}
\noindent We will recall the relations between $p$ norms on $\mathbb{R}^m$.
\begin{Lemma} \label{L:=A.7}
For all $a=\left(a_1,\ldots,a_m\right)\in \mathbb{R}^m$, we have
\begin{equation}\label{E:=(A.6)}
\left(\sum_{j=1}^m\left|a_j\right|\right)^2\leq m \sum_{j=1}^m\left|a_j\right|^2\quad\text{and}\quad  \left(\sum_{j=1}^m\left|a_j\right|\right)^4\leq m^3 \sum_{j=1}^m\left|a_j\right|^4.
\end{equation}
\end{Lemma}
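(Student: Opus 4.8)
The plan is to derive both inequalities from the Cauchy--Schwarz inequality in $\mathbb{R}^m$, used once directly and once in a composed form; no deeper tool is needed.

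First I would prove the left-hand inequality. Writing $\sum_{j=1}^m |a_j| = \sum_{j=1}^m 1\cdot |a_j|$ and applying Cauchy--Schwarz to the vectors $(1,\ldots,1)$ and $(|a_1|,\ldots,|a_m|)$ yields $\sum_{j=1}^m|a_j| \le \sqrt{m}\,\bigl(\sum_{j=1}^m|a_j|^2\bigr)^{1/2}$; squaring both sides gives $\bigl(\sum_{j=1}^m|a_j|\bigr)^2 \le m\sum_{j=1}^m|a_j|^2$, which is the first claim. (Equivalently, one could invoke convexity of $t\mapsto t^2$ to get $\bigl(\frac{1}{m}\sum_{j=1}^m|a_j|\bigr)^2 \le \frac{1}{m}\sum_{j=1}^m|a_j|^2$ directly; I would present the Cauchy--Schwarz version as the most self-contained.)

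Next I would obtain the fourth-power estimate by squaring the first inequality and then reapplying it to the nonnegative numbers $|a_j|^2$. Concretely, from the first claim, $\bigl(\sum_{j=1}^m|a_j|\bigr)^4 = \Bigl[\bigl(\sum_{j=1}^m|a_j|\bigr)^2\Bigr]^2 \le \Bigl[m\sum_{j=1}^m|a_j|^2\Bigr]^2 = m^2\bigl(\sum_{j=1}^m|a_j|^2\bigr)^2$. Applying the first claim once more, with $a_j$ replaced by $|a_j|^2$, gives $\bigl(\sum_{j=1}^m|a_j|^2\bigr)^2 \le m\sum_{j=1}^m|a_j|^4$. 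Combining the two displays produces $\bigl(\sum_{j=1}^m|a_j|\bigr)^4 \le m^3\sum_{j=1}^m|a_j|^4$, as required.

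Since every step is an elementary inequality, there is no genuine obstacle here; the only point demanding care is the order of the argument, namely that the second inequality must be deduced from the first one (applied twice and chained together) rather than established independently.
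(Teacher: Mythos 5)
Your proof is correct. The paper merely states this lemma as a recollection of the standard relations between $p$-norms on $\mathbb{R}^m$ and offers no proof of its own, so there is nothing to compare against; your argument --- Cauchy--Schwarz for the quadratic estimate, then chaining that estimate twice (once applied to $(|a_1|,\ldots,|a_m|)$ and once to $(|a_1|^2,\ldots,|a_m|^2)$) for the quartic one --- is the natural and complete justification.
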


\end{document}